\documentclass[12pt]{amsart}
\usepackage{amsmath,amsthm,amsfonts}
\newtheorem{thm}{Theorem}[section]
\newtheorem{cor}[thm]{Corollary}
\newtheorem{lem}[thm]{Lemma}
\newtheorem{prop}[thm]{Proposition}

\theoremstyle{definition}

\def\e{{\rm e}}

\begin{document}

\title[Spectra of averages of unitrary representations]
{Spectra of averages\\of unitrary representations of LCA groups}

\author{Guy Cohen}
\address{School of Electrical Engineering, Ben-Gurion University, Beer-Sheva, Israel}
\email{guycohen@bgu.ac.il}

\author{Michael Lin}
\address{Department of Mathematics, Ben-Gurion University, Beer-Sheva, Israel}
\email{lin@math.bgu.ac.il}

\subjclass[2010]{Primary: 22D10, 47A10, 37A30; Secondary: 37A15, 22B99, 22F10}
\keywords{LCA groups, unitary representations, representation average, spectrum,
spectral measure, Arveson spectrum, spectral mapping theorem, Ritt, group actions, ergodic}

\begin{abstract}
Let $G$ be a locally compact Abelian (LCA) group with dual group $\Gamma$,
 and let $\mu$ be a probability measure on (the Borel sets of) $G$. 
Given a unitary representation $\{U(t): t \in G\}$  in a complex Hilbert space $H$,
we study the spectrum of the $\mu$-average $V:=\int_G U(t)d\mu(t)$ (defined in the
strong topology of $H$). We prove that $\sigma(V) \subset \overline{\hat\mu(\Gamma)}$
and give a sufficieent condition for equality.

Using the spectral measure $E(\cdot)$ given by the general Stone theorem, we prove
a (weak)  spectral mapping theorem for the operators $U(\nu):=\int_GU(t)d\nu(t)$,
where $\nu$ is any bounded complex measure on $G$.

For a unitary representation of $\mathbb Z$, defined by the powers of a unitary
operator $U$, we prove that $\sigma(V)=\hat\mu(\sigma(U))$.
For a unitary representation of $\mathbb R$, given as $U(t)=\e^{itB}$ ($t\in\mathbb R$),
we show that $\sigma(V)=\overline{\hat\mu(\sigma(B))}$.
\end{abstract}

\dedicatory{\large Dedicated to the memory of Joe Rosenblatt}
\maketitle

\section{Introduction}

\subsection{Background}

A {\it general dynamical system} is an action of a locally compact (often metrizable) group 
$G$ (with unit $e$) by invertible measure preserving transformations $\{\theta(t): t \in G\}$ 
on a probability space $(\Omega,\mathcal B,m)$, such that 
$\theta(t_1t_2)=\theta(t_1)\theta(t_2)$, $\theta(e)$ is the identity, and $\theta(t)x$ is 
measurable from $G\times\Omega$ to $\Omega$.
 Such an action gives rise to a unitary representation of $G$ in $L^2(m)$, defined by 
$U(t)f =f\circ \theta(t^{-1})$. When $L^2(m)$ is separable, this measurable representation
is strongly continuous.

Given a "general dynamical system", Oseledets \cite{Os} studied the ergodic properties of 
the Markov operator $P_\mu:=\int_G U(t)d\mu(t)$, where $\mu$ is a probability measure 
on $G$; he proved  a.e. convergence of $P_\mu^n f$, $f \in L^1(m)$, when $\mu$ is
 symmetric strictly aperiodic.
Bellow, Jones and Rosenblatt \cite{BJR},\cite{BJR1} looked at actions of $\mathbb Z$
and studied a.e convergence of $P_\mu^n f$ when $\mu$ is not symmetric.

Jones, Rosenblatt and Tempelman \cite{JRT} extended some of the results of \cite{BJR1}
to actions of LCA groups. For that they proved that
 $\sigma(P_\mu) \subset \overline{\hat\mu(\Gamma)}$, where $\Gamma$ is the dual group
of $G$.
\medskip

\subsection{Notations and overview}

Throughout this work, $G$ is a locally compact Abelian (LCA) group, with fixed Haar 
measure $m_G$ and dual group $\Gamma$. We denote by $M(G)$ the space of bounded complex regular measures on  $G$, and identify the space of absolutely contiuous 
elements of $M(G)$ with $L_1(G):=L_1(G,m_G)$.  The Fourier-Stieltjes transform of 
$\nu \in M(G)$ is defined by $\hat\nu(\gamma);=\int_G \gamma(t)d\nu(t)$,
 $\gamma \in \Gamma$.

The  spectrum of an operator $T$ on a complex Banach space is denoted $\sigma(T)$, 
with $\rho(T):=\sup_{\lambda \in \sigma(T)} |\lambda|$ its spectral radius.

Given a strongly continuous unitary representation $\mathbf{U}=U(t)_{t \in G}$, 
for $\nu \in M(G)$ we define the operator $U(\nu):= \int_G U(t)d\nu(t)$ (defined at 
least weakly).  In Section \ref{powers} we study the convergence of the powers of $U(\mu)$
when $\mu$ is a probability on $G$.

The main purpose of this work, in Section \ref{spectra}, is to obtain spectral properties of
the operators $U(\nu)$. The main tool for our investigations is $E(\cdot)$, the spectral 
measure of $\mathbf{U}$ given by the general Stone theorem (e.g. \cite{Am}).

\smallskip

For weakly continuous bounded representations $\mathbf{T}$ in Banach spaces,
the modern tool for studying spectral properties of $T(\nu):=\int_G T(t)d\nu(t)$ is
the Arveson spectrum \cite{Ar1} (see Subsection \ref{Arveson}).
The point of our work is that for unitary representations in complex Hilbert spaces, 
the study can be carried out {\it using only properties of spectral measures on
 $\Gamma$  (as in \cite{Ru}) and classical Fourier analysis on $G$ (as in  \cite{Ru1})}.

Let $E(\cdot)$ be a regular spectral measure defined on $\Gamma$, with support
$\Sigma(E)$. For a bounded measurable $f$ on $\Gamma$ the operator 
$\Psi(f):=\int_\Gamma f(\gamma)E(d\gamma)$ is defined weakly. When $f$ is bounded continuous 
on $\Gamma$, we prove the (weak) spectral mapping theorem (WSMT): 
$\sigma(\Psi(f))=\overline{f(\Sigma(E))}$; we deduce from it the WSMT for  $U(\nu)$,
 for {\it every} $\nu \in M(G)$, which seems to be a new result.

We prove that $\Sigma(E)$ equals the Arveson spectrum (stated without proof in 
 \cite{St}); consequently we conclude that our WSMT for unitary representations is better 
than the general WSMT  \cite{DLZ},\cite{Ar2} for representations in Banach spaces,
which for certain representations may fail when $\nu$ is continuous singular \cite{DLZ}.
\medskip

\bigskip

\section{Convergence of the powers of representation averages} \label{powers}

When $T(t)_{t \in G}$ is a bounded continuous representation  in a reflexive Banach space
$X$ and $\mu$ is a probability on $G$, the operator $V=\int_G T(t)d\mu(t)$ is 
power-bounded, and by the mean ergodic theorem $\frac1N \sum_{k=1}^N V^k$ converges in the 
strong operator topology. In this section we look at conditions for strong convergence of 
$V^n$ when $X$ is a (complex) Hilbert space.

\subsection{Convergence in operator norm}

\begin{prop} \label{uet}
Let $T$ be a normal contraction  on a complex Hilbert space.  The averages
$\frac1n\sum_{k=1}^n T^k$ converge in operator norm if (and only if) $1$ is isolated in 
$\sigma(T)$.
\end{prop}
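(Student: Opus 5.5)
Since $T$ is normal, the spectral theorem lets us write $T=\int_{\sigma(T)} z\,dE(z)$ with $\sigma(T)$ contained in the closed unit disc. The averages then become
\[
A_n:=\frac1n\sum_{k=1}^n T^k=\int_{\sigma(T)} f_n(z)\,dE(z),\qquad f_n(z):=\frac1n\sum_{k=1}^n z^k .
\]
The functional calculus is isometric from bounded Borel functions on $\sigma(T)$ modulo $E$-null sets into $B(H)$, and for continuous functions the $E$-essential supremum equals the supremum over $\sigma(T)$, because $\Sigma(E)=\sigma(T)$. So norm convergence of $A_n$ is the same as uniform convergence of $f_n$ on $\sigma(T)$. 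The pointwise limit is $f_n(z)\to \mathbf 1_{\{1\}}(z)$ for $|z|\le1$: for $z\ne1$ we have $f_n(z)=\frac{z(1-z^n)}{n(1-z)}\to0$, and $f_n(1)=1$. The whole proof is therefore a study of uniform convergence of $f_n$ on $\sigma(T)$.

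\emph{Sufficiency.} Suppose $1$ is isolated in $\sigma(T)$, or $1\notin\sigma(T)$. Then there is $\delta>0$ with $|1-z|\ge\delta$ for all $z\in\sigma(T)\setminus\{1\}$. On this set $|f_n(z)|\le \frac{2}{n\delta}$, and $f_n(1)=1$. Hence $f_n\to\mathbf 1_{\{1\}}$ uniformly on $\sigma(T)$, so $A_n\to E(\{1\})$ in operator norm. The limit $E(\{1\})$ is the projection onto the fixed points of $T$.

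\emph{Necessity.} Suppose $A_n$ converges in norm to some $P$. I would pass to the functional calculus. Then $f_n$ is Cauchy uniformly on $\sigma(T)$, so it converges uniformly there, and the limit must be the pointwise limit $\mathbf 1_{\{1\}}$. A uniform limit of continuous functions is continuous on $\sigma(T)$. If $1\in\sigma(T)$ were not isolated, take $z_j\in\sigma(T)\setminus\{1\}$ with $z_j\to1$. Continuity would force $0=\mathbf 1_{\{1\}}(z_j)\to\mathbf 1_{\{1\}}(1)=1$, a contradiction.

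The statement says ``if and only if $1$ is isolated''. I read it as also allowing $1\notin\sigma(T)$, in which case the averages converge to $0$, since isolated points are vacuous conditions there. I would note this convention explicitly.

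The main point needing care is the identity $\|g(T)\|=\sup_{\sigma(T)}|g|$ for continuous $g$. I would quote it as standard: continuous functional calculus for normal operators is an isometric $*$-isomorphism $C(\sigma(T))\to C^*(T)$. With that identity, both directions reduce to the elementary estimate on $f_n$ above.
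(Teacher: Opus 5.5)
Your proof is correct. The sufficiency half is the same as the paper's: the bound $|f_n(z)|\le 2/(n\delta)$ on $\sigma(T)\setminus\{1\}$ together with the spectral theorem.

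For necessity you take a different route. You use the isometry $\|g(T)\|=\sup_{\sigma(T)}|g|$ of the continuous functional calculus to turn norm convergence of the averages into uniform convergence of the polynomials $f_n$ on $\sigma(T)$. You then note that the pointwise limit $\mathbf 1_{\{1\}}$ can be a uniform limit of continuous functions on $\sigma(T)$ only if $1$ is not an accumulation point of $\sigma(T)$.

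The paper instead argues in an arbitrary complex Banach space. Norm convergence gives the ergodic decomposition $X=\{x:Tx=x\}\oplus Y$ with $Y=\overline{(I-T)X}$, and the averages tend to $0$ in norm on $Y$. Hence $I-\frac1n\sum_{k=1}^n T^k$ is invertible on $Y$ for large $n$. Since this operator factors through $I-T$, one gets $(I-T)Y=Y$, so $1\notin\sigma(T|_Y)$, and $1$ is at most an isolated point of $\sigma(T)$.

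Your argument is shorter and stays inside spectral theory, but it genuinely needs normality, namely for $\|p(T)\|=\sup_{\sigma(T)}|p|$. The paper's necessity argument uses neither normality nor the Hilbert space structure. It therefore shows that the ``only if'' direction holds for every bounded operator on a Banach space whose Ces\`aro averages converge in norm, with normality needed only for the ``if'' direction.

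Your explicit convention that $1\notin\sigma(T)$ counts as ``isolated'' matches the paper's implicit usage.
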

\begin{proof} When $1$ is isolated in the spectrum, 
 $\sup_{1 \ne \lambda \in \sigma(T)} |\sum_{k=1}^n \lambda^k| \le 2/\delta$ for some
 $\delta>0$, and the result follows from the spectral theorem.

 When $T$ on a complex Banach space $X$  has $\frac1n\sum_{k=1}^n T^k$ convergent in operator 
 norm, say to $E$, the ergodic decomposition $X=\{x\in X: Tx=x\}\oplus \overline{(I-T)X}$
 yields that $\|\frac1n\sum_{k=1}^n T^k\| \to 0$ on $Y:=\overline{(I-T)X}$. Hence
 $I-\frac1n\sum_{k=1}^n T^k$ is invertible on $Y$  for large $n$, so $(I-T)Y=Y$. Hence $1$ 
is isolated in $\sigma(T)$.
\end{proof}

\begin{prop} \label{UET}
Let $G$ be a LCA group with dual $\Gamma$ and $\mu$ a probability on $G$ such that $1$
is isolated in $\overline{\hat\mu(\Gamma)}$. Let $\{U(t):t \in G\}$ be a continuous
unitary representation in $H$, and $V:=\int_G U(t)d\mu(t)$.
Then $\frac1n\sum_{k=1}^n V^k$ converges in operator norm.

If in addition $\overline{\hat\mu(\Gamma)}\cap \mathbb T =\{1\}$, then $V^n$ converges 
in operator norm.
\end{prop}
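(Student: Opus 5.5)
The plan is to reduce everything to Proposition \ref{uet}, using the fact that $V$ is a normal contraction together with the inclusion $\sigma(V)\subset\overline{\hat\mu(\Gamma)}$ (the result of \cite{JRT}, which is also proved later in the paper).

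First, $V$ is normal. Since $G$ is Abelian, all the $U(t)$ commute, so $V=\int_G U(t)d\mu(t)$ commutes with its adjoint $V^*=\int_G U(t)^*d\mu(t)=\int_G U(-t)d\mu(t)$. Moreover $\|V\|\le \mu(G)=1$, so $V$ is a normal contraction.

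Next I would establish $\sigma(V)\subset\overline{\hat\mu(\Gamma)}$. To keep the argument self-contained, I would use the spectral measure $E$ on $\Gamma$ from Stone's theorem, $U(t)=\int_\Gamma \gamma(t)\,E(d\gamma)$. Fubini, applied to the scalar measures $\langle E(\cdot)x,y\rangle$, gives $V=\int_\Gamma \hat\mu(\gamma)\,E(d\gamma)$. For $\lambda\notin\overline{\hat\mu(\Gamma)}$, the function $(\lambda-\hat\mu)^{-1}$ is bounded on $\Gamma$, and $\int_\Gamma(\lambda-\hat\mu(\gamma))^{-1}E(d\gamma)$ inverts $\lambda I-V$. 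The only care needed is the justification of Fubini in the weak sense, and since $\mu$ is finite and $|\gamma|=1$, this is routine. This is the main (though mild) step.

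Now the claims follow. Since $\sigma(V)\subset\overline{\hat\mu(\Gamma)}$ and $1$ is isolated in $\overline{\hat\mu(\Gamma)}$, the point $1$ is either isolated in $\sigma(V)$ or not in $\sigma(V)$ at all. In the latter case $I-V$ is invertible, and the averages tend to $0$ in norm, because
\[
\frac1n\sum_{k=1}^n V^k=\frac1n V(I-V^n)(I-V)^{-1}.
\]
In the former case, Proposition \ref{uet} gives norm convergence of the averages.

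For the second claim, I would use the spectral theorem for the normal operator $V$:
\[
\|V^n-P\|=\sup_{\lambda\in\sigma(V)\setminus\{1\}}|\lambda|^n,
\]
where $P$ is the spectral projection onto $\ker(I-V)$ (take $P=0$ if $1\notin\sigma(V)$). The set $K:=\sigma(V)\setminus\{1\}$ is compact, because $1$ is isolated. It lies in the closed unit disc and misses $\mathbb T$, since $\sigma(V)\cap\mathbb T\subset\overline{\hat\mu(\Gamma)}\cap\mathbb T=\{1\}$. Hence $r:=\sup_{\lambda\in K}|\lambda|<1$, and $\|V^n-P\|\le r^n\to0$.
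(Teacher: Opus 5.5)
Your proof is correct and is essentially the paper's. The first claim comes from $\sigma(V)\subset\overline{\hat\mu(\Gamma)}$ together with Proposition~\ref{uet}. For the second claim, your estimate $\|V^n-P\|\le r^n$ with $P=E_V(\{1\})$ is the paper's decay of $V^n$ on $\overline{(I-V)H}=(I-P)H$ via the ergodic decomposition. The only minor differences are that you prove the spectral inclusion with the bounded inverse $\int_\Gamma(\lambda-\hat\mu)^{-1}\,dE$ instead of the change-of-variables argument of Theorem~\ref{jrt}, and that you treat the case $1\notin\sigma(V)$ explicitly, which the paper leaves implicit.
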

\begin{proof} By the proof of \cite[Proposition 1.9]{JRT} (see Theorem \ref{jrt} below),
$\sigma(V)\subset \overline{\hat\mu(\Gamma)}$, so $\frac1n\sum_{k=1}^n V^k$ converges in 
operator norm by Proposition \ref{uet}.
 
If in addition $\overline{\hat\mu(\Gamma)}\cap \mathbb T =\{1\}$, then 
$\sigma(V) \subset \{1\}\cap\{z: |z|\le \rho\}$ for some $\rho<1$, and 
$\|V^n\|_{I-V)H} \to 0$, so $V^n$ converges in operator norm by the ergodic decomposition.
\end{proof}

{\bf Remarks.} 1.  Proposition \ref{UET} applies when $G$ is compact and $\mu \in M_0(G)$,
since $\Gamma$ is discrete; see also \cite[Corollary 4.16]{GJ}.

2. In general, if $\mu$ is absolutely continuous and supported in a compact subgroup,
then Proposition \ref{UET} applies, by \cite[Theorem 6.6]{GJ}.

3. Galindo and Jord\'a \cite[Corollary 4.11(ii)]{GJ} proved Proposition \ref{UET} for
the regular representation in $L_2(G)$, and studied convergence in operator norm of 
$\frac1n \sum_{k=1}^n V^k$ for the regular representation in $L_p(G)$, $1<p<\infty$.
\medskip

\subsection{Ritt operators}

An  operator $T$ on a complex  Banach space $X$ is called 
{\it Ritt} if it is power-bounded and $\sup_n n\|T^n(I-T)\| < \infty$.

When $T$ is Ritt on $X$ reflexive, $T^n$ converges strongly, by the ergodic decomposition.
\smallskip

{\bf Definition.} Let $0\le r<1$. The closed convex hull of the disk of radius $r$ centered 
at 0 with the point 1 is called a {\it closed Stolz region}. It is obtained by adding to the 
disk the closed area between the two tangent lines to the disk originating at 1. 
See sketch in \cite[p. 110]{BJR} or \cite[p. 194]{LLM}.
If $\Delta$ is a Stolz region, then there is a $C$ such that $|1-\zeta| \le C(1-|\zeta|)$  
for every $\zeta \in \Delta$ \cite[p. 4]{Pr}. See \cite{GT} for additional properties. 
\smallskip

Nagy and Zem\'anek \cite{NZ} proved that the spectrum of a Ritt operator is contained
in a closed Stolz region; see  also Lyubich \cite{Ly}. 
The converse is false, even for contractions on Hilbert space \cite[p. 1133]{CCL}. 
However, if $T$ is power-bounded with $\sigma(T)$ contained in a Stolz region, then 
$\|T^n(I-T)\| \to 0$ by \cite{KT}.

\begin{prop} \label{stolz}
{\bf \cite[Proposition 2.5]{CCL}} A normal contraction on a complex Hilbert space is Ritt 
if and only if its spectrum is contained in a Stolz region.
\end{prop}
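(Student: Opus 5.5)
The statement is that a normal contraction $T$ on a complex Hilbert space is Ritt exactly when $\sigma(T)$ lies in a closed Stolz region. Since $T$ is normal, the spectral theorem reduces both directions to scalar estimates on $\sigma(T)$. The key facts are
\[
\|f(T)\| = \sup_{\zeta\in\sigma(T)}|f(\zeta)|
\]
for continuous $f$ (in particular polynomials), and that power-boundedness is automatic because $\|T^n\|\le 1$. So being Ritt is equivalent to
\[
\sup_n \sup_{\zeta\in\sigma(T)} n|\zeta|^n|1-\zeta| < \infty .
\]
Here $\sigma(T)$ is a compact subset of the closed unit disk.

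\emph{Sufficiency.} Suppose $\sigma(T)\subset\Delta$, a closed Stolz region. Use the quoted estimate $|1-\zeta|\le C(1-|\zeta|)$ for $\zeta\in\Delta$. Put $r=|\zeta|\in[0,1]$. Then
\[
n|\zeta|^n|1-\zeta| \le C\,n r^n(1-r) \le C\,n\cdot\frac{1}{n+1}\Bigl(\frac{n}{n+1}\Bigr)^n \le C .
\]
The middle inequality holds because $r^n(1-r)$ is maximized at $r=n/(n+1)$. Hence $\sup_n n\|T^n(I-T)\|\le C$, and $T$ is Ritt.

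\emph{Necessity.} One could cite Nagy--Zem\'anek \cite{NZ}, but a direct argument is also short. Suppose $n|\zeta|^n|1-\zeta|\le K$ for all $n$ and all $\zeta\in\sigma(T)$.

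First, if $|\zeta|=1$ then $n|1-\zeta|\le K$ for every $n$, which forces $\zeta=1$. So $\sigma(T)\cap\mathbb T\subset\{1\}$.

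Next, take $\zeta\in\sigma(T)$ with $|\zeta|=r<1$ and choose $n$ of order $1/(1-r)$. Then $r^n$ is bounded below by a constant $c>0$, so
\[
|1-\zeta|\le \frac{K}{c\,n}\le C'(1-r)
\]
for $r$ near $1$, say $r\ge 1/2$; this needs a routine check that $n\approx 1/(1-r)$ is admissible. For $r\le 1/2$ we have trivially $|1-\zeta|\le 2\le 4(1-r)$. Thus $|1-\zeta|\le C(1-|\zeta|)$ on all of $\sigma(T)$, and this includes the point $\zeta=1$.

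\emph{Main obstacle.} The remaining step is geometric: a compact set $S$ in the closed disk on which $|1-\zeta|\le C(1-|\zeta|)$ holds is contained in some closed Stolz region. Away from $1$, $S$ lies in a disk of radius $r_0<1$, since $|\zeta|\le 1-|1-\zeta|/C$. Near $1$, write $\zeta=1-\rho e^{i\phi}$ with small $\rho$. Then
\[
1-|\zeta| = \rho\cos\phi + O(\rho^2),
\]
so the inequality $\rho\le C(1-|\zeta|)$ gives $\cos\phi\ge 1/C - O(\rho)$. Hence $S$ lies in a sector at $1$ with half-angle strictly less than $\pi/2$.

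It remains to choose the radius. Take $r$ close enough to $1$ that the tangent lines from $1$ to the circle of radius $r$ make an angle wider than this sector, and also $r\ge r_0$. The Stolz region generated by $r$ then contains both the small disk and the sector near $1$. The only delicate point is patching these two regions together, which compactness handles.
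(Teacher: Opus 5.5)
Your proof is correct. The paper does not prove this statement; it quotes \cite[Proposition 2.5]{CCL}, and the surrounding text indicates the intended route. Necessity is the Nagy--Zem\'anek theorem \cite{NZ}: the spectrum of any Ritt operator lies in a closed Stolz region. Sufficiency is the Stolz-region estimate $|1-\zeta|\le C(1-|\zeta|)$ from \cite{Pr} combined with the spectral theorem, which is exactly your sufficiency argument using $\sup_{0\le r\le 1} nr^n(1-r)\le 1$.

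Your necessity half is genuinely different. Instead of citing \cite{NZ}, you use only $\sigma(T^n(I-T))=\{\zeta^n(1-\zeta):\zeta\in\sigma(T)\}$ and $\rho(\cdot)\le\|\cdot\|$. Choosing $n\approx 1/(1-|\zeta|)$ gives the angular-ratio bound $|1-\zeta|\le C(1-|\zeta|)$ on $\sigma(T)$, and you then turn that bound into containment in a Stolz region. Normality is never used here, so this reproves the \cite{NZ} spectral inclusion for arbitrary Ritt operators on Banach spaces; power-boundedness is not even needed, since $|\zeta|>1$ is excluded by the same inequality. The price is the geometric lemma, i.e.\ the converse of the \cite{Pr} estimate, which the citation route gets for free. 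That lemma is also exactly the step the paper takes for granted in Corollary \ref{H-ritt}, where \eqref{BAR} is said to force $\sigma(V)$ into a closed Stolz region.

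One correction to your last step. The patching is not a compactness issue, and it needs a second condition on $r$. The reason is that the Stolz region $\Delta_r$ (the convex hull of the closed disk of radius $r$ and the point $1$) contains the tangent cone at $1$ only up to distance $\sqrt{1-r^2}$, and this shrinks as $r\to 1$. Here is a clean version.
\begin{itemize}
\item For $\zeta=1-\rho e^{i\phi}\in S\setminus\{1\}$ you have exactly $1-|\zeta|\le 1-|\zeta|^2=2\rho\cos\phi-\rho^2$. Hence $\cos\phi>1/(2C)$, with no error term.
\item Put $\alpha=\arccos(1/(2C))$, and choose $r<1$ with $r\ge\sin\alpha$ and $C^2(1-r)\le 1+r$.
\item If $\rho\le\sqrt{1-r^2}$, then $\zeta$ lies in the truncated tangent cone, which is inside $\Delta_r$.
\item If $\rho>\sqrt{1-r^2}$, then $\rho> C(1-r)$ by the choice of $r$, so $|\zeta|\le 1-\rho/C<r$.
\end{itemize}
Either way $\zeta\in\Delta_r$.
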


The inportance of the Rittt condition in ergodic theory is shown by the following result of
Le Merdy and Xu \cite[Corollary 5.2]{LMX} (see also \cite[Theorem 2.7]{CCL}).

\begin{thm} \label{lmx}
Fix $1<p< \infty$, and let $T$ be a positive contraction of $L^p(\Omega,\mathcal B,m)$.
If $T$ is Ritt, then $T^nf$ converges a.e. for any $f \in L^p(\Omega,m)$.
\end{thm}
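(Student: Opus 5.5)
The plan is to reduce a.e.\ convergence of $T^nf$ to two facts: a.e.\ convergence of the Ces\`aro averages $M_nf:=\frac1n\sum_{k=1}^n T^kf$, and an a.e.\ finite square function. First, since $T$ is a positive contraction of $L^p$ with $1<p<\infty$, Akcoglu's dominated ergodic theorem gives $\sup_n|M_nf|\in L^p$. Combined with the mean ergodic theorem, this yields a.e.\ convergence of $M_nf$ for every $f\in L^p$. It then suffices to show that $T^nf-M_nf\to0$ a.e.

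Second, write $T^nf-M_nf$ through the increments $T^j(I-T)f$. Since $T^k-T^n=\sum_{j=k}^{n-1}T^j(I-T)$, summing over $k$ gives
\[
M_nf-T^nf=\frac1n\sum_{k=1}^{n}\sum_{j=k}^{n-1}T^j(I-T)f=\frac1n\sum_{j=1}^{n-1} j\,T^j(I-T)f .
\]
Fix $N$ and split the sum into $j<N$ and $N\le j<n$. For fixed $N$, the part with $j<N$ tends to $0$ pointwise as $n\to\infty$, because of the factor $1/n$. For the part with $j\ge N$, Cauchy--Schwarz gives
\[
\Bigl|\frac1n\sum_{j=N}^{n-1} jT^j(I-T)f\Bigr|\le \frac1n\Bigl(\sum_{j<n}j\Bigr)^{1/2}\Bigl(\sum_{j\ge N} j|T^j(I-T)f|^2\Bigr)^{1/2}\le \Bigl(\sum_{j\ge N} j|T^j(I-T)f|^2\Bigr)^{1/2}.
\]
Hence, wherever the square function
\[
S f:=\Bigl(\sum_{j\ge1} j\,|T^j(I-T)f|^2\Bigr)^{1/2}
\]
is finite, the tail tends to $0$ as $N\to\infty$, and therefore $T^nf-M_nf\to0$ at that point. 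So everything reduces to the estimate $\|Sf\|_p\le C\|f\|_p$, which makes $Sf$ finite a.e.

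Third, and this is the main obstacle, we must prove the square function estimate for a positive Ritt contraction. The plan is to go through $H^\infty$ functional calculus on a Stolz-type domain. By Akcoglu--Sucheston, a positive contraction on $L^p$ dilates to a positive invertible isometry. By Coifman--Weiss transference, $T$ then admits the $p$-multiplier estimates valid for the shift on $\ell^p(\mathbb Z)$, namely $\|\phi(T)\|\le C_p\|\phi\|_{\mathcal M_p(\mathbb T)}$ for polynomials $\phi$.

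Combining this with the Ritt resolvent bound (spectrum in a Stolz region $\Delta$, $\|(\lambda-T)^{-1}\|\lesssim|1-\lambda|^{-1}$ outside a slightly larger region), one shows that $T$ has a bounded $H^\infty(B_\gamma)$ calculus on a Stolz domain $B_\gamma$. I would try first to decompose $\phi$ dyadically near $1$ and use Marcinkiewicz-type multiplier bounds on each dyadic piece.

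On $L^p$, a bounded $H^\infty$ calculus yields square function estimates via Kalton--Weis / Khintchine randomization. Applied to $\phi_\epsilon(z)=\sum_j \epsilon_j j^{1/2}z^j(1-z)$ with uniformly bounded $H^\infty(B_\gamma)$ norm, averaging over random signs $\epsilon_j$ gives $\|Sf\|_p\le C\|f\|_p$. Inserting this into the second step completes the proof.
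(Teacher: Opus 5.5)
The paper does not prove this statement; it quotes it from Le Merdy and Xu (their Corollary 5.2). Your Steps 1 and 2 are correct, and they are how a.e. convergence is extracted from a square function estimate. Akcoglu's theorem gives a.e. convergence of $M_nf$. The identity $M_nf-T^nf=\frac1n\sum_{j=1}^{n-1}jT^j(I-T)f$ holds. The Cauchy--Schwarz tail argument then reduces everything to $\|Sf\|_p\le C\|f\|_p$.

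The gap is in Step 3, which carries the whole weight of the theorem. First, the randomization fails as written, because the functions $\phi_\epsilon(z)=\sum_j\epsilon_j j^{1/2}z^j(1-z)$ are not uniformly bounded on $B_\gamma$. Take $\epsilon_j\equiv 1$ and $z=r\in(0,1)$, which lies in every Stolz domain. Then $\phi_\epsilon(r)=(1-r)\sum_j j^{1/2}r^j\sim\Gamma(3/2)(1-r)^{-1/2}\to\infty$. On a Stolz domain only the square sum is bounded: $\sum_j j|z|^{2j}|1-z|^2\le |1-z|^2(1-|z|)^{-2}\le C^2$. Turning such an $\ell^2$-bound into $\|(\sum_j|\phi_j(T)f|^2)^{1/2}\|_p\le C'\|f\|_p$ is not a Khintchine average of the scalar calculus. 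It needs a further theorem, for example:
\begin{itemize}
\item the Kalton--Weis extension of a bounded $H^\infty$ calculus on $L^p$ to $\ell^2$-valued symbols, or
\item Le Merdy's theorem that a bounded $H^\infty(B_\gamma)$ calculus on $L^p$ implies exactly this square function estimate.
\end{itemize}
Keeping one index $m_k\in[2^k,2^{k+1})$ per dyadic block does give uniformly bounded $\ell^1$-sums $\sum_k 2^k|z|^{m_k}|1-z|$ on $B_\gamma$. But reassembling the blocks requires averaging over the choices of $m_k$ inside $L^{p/2}$, which is legitimate only for $p\ge 2$.

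Second, the route to the $H^\infty(B_\gamma)$ calculus is not yet an argument. Akcoglu--Sucheston dilation plus Coifman--Weiss transference bound $\phi(T)$ by the $\ell^p(\mathbb Z)$-multiplier norm of $\phi|_{\mathbb T}$. So they only control functions whose boundary values on $\mathbb T$ are multipliers. Via Marcinkiewicz, that typically means functions bounded and holomorphic near $\overline{\mathbb D}\setminus\{1\}$ and on a sector at $1$ of half-opening larger than $\pi/2$. An element of $H^\infty(B_\gamma)$ has no boundary values on $\mathbb T\setminus\{1\}$. Moreover, the natural dyadic pieces are Cauchy integrals over arcs of the boundary of a Stolz domain at distance about $2^{-k}$ from $1$. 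Their traces on $\mathbb T$ have only negative Fourier coefficients, so the transferred calculus does not see them at all.

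What is missing is an angle reduction from half-opening $>\pi/2$ to $<\pi/2$. A standard way goes through $I-T$:
\begin{itemize}
\item $e^{-t(I-T)}=e^{-t}e^{tT}$ is a bounded analytic semigroup of positive contractions on $L^p$.
\item Fendler's dilation, transference and the Kalton--Weis angle reduction on $L^p$ then give $I-T$ a bounded $H^\infty(\Sigma_\theta)$ calculus with $\theta<\pi/2$.
\item Le Merdy's comparison between sectorial and Stolz-domain calculi then yields the $H^\infty(B_\gamma)$ calculus.
\end{itemize}
With these results supplied, your outline becomes a complete proof along the lines of the cited source. As written, the square function estimate is asserted rather than proved.
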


 \begin{thm} \label{llm}
Let $G$ be a LCA group with dual $\Gamma$, and $\mu$ a probability on $G$ satisfying
 the following {\rm bounded angular ratio (BAR)} condition: there exists $C$ such that
\begin{equation} \label{bar}
\sup_{1 \ne \gamma\in \Gamma} \frac{|1-\hat\mu(\gamma)|}{ 1-|\hat\mu(\gamma)|}=C < \infty.
\end{equation}
If $1<p<\infty$ (fixed), and $\{S(t): t\in G\}$ is a strongly continuous representation
of $G$ by positive isometries of $L^p(\Omega,\mathcal B,\mu)$, then the operator 
$P:=\int_G S(t)d\mu(t)$ is Ritt, and $P^nf$ converges a.e for any $f \in L^p(\Omega,m)$.
\end{thm}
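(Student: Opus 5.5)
The plan is to reduce the statement, via transference, to the same statement for convolution by $\mu$ on $L^p(G)$. There the Ritt property can be read off from the Fourier transform on $L^2(G)$ and then moved to $L^p(G)$ by interpolation. The a.e.\ convergence at the end will then follow from Theorem \ref{lmx}, since $P$ is clearly a positive contraction of $L^p(\Omega)$: each $S(t)$ is a positive isometry and $\mu$ is a probability.

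\textbf{Step 1 (the $L^2(G)$ model).} Let $\lambda_p(\nu)$ denote convolution by $\nu\in M(G)$ on $L^p(G)$. On $L^2(G)$, Plancherel turns $\lambda_2(\mu)$ into multiplication by $\hat\mu$ (up to the usual conjugation convention), which is a normal contraction. For $\zeta=\hat\mu(\gamma)$ with $\gamma\ne 1$, (\ref{bar}) gives
\[
|\zeta^n(1-\zeta)|\le C|\zeta|^n(1-|\zeta|)\le C\sup_{0\le r\le 1}r^n(1-r)\le \frac{C}{n+1},
\]
and at $\gamma=1$ we have $1-\hat\mu(1)=0$. By continuity of $\hat\mu$ the bound persists on the closure, which contains the spectrum of the multiplier. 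Hence $\sup_n n\|\lambda_2(\mu)^n(I-\lambda_2(\mu))\|<\infty$, so $\lambda_2(\mu)$ is Ritt. Equivalently, Proposition \ref{stolz} applies, because (\ref{bar}) places $\hat\mu(\Gamma)$ in a Stolz region.

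\textbf{Step 2 (from $L^2(G)$ to $L^p(G)$).} The operator $\lambda(\mu)$ acts as a positive contraction simultaneously on all $L^q(G)$, $1\le q\le\infty$, so it is a "positive contraction on the $L^q$-scale". For such operators, the Ritt property on $L^2$ implies the Ritt property on $L^p$ for every $1<p<\infty$. This is the discrete analogue of the fact that analyticity of a symmetric-type contraction semigroup extrapolates from $L^2$; one proves it by Stein interpolation of the analytic family $z\mapsto T^n(I-T)$ resolvent estimates, as in Blunck and Le Merdy--Xu. One cannot interpolate naively with the trivial $L^1$ bound $\|\mu^{*n}*(\delta_e-\mu)\|\le 2$, since that bound does not decay. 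Instead I would interpolate the Stolz-type resolvent estimate
\[
\|(z-T)^{-1}\|\le \frac{K}{|1-z|}\quad\text{outside a Stolz region,}
\]
which characterizes Ritt operators, by the Nagy--Zem\'anek and Lyubich results. The $L^2$ estimate of this form, combined with power-boundedness on $L^1$ and $L^\infty$, is what gets interpolated. \emph{This extrapolation is the main obstacle.} I would first try to cite it in the form given by Le Merdy and Xu for positive contractions.

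\textbf{Step 3 (transference).} $G$ is abelian, hence amenable, and $\{S(t)\}$ is a strongly continuous representation by positive isometries (in particular, separation-preserving ones) of $L^p(\Omega)$. The Coifman--Weiss transference principle therefore gives
\[
\Bigl\|\int_G S(t)\,d\nu(t)\Bigr\|_{L^p(\Omega)}\le \|\lambda_p(\nu)\|_{L^p(G)}\qquad\text{for every }\nu\in M(G).
\]
Apply this with $\nu=\mu^{*n}*(\delta_e-\mu)$. Since $\int_G S(t)\,d\nu(t)=P^n(I-P)$, we get
\[
n\|P^n(I-P)\|\le n\|\lambda_p(\mu)^n(I-\lambda_p(\mu))\|,
\]
which is bounded by Step 2. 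Power-boundedness of $P$ is immediate from $\|P\|\le1$, so $P$ is Ritt. Theorem \ref{lmx} then yields a.e.\ convergence of $P^nf$ for $f\in L^p(\Omega,m)$.
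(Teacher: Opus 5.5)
Your route differs from the paper's. The paper simply quotes \cite[Theorem 6]{LLM} for the Ritt property of $P$ and then argues exactly as in your last sentence. Your architecture is Plancherel on $L^2(G)$, extrapolation to $L^p(G)$, Coifman--Weiss transference, then Theorem \ref{lmx}. Steps 1 and 3 are fine; transference works for any uniformly bounded representation on $L^p(\Omega)$, since $L^p(G;L^p(\Omega))=L^p(G\times\Omega)$ and convolution in the $G$-variable has the same norm there.

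The gap is Step 2, which carries all of the analytic content of the LLM theorem, and the argument you sketch for it does not work. You never specify an analytic family to which Stein's theorem would apply. The fallback you describe, interpolating the Ritt resolvent bound at a fixed $z$ against power-boundedness on $L^1$, fails in two ways.
\begin{itemize}
\item For $z\in\mathbb D$ outside the Stolz region there may be no $L^1$ endpoint at all. The spectrum of $\lambda_1(\mu)$, i.e.\ of $\mu$ in $M(G)$, can be strictly larger than $\overline{\hat\mu(\Gamma)}$ (Wiener--Pitt phenomenon).
\item For $|z|>1$ the only $L^1$ bound is $1/(|z|-1)$. Riesz--Thorin then gives $\|(z-T)^{-1}\|_p\le (|z|-1)^{\theta-1}(K/|1-z|)^{\theta}$. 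This is not $O(1/|1-z|)$ when $z\to1$ tangentially, e.g.\ with $|z|-1\approx|1-z|^2$.
\end{itemize}
As written, your proof therefore rests on an unproved lemma.

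The lemma is true, and here is one way to prove it. Let $T=\lambda(\mu)$, a contraction on every $L^q(G)$. Take $1<p<2$ with $1/p=1-\theta/2$; for $p>2$ use $L^\infty$ instead of $L^1$, or duality.
\begin{enumerate}
\item Riesz--Thorin with $\|T^n(I-T)\|_1\le 2$ and $\|T^n(I-T)\|_2\le C/(n+1)$ gives $\|T^n(I-T)\|_p\to0$. By the spectral mapping theorem, exactly as in Proposition \ref{kt}, this yields $\sigma_{L^p}(T)\cap\mathbb T\subset\{1\}$.
\item Condition (\ref{bar}) gives $\Re(1-\hat\mu(\gamma))\ge|1-\hat\mu(\gamma)|/C$. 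Hence $\|e^{-z(I-T)}\|_2\le1$ for $|\arg z|\le\delta:=\arcsin(1/C)$, while $\|e^{-t(I-T)}\|_1\le1$ for $t>0$.
\item Apply Stein interpolation to the analytic family $w\mapsto\exp\bigl(-t\,e^{i\phi w}(I-T)\bigr)$ on $0\le\Re w\le1$, with $|\phi|\le\delta$. The growth is admissible since $\|I-T\|\le2$. This gives $\|e^{-z(I-T)}\|_p\le1$ for $|\arg z|\le\theta\delta$, so $I-T$ is sectorial of angle less than $\pi/2$ on $L^p$.
\item Combine items 1 and 3 with $\|T\|_p\le1$. Sectoriality handles $\lambda$ near $1$; compactness and the Neumann series handle the rest. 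This gives $\|(\lambda-T)^{-1}\|_p\le K/|\lambda-1|$ for $|\lambda|>1$, so $T$ is Ritt by the Nagy--Zem\'anek/Lyubich characterization.
\end{enumerate}
With this inserted, your route is a self-contained alternative to quoting LLM. It also makes visible that positivity of the $S(t)$ is used only in the final appeal to Theorem \ref{lmx}.
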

\begin{proof} The operator $P$ is Ritt by \cite[Theorem 6]{LLM}, and is a positive contraction
by the properties of the representation $S(t)$. Hence the a.e. convergence of $P^nf$ holds
by Theorem \ref{lmx}.
\end{proof}

\begin{cor}{\cite[Theorem 2.20.]{JRT}}
Let $G$ be a LCA group with dual $\Gamma$ and $\mu$ a probability on $G$ satisfying 
\eqref{bar}. Let $U(t)$ be the representation of $G$ in $L^p(\Omega,m)$  induced by
a measure preserving action in a probability space $(\Omega,\mathcal B,m)$, and define 
the operator $V:=\int_G U(t)d\mu(t)$. Then $V^nf$ converges a.e. for any 
$f \in L^p(\Omega,m)$, $1<p<\infty$.
\end{cor}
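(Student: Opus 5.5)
The corollary follows by applying Theorem \ref{llm} to the representation induced by the action. The only work is to check that its hypotheses hold for the family $S(t):=U(t)$.

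Let the measure preserving action be $\{\theta(t): t\in G\}$ on $(\Omega,\mathcal B,m)$, and define $U(t)f=f\circ\theta(t^{-1})$ on $L^p(\Omega,m)$. I will verify the following properties in order.
\begin{enumerate}
\item Each $U(t)$ is positive, since composition with a map preserves nonnegativity.
\item Each $U(t)$ is an isometry of $L^p$, because $\theta(t^{-1})$ preserves $m$; indeed it is an invertible isometry with inverse $U(t^{-1})$.
\item The representation property $U(t_1t_2)=U(t_1)U(t_2)$ comes from $\theta(t_1t_2)=\theta(t_1)\theta(t_2)$ and commutativity of $G$.
\item The family $U(t)$ is strongly continuous in $L^p$.
\end{enumerate}
Once these hold, Theorem \ref{llm} applies with $S(t)=U(t)$ and $P=V$. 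Condition \eqref{bar} is assumed, so $V$ is Ritt and $V^nf$ converges a.e. for every $f\in L^p(\Omega,m)$.

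The main obstacle is the strong continuity in $L^p$. The paper's introduction only asserts strong continuity in $L^2$, when $L^2(m)$ is separable and the action is jointly measurable. I will transfer continuity from $L^2$ to $L^p$ as follows.

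For $f\in L^\infty$ and $1<p<\infty$, first take $p\ge 2$. Then
\[
\|U(t)f-f\|_p^p\le (2\|f\|_\infty)^{p-2}\|U(t)f-f\|_2^2 ,
\]
which tends to $0$ as $t\to e$ by continuity in $L^2$. For $p<2$ use Hölder on the probability space, $\|\cdot\|_p\le\|\cdot\|_2$. Since $L^\infty$ is dense in $L^p$ and the operators $U(t)$ are uniformly bounded (all are isometries), the $\varepsilon/3$ argument gives strong continuity at $e$. Continuity at any other point then follows from the group property.

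If separability is not available, I would instead appeal directly to the standing convention in the statement, namely that the action induces a (strongly continuous) representation as in \cite{JRT}. Alternatively, one can note that measurability of $t\mapsto\langle U(t)f,g\rangle$ together with the standard automatic-continuity theorem for measurable unitary representations gives continuity in $L^2$, and the transfer above then gives it in $L^p$.
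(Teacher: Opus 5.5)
Your proposal is correct and follows the paper's route: apply Theorem~\ref{llm} with $S(t)=U(t)$. The induced operators are strongly continuous positive isometries of $L^p(\Omega,m)$, so $V$ is Ritt and a.e.\ convergence of $V^nf$ follows. Your transfer of strong continuity from $L^2$ to $L^p$ is a valid detail the paper leaves implicit. However, your alternative fallback (automatic continuity of weakly measurable unitary representations) generally needs separability of the Hilbert space. So what is really used is the standing assumption that the induced representation is strongly continuous.
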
 

\medskip

\subsection{Strict aperiodicity} 

 A probability $\mu$ on a LCA group $G$ is called {\it strictly aperiodic}
if its support $\mathcal S$ is not contained in a class of a closed subgroup different 
from $G$. Equivalently, $G$ is the closed group generated by $\mathcal S - \mathcal S$. 
If $\mu$ is strictly aperiodic, then it is {\it adapted} -- the closed subgroup generated by
$\mathcal S$ is $G$.

It is welll-known that $\mu$ is adapted if and only if $\hat\mu(\gamma) \ne 1$ for 
every $1 \ne \gamma \in \Gamma$.
It  is also known (\cite[Theorem 3.3]{BJR0}) that a probability $\mu$ is strictly aperiodic
 if and only if $|\hat\mu(\gamma)|<1$ for every $1 \ne \gamma \in \Gamma$. 
Hence an adapted probability satisfying \eqref{bar} is strictly aperiodic.

\smallskip

It was shown in \cite[Theorem 4]{Lin} that if  $\mu$ is strictly aperiodic on a LCA group
$G$, then for any bounded continuous representation $T(t)$ in a reflexive Banach space,
the powers of the operator $V:=\int_G T(t)d\mu(t)$ converge in the strong operator topology.
Conversely, if $V^n$ converges weakly for every unitary representation of $G$, then $\mu$
is strictly aperiodic, by \cite[Theorem 2.1]{LW}.

Given a power-bounded operator $V$ on a reflexive Banach space, the convergence 
$\|V^n(I-V)\| \to 0$ implies
convergence of $V^n$ in the strong operator topology, by the  ergodic decomposition. 

\begin{prop} \label{foguel-weiss}
Let $G$ be a LCA group with dual $\Gamma$ and $\mu$ a probability on $G$.
Let $U(t)$ be a continuous unitary representation and define $V=\int_G U(t)d\mu(t)$.
If $\mu(\{0\})>0$, then $\|V^n(I-V)\| =O(1/\sqrt{n})$.
\end{prop}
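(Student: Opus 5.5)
The plan is to reduce to a scalar estimate through the spectral theorem. Put $a:=\mu(\{0\})\in(0,1]$. The case $a=1$ is trivial: then $V=I$ and $V^n(I-V)=0$. So assume $0<a<1$ and decompose $\mu=a\delta_0+(1-a)\nu$, where $\nu$ is a probability on $G$. This gives $V=aI+(1-a)W$ with $W:=\int_G U(t)\,d\nu(t)$. Since $U(\cdot)$ is a unitary representation of an Abelian group, the operators $U(t)$ are commuting normal contractions, and their weak integrals against measures are again commuting normal contractions. In particular $W$ is a normal contraction with $W^*W=WW^*$ and $\|W\|\le 1$. Hence $V$ is normal, and by the spectral theorem for normal operators
\[
\|V^n(I-V)\|=\sup_{\lambda\in\sigma(V)}|\lambda^n(1-\lambda)|.
\]
Moreover $\sigma(V)=a+(1-a)\sigma(W)\subset D_a:=\{a+(1-a)w: |w|\le 1\}$, which is the closed disk of center $a$ and radius $1-a$; it is internally tangent to $\mathbb T$ at $1$. (Alternatively, Theorem \ref{jrt} places $\sigma(V)$ inside $\overline{\hat\mu(\Gamma)}$, and every $\hat\mu(\gamma)$ lies in $D_a$ anyway.)

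It therefore suffices to show $\sup_{\lambda\in D_a}|\lambda|^n|1-\lambda|\le C_a/\sqrt n$. This is the only step needing a computation. Write $\lambda=a+(1-a)w$ and $s:=|1-\lambda|=(1-a)|1-w|$. A direct calculation gives
\[
|\lambda|^2=a^2+(1-a)^2|w|^2+2a(1-a)\,\mathrm{Re}\,w,
\]
and, using $|w|\le1$ together with $2\,\mathrm{Re}\,w=|w|^2+1-|1-w|^2\le 2-|1-w|^2$,
\[
|\lambda|^2\le 1-a(1-a)|1-w|^2=1-\frac{a}{1-a}\,s^2 .
\]
(This is the disk analogue of the Stolz estimate, but with a quadratic rather than linear gap, which is why the rate is $n^{-1/2}$ and not $n^{-1}$.) It follows that
\[
|\lambda^n(1-\lambda)|\le s\Bigl(1-\tfrac{a}{1-a}s^2\Bigr)^{n/2}\le s\,\e^{-\frac{a}{2(1-a)}ns^2}\le \sqrt{\tfrac{1-a}{an}}\;\sup_{x\ge0}x\e^{-x^2/2}=\sqrt{\tfrac{1-a}{\e\,a\,n}} .
\]
This yields $\|V^n(I-V)\|=O(1/\sqrt n)$.

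The main point to verify is normality of $V$, that is, that the integral of commuting unitaries against a complex measure is normal. This follows by computing $\langle W^*Wx,y\rangle$ as a double integral and using commutativity. Alternatively, one can write $W=\int_\Gamma \hat\nu(\gamma)E(d\gamma)$ via the Stone spectral measure $E(\cdot)$, in which case normality and the spectral norm formula are immediate. The scalar inequality above is elementary once it is set up in terms of $s=|1-\lambda|$.
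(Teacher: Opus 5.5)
Your proof is correct. It starts from the same decomposition as the paper, $V=\alpha I+(1-\alpha)V_1$, but finishes it by a different route.

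\textbf{The paper's route.} After the decomposition, the paper cites the Foguel--Weiss lemma \cite[Lemma 2.1]{FW}. As used there, it gives $\|(\alpha I+(1-\alpha)T)^n(I-T)\|=O(1/\sqrt n)$ for a contraction $T$ (as it applies to $V_1$). It is proved by expanding the power binomially and summing the differences of consecutive binomial weights. No normality or spectral theory is involved. So the paper's argument uses only that $V_1$ is a contraction, and applies verbatim to any weakly continuous representation by contractions on a Banach space, for instance positive isometries of $L^p$.

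\textbf{Your route.} You rely on $V$ being normal, hence on the Hilbert-space/unitary setting. You then bound $\sup_{\lambda\in\sigma(V)}|\lambda^n(1-\lambda)|$ over the disk of center $\alpha$ and radius $1-\alpha$. Your computation $|\lambda|^2\le 1-\frac{\alpha}{1-\alpha}|1-\lambda|^2$ on that disk is right. So is the resulting explicit bound $\|V^n(I-V)\|\le\sqrt{(1-\alpha)/(\e\,\alpha\,n)}$.

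\textbf{What each buys.} The paper's citation is shorter and more general. Your argument is self-contained, gives a concrete constant, and shows geometrically why the rate is $n^{-1/2}$ rather than the Ritt rate $n^{-1}$. The disk is tangent to $\mathbb T$ at $1$ with only a quadratic gap, $1-|\lambda|^2\gtrsim|1-\lambda|^2$, not the linear gap of a Stolz region. This matches Remark 2 after the proposition: for $\mu=\frac12\delta_0+\frac12\delta_1$ the spectrum fills out the boundary circle of that disk, and $V$ is not Ritt.
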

\begin{proof} Put $\alpha=\mu(\{0\})>0$. If $\alpha=1$, then $V=U(0)=I$ and 
$V^n(I-V)=0$.

Let $\alpha<1$. Put $\mu_1:=(1-\alpha)^{-1}\mu_{|G\setminus\{0\}}$ and
$V_1=\int_G U(t)d\mu_1(t)$. Then $V=\alpha I +(1-\alpha) V_1$. Since
$I-V=(1-\alpha)(I-V_1)$, by \cite[Lemma 2.1]{FW} 
$\|V^n(I-V)\|=(1-\alpha)\|V^n(I-V_1)\|=O(1/\sqrt{n})$.
\end{proof}

{\bf Remarks.} 1. If $\mu$ is adapted and $0$ is in its support, then $\mu$ is strictly 
aperiodic.
\smallskip

2. In general $V$ is not Ritt. For a simple example, let $U$ be a unitary operator
with $\sigma(U)=\mathbb T$, and $\mu=\frac12 \delta_0 +\frac12\delta_1$ on
$\mathbb Z$. Then for $V= \frac12 I+\frac12 U$, $\sigma(V)$ is the circle
 $\{\lambda: |\lambda-\frac12|=\frac12\}$, which is not contained in any Stolz region.
 Since 1 is not isolated in its spectrum, $V$ is not uniformly ergodic.

\begin{prop} \label{ac}
Let $G$ be a LCA group with dual $\Gamma$ and $\mu$ a strictly aperiodic
probability on $G$.
Let $U(t)$ be a continuous unitary representation and define $V=\int_G U(t)d\mu(t)$.
If $\mu$ is absolutely continuous ($\mu \ll m_G$), then $\|V^n(I-V)\| \to 0$.
\end{prop}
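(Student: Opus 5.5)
We will prove that $\sup_{\lambda\in\sigma(V)}|\lambda^n(1-\lambda)|\to 0$. Since $V$ is normal, $\|V^n(I-V)\|$ equals this supremum, so this suffices. By the proof of \cite[Proposition 1.9]{JRT} (Theorem \ref{jrt}), $\sigma(V)\subset \overline{\hat\mu(\Gamma)}$. It is therefore enough to show
\[
\sup_{\gamma\in\Gamma}|\hat\mu(\gamma)|^n\,|1-\hat\mu(\gamma)| \to 0 .
\]
This bound transfers to the closure by continuity of $z\mapsto z^n(1-z)$.

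The key input is the Riemann--Lebesgue lemma. Because $\mu\ll m_G$, we can write $d\mu=f\,dm_G$ with $f\in L_1(G)$. Then $\hat\mu=\hat f$ is continuous on $\Gamma$ and vanishes at infinity, by \cite{Ru1}.

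Fix $\varepsilon>0$. Choose a compact $K\subset\Gamma$ such that $|\hat\mu(\gamma)|<\varepsilon/2$ for $\gamma\notin K$. Outside $K$ we get $|\hat\mu|^n|1-\hat\mu|\le 2(\varepsilon/2)^n\le\varepsilon$ for $n\ge1$.

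Next choose an open neighbourhood $W$ of the trivial character $1$ with $|1-\hat\mu(\gamma)|<\varepsilon$ on $W$; this is possible since $\hat\mu(1)=1$ and $\hat\mu$ is continuous. On $W$ the product is then at most $\varepsilon$, because $|\hat\mu|\le1$.

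The remaining set $K\setminus W$ is compact and does not contain the trivial character. By strict aperiodicity, \cite[Theorem 3.3]{BJR0} gives $|\hat\mu(\gamma)|<1$ for $\gamma\ne1$. Continuity of $\hat\mu$ on the compact set $K\setminus W$ then gives $\sup_{K\setminus W}|\hat\mu|=r<1$, so the product there is at most $2r^n$, which tends to $0$. Combining the three regions, $\limsup_n\|V^n(I-V)\|\le\varepsilon$ for every $\varepsilon>0$, which proves the claim.

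The main obstacle is the region near infinity in $\Gamma$. Strict aperiodicity alone gives only pointwise bounds $|\hat\mu|<1$, and $|\hat\mu|$ could approach $1$ along a sequence escaping to infinity; this is precisely what can fail for singular $\mu$. Absolute continuity rules this out through Riemann--Lebesgue.

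The other point to check is that $\|V^n(I-V)\|$ is controlled by $\sigma(V)$. For this we use normality of $V$, since $V$ is a limit of combinations of commuting unitaries. Alternatively, we can bypass the spectrum altogether: write $V^n(I-V)=\int_\Gamma \hat\mu(\gamma)^n(1-\hat\mu(\gamma))\,E(d\gamma)$ via the Stone spectral measure. The norm is then bounded by $\sup_\Gamma|\hat\mu^n(1-\hat\mu)|$, with no need for Theorem \ref{jrt}.
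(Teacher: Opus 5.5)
Your proof is correct, but it is not the argument the paper gives for this proposition.

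The paper works entirely on the measure side. By Derriennic's theorem, strict aperiodicity gives $\|g*\mu^n\|_1\to 0$ for every $g\in L_1(G)$ with $\int g\,dm_G=0$. Taking $g=d(\mu-\mu^2)/dm_G$ yields $\|\mu^n-\mu^{n+1}\|_1\to 0$, and then simply $\|V^n(I-V)\|\le\|\mu^n-\mu^{n+1}\|_1$. That route never uses the Hilbert space structure or the spectrum, so it holds verbatim for any bounded continuous representation on any Banach space. On the other hand, it rests on a nontrivial external theorem and uses $\mu\ll m_G$ essentially, to place $\mu-\mu^2$ in $L_1(G)$.

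Your route is spectral. It combines the bound $\|V^n(I-V)\|\le\sup_\Gamma|\hat\mu^n(1-\hat\mu)|$ (via normality and Theorem~\ref{jrt}, or directly via the spectral measure) with Riemann--Lebesgue and the characterization $|\hat\mu(\gamma)|<1$ for $\gamma\ne 1$. It is confined to unitary representations. However, it uses absolute continuity only through $\hat\mu\in C_0(\Gamma)$, so it actually proves the statement for every strictly aperiodic $\mu\in M_0(G)$. That stronger statement is the paper's Corollary~\ref{M-0}. The paper derives it from Propositions~\ref{kt-V} and~\ref{kt} by showing $\overline{\hat\mu(\Gamma)}\cap\mathbb T=\{1\}$, using the same compactness argument that your three-region estimate makes quantitative.

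One small point: your justification of normality (``a limit of combinations of commuting unitaries'') is too quick as stated, since strong limits of normal operators need not be normal. The clean reason is that $V^*=\int_G U(-t)\,d\mu(t)$ commutes with $V$. Your spectral-measure alternative, $V^n(I-V)=\int_\Gamma\hat\mu^n(1-\hat\mu)\,dE$, avoids the issue entirely.
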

\begin{proof} It was proved by Derriennic \cite[Proposition 6]{De} that strict aperiodicity
of $\mu$ is equivalent to the convergence $\|f*\mu^n\|_1 \to 0$ for every $f \in L_1(G)$
with $\int f\,dm_G=0$. When $\mu \ll m_G$, we put $f :=d(\mu-\mu^2)/dm_G$
and obtain $ \|\mu^n-\mu^{n+1}\|_1 =  \|\mu^{n-1}*f\|_1 \to 0$. Then
$\|V^n(I-V)\|=\|\int U(t)d(\mu^n-\mu^{n+1})\| \le \|\mu^n-\mu^{n+1}\|_1 \to 0$.
\end{proof}
\medskip

The following is a very special case of the deep Katznelson-Tzafriri theorem \cite{KT}.
\begin{prop} \label{kt}
Let $T$ be a normal contraction on a complex Hilbert space. Then $\|T^n(I-T)\| \to 0$
if and only if $\sigma(T)\cap \mathbb T\subset\{1\}$.
\end{prop}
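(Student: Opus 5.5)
Since $T$ is normal, the spectral theorem gives $\|T^n(I-T)\| = \sup_{\lambda\in\sigma(T)} |\lambda^n(1-\lambda)|$. Because $T$ is a contraction, $\sigma(T)\subset\{|\lambda|\le 1\}$. The plan is to prove both directions by studying the continuous functions $f_n(\lambda)=\lambda^n(1-\lambda)$ on the compact set $K:=\sigma(T)$.

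\emph{Necessity.} Suppose some $\lambda_0\in\sigma(T)\cap\mathbb T$ with $\lambda_0\ne 1$. Then $|f_n(\lambda_0)|=|1-\lambda_0|>0$ for every $n$. Hence $\|T^n(I-T)\|\ge |1-\lambda_0|$, and the norms cannot tend to $0$.

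\emph{Sufficiency.} Assume $\sigma(T)\cap\mathbb T\subset\{1\}$ and fix $\varepsilon>0$. Let $K_\varepsilon:=\{\lambda\in K: |1-\lambda|\ge\varepsilon\}$. This set is compact and, by hypothesis, disjoint from $\mathbb T$. So $r:=\max_{K_\varepsilon}|\lambda|<1$, and on $K_\varepsilon$ we have $|f_n|\le 2r^n$. On the complement, $K\setminus K_\varepsilon$, we have $|f_n(\lambda)|\le|1-\lambda|<\varepsilon$ since $|\lambda|\le1$. Therefore $\limsup_n \sup_K|f_n|\le\varepsilon$. Letting $\varepsilon\to0$ gives $\|T^n(I-T)\|\to0$.

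The only delicate point is the sufficiency step near $1$: points of the spectrum inside the disk may accumulate at $1$, so no uniform geometric decay is available there. The splitting by $|1-\lambda|\ge\varepsilon$ handles this without any Stolz-region hypothesis. Beyond that, the argument uses only the norm identity for continuous functions of a normal operator.
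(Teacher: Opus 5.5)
Your proof is correct and follows the paper's route: both directions rest on comparing $\|T^n(I-T)\|$ with $\sup_{\lambda\in\sigma(T)}|\lambda^n(1-\lambda)|$ for normal $T$. The paper cites Dunford--Schwartz for the upper bound, and for necessity uses the spectral mapping theorem together with $\rho\le\|\cdot\|$. Your $\varepsilon$-splitting of $\sigma(T)$ also supplies the compactness argument that the paper compresses into ``which converges to zero by the assumption''; just take $r=0$ when $K_\varepsilon=\emptyset$.
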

\begin{proof} Assume $\|T^n(I-T)\| \to 0$. For $\lambda \in \sigma(T)$ we have
 by the spectral mapping theorem
$|\lambda^n(1-\lambda)| \le \rho(T^n(I-T)) \le \|T^n(I-T)\| \to 0$, 
so if $|\lambda|=1$ then $\lambda=1$.

Assume  $\sigma(T)\cap \mathbb T\subset\{1\}$. Since $T$ is normal,
 by \cite[Corollary X.2.8(ii)]{DS}
 $$
\|T^n(I-T)\| \le \sup_{\lambda \in \sigma(T)} |\lambda^n(1-\lambda)|,
$$
which converges to zero by the assumption.
\end{proof}


\begin{prop} \label{kt-V}
Let $G$ be a LCA group with dual $\Gamma$ and $\mu$ a probability on $G$, such that
$\overline{ \{\hat\mu(\gamma): \gamma \in \Gamma\} } \cap \mathbb T=\{1\}$. 
Let $U(t)$ be a continuous unitary representation and define $V=\int_G U(t)d\mu(t)$.
Then $\|V^n(I-V)\| \to 0$.
\end{prop}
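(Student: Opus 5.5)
The plan is to reduce the statement to Proposition \ref{kt}. That requires two ingredients: $V$ must be a normal contraction, and we need $\sigma(V)\cap\mathbb T\subset\{1\}$.

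\emph{Normality and contraction.} Since $\mu$ is a probability and each $U(t)$ is unitary, we have $\|V\|\le \int_G\|U(t)\|\,d\mu(t)=1$, so $V$ is a contraction. For normality, note that $V^*=\int_G U(t)^*\,d\mu(t)=\int_G U(-t)\,d\mu(t)$. The operators $U(s)$ and $U(t)$ commute because $G$ is Abelian, so $U(s)$ commutes with $V$ and with $V^*$ for every $s$. Integrating, we get $VV^*=V^*V$. Each of these operator identities is first checked weakly, on $\langle\cdot\,x,y\rangle$, by Fubini, since the integrals are defined in the strong topology.

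\emph{Spectral inclusion.} By the proof of \cite[Proposition 1.9]{JRT}, already used in Proposition \ref{UET}, we have $\sigma(V)\subset\overline{\hat\mu(\Gamma)}$. Alternatively one can use the spectral measure $E$ of the representation: $V=\int_\Gamma \hat\mu(\gamma)\,E(d\gamma)$, so the spectrum of $V$ lies in the closure of $\hat\mu(\Sigma(E))\subset\overline{\hat\mu(\Gamma)}$. The hypothesis then gives
\[
\sigma(V)\cap\mathbb T\subset\overline{\hat\mu(\Gamma)}\cap\mathbb T=\{1\}.
\]

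\emph{Conclusion.} Proposition \ref{kt} now yields $\|V^n(I-V)\|\to0$.

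The only step needing care is the spectral inclusion $\sigma(V)\subset\overline{\hat\mu(\Gamma)}$. If one does not want to quote \cite{JRT}, it can be argued as follows. Take $\lambda\notin K:=\overline{\hat\mu(\Gamma)}$. Then $f(\gamma)=(\lambda-\hat\mu(\gamma))^{-1}$ is bounded and continuous on $\Gamma$. Using the Stone representation $U(t)=\int_\Gamma\gamma(t)\,E(d\gamma)$ and Fubini, we get $V=\int_\Gamma\hat\mu\,dE$. The multiplicativity of the functional calculus for spectral measures then shows that $\int_\Gamma f\,dE$ is a bounded inverse of $\lambda I-V$.
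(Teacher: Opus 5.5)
Your proposal is correct and follows the paper's proof: $V$ is a normal contraction, $\sigma(V)\subset\overline{\hat\mu(\Gamma)}$ by \cite{JRT} (Theorem \ref{jrt}), hence $\sigma(V)\cap\mathbb T\subset\{1\}$, and Proposition \ref{kt} concludes. Your self-contained argument for the spectral inclusion, via the bounded continuous function $(\lambda-\hat\mu)^{-1}$ for $\lambda\notin\overline{\hat\mu(\Gamma)}$, is also sound. Writing $\sigma(V)\cap\mathbb T\subset\{1\}$ rather than $=\{1\}$ is exactly what Proposition \ref{kt} requires.
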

\begin{proof} Clearly $V$ is a normal contraction. By \cite{JRT} (see Theorem \ref{jrt} 
below), $\sigma(V) \subset \overline{\{\hat\mu(\gamma): \gamma\in \Gamma\}}$. 
By the assumption, $\sigma(V)\cap \mathbb T=\{1\}$, so $\|V^n(I-V)\| \to 0$ by 
Proposition \ref{kt}.
\end{proof}

{\bf Remark.} When $\mu$ is adapted, the assumption on $\mu$ in Proposition \ref{kt-V}
 implies strict  aperiodicity, but is strictly stronger.

\begin{cor} \label{M-0}
Let $G$ be a LCA group with dual $\Gamma$ and $\mu$ a strictly aperiodic
probability on $G$.
Let $U(t)$ be a continuous unitary representation and define $V=\int_G U(t)d\mu(t)$.
If $\mu \in M_0(G)$, i.e. $\hat\mu \in C_0(\Gamma)$, then $\|V^n(I-V)\| \to 0$.
\end{cor}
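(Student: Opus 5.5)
The plan is to reduce Corollary \ref{M-0} to Proposition \ref{kt-V}. It then suffices to show that
$\overline{\hat\mu(\Gamma)}\cap\mathbb T=\{1\}$ whenever $\mu$ is strictly aperiodic and $\hat\mu\in C_0(\Gamma)$.

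First, $1\in\hat\mu(\Gamma)$, since $\hat\mu(1)=\mu(G)=1$ for the trivial character. For the reverse inclusion, take $z\in\overline{\hat\mu(\Gamma)}$ with $|z|=1$, and choose $\gamma_k\in\Gamma$ with $\hat\mu(\gamma_k)\to z$.

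We split into two cases according to the behaviour of the sequence $(\gamma_k)$.

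\emph{Case 1: $(\gamma_k)$ has no subnet staying in a compact set.} Then $\gamma_k$ eventually leaves every compact subset of $\Gamma$. Since $\hat\mu\in C_0(\Gamma)$, this forces $\hat\mu(\gamma_k)\to0$. Hence $z=0$, which contradicts $|z|=1$.

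\emph{Case 2: some subnet lies in a compact set $K$.} Here $\Gamma$ may not be metrizable, so we work with nets rather than sequences. Along a further subnet we get $\gamma_k\to\gamma\in K$. By continuity of $\hat\mu$, $z=\hat\mu(\gamma)$, so $|\hat\mu(\gamma)|=1$. By the characterization of strict aperiodicity quoted from \cite[Theorem 3.3]{BJR0}, $|\hat\mu(\gamma)|<1$ for every $\gamma\ne1$. Therefore $\gamma=1$ and $z=1$.

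An alternative way to organize the same argument avoids nets: the set $\{\gamma:|\hat\mu(\gamma)|\ge 1/2\}$ is compact because $\hat\mu\in C_0(\Gamma)$. On this compact set, $\hat\mu$ attains modulus $1$ only at $\gamma=1$. Hence $\overline{\hat\mu(\Gamma)}$ splits as the compact image $\hat\mu(\{|\hat\mu|\ge1/2\})$ together with a subset of the closed disc of radius $1/2$. Its intersection with $\mathbb T$ is therefore contained in $\hat\mu(\{|\hat\mu|\ge1/2\})\cap\mathbb T=\{1\}$.

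With $\overline{\hat\mu(\Gamma)}\cap\mathbb T=\{1\}$ established, Proposition \ref{kt-V} gives $\|V^n(I-V)\|\to0$. No step appears to be a real obstacle. The only care needed is the compactness argument in the possibly non-metrizable setting, which the level-set formulation handles cleanly.
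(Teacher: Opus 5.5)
Your proposal is correct and is essentially the paper's proof. The paper takes a compact $\Delta\subset\Gamma$ with $|\hat\mu|<\frac12$ off $\Delta$, uses strict aperiodicity to get $\hat\mu(\Delta)\cap\mathbb T=\{1\}$, bounds $\overline{\hat\mu(\Gamma)}\subset\hat\mu(\Delta)\cup\overline{\hat\mu(\Gamma\setminus\Delta)}$, and then applies Proposition \ref{kt-V}. That is exactly your level-set formulation, and your net argument is an equivalent, slightly longer way of saying the same thing.
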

\begin{proof} let $\Delta$ be a compact subset of $\Gamma$ such that
$|\hat\mu(\gamma)| <\frac12$ for $\gamma \notin \Delta$. Since $\mu$ is strictly
aperiodic, the compact set $\hat\mu(\Delta)$ intersects $\mathbb T$ only at 1.
Since $\overline{\hat\mu(\Gamma)} \subset
\hat\mu(\Delta) \cup \overline{\hat\mu(\Gamma \setminus \Delta)}$, Proposition
\ref{kt-V} applies.
\end{proof}

{\bf Example 1.} {\it A strictly aperiodic continuous $\mu$ with 
$\lim_{n\to\infty} \|V^n(I-V)\|>0$.}

\noindent
Let $G=\mathbb T$, so $\Gamma=\mathbb Z$. Any continuous probability on $\mathbb T$
 is strictly aperiodic (e.g. \cite[Lemma 2.2]{CL}). Combining Theorems 5.2.2(a) and 
5.5.2(b) of \cite{Ru1}, we obtain that there exists a Cantor set $C \subset \mathbb T$ 
which is also a Kronecker set, such that every continuous probability $\mu$ supported 
on $C$ has $\overline{\hat\mu(\mathbb Z)} =\overline{\mathbb D}$. Existence of such
$\mu$ is by \cite[Remark 5.2.5]{Ru1}. When $U(t)$ is the regular representation in 
$L_2(G,dt)$,  $Vf=\mu*f$. Since $\{e_n(z):= z^n: n\in\mathbb Z\}$ is an orthonormal 
basis of eigenfunctions, $\sigma(V)=\overline{\hat\mu(\mathbb Z)}$. Hence
$\sigma(V)= \overline{\mathbb D}$, so by Proposition \ref{kt} 
$\lim_{n\to\infty} \|V^n(I-V)\|>0$.
\medskip

\begin{cor} Let $U(t)$ be a unitary representation of $G$, and let the probability $\mu$ 
satisfy, for some $C\ge 1$,
\begin{equation} \label{sf}
|1-\hat\mu(\gamma)|^2 \le C(1-|\hat\mu(\gamma)|^2) \quad 
\text{\rm for every }\ \gamma \in \Gamma.
\end{equation}
Then $V:=\int_G U(t)d\mu(t)$ satisfies $\|V^n(I-V)\| \to 0$.
\end{cor}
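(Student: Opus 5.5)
Our plan is to reduce to Proposition \ref{kt}. The operator $V$ is a normal contraction, so it suffices to show $\sigma(V)\cap\mathbb T\subset\{1\}$. By \cite{JRT} (Theorem \ref{jrt} below), $\sigma(V)\subset\overline{\hat\mu(\Gamma)}$. It is therefore enough to prove that every point $z\in\overline{\hat\mu(\Gamma)}$ with $|z|=1$ equals $1$.

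First we extend \eqref{sf} to the closure by continuity. Both sides of \eqref{sf} are continuous functions of the complex number $\hat\mu(\gamma)$. So if $z=\lim_k \hat\mu(\gamma_k)$, passing to the limit gives
$$|1-z|^2\le C(1-|z|^2).$$
Now suppose $|z|=1$. The right-hand side vanishes, so $|1-z|=0$, that is, $z=1$. Hence $\overline{\hat\mu(\Gamma)}\cap\mathbb T\subset\{1\}$.

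We then have two ways to finish. If $1\in\overline{\hat\mu(\Gamma)}$, the hypothesis of Proposition \ref{kt-V} holds exactly, and that proposition gives $\|V^n(I-V)\|\to 0$. In general, we combine $\sigma(V)\subset\overline{\hat\mu(\Gamma)}$ with the inclusion just proved and apply Proposition \ref{kt} directly. (Actually $1=\hat\mu(1_\Gamma)$ always lies in $\hat\mu(\Gamma)$, since $\mu$ is a probability, so the first route always applies.)

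There is no real obstacle here. The only points to check are that the inequality survives passage to limits, which holds because both sides depend continuously on $\hat\mu(\gamma)$, and that we do not need the strict positivity of $1-|\hat\mu|$ anywhere.
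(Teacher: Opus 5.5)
Your proof is correct and follows the same route as the paper: it shows $\overline{\hat\mu(\Gamma)}\cap\mathbb T=\{1\}$ and then invokes Proposition \ref{kt-V}, i.e.\ Theorem \ref{jrt} combined with Proposition \ref{kt}. The only difference is minor: the paper computes the closed set $\{\zeta: |1-\zeta|^2\le C(1-|\zeta|^2)\}$ explicitly as the disk centered at $1/(C+1)$ with radius $C/(C+1)$, internally tangent to $\mathbb T$ at $1$, whereas you use only that this set is closed and that the right-hand side vanishes on $\mathbb T$; your argument is quicker, but the explicit disk description is what the paper reuses in Example 2.
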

\begin{proof}
Fix $C \ge 1$. Then $\{\zeta \in \mathbb C: |1-\zeta|^2 \le C(1-|\zeta|^2)\}$ is precisely
the closed disk $D_C$ centered at $(1/(C+1),0)$ with radius $C/(C+1)$, by some computation.
Then $\overline{\hat\mu(\Gamma)}\cap \mathbb T \subset D_C \cap \mathbb T=\{1\}$, 
so $\|V^n(I-V)\| \to 0$.
\end{proof}

{\bf Remarks.} 1. Condition \eqref{sf} for $\mu$ on $\mathbb Z$ was used by Jones,
Ostrovskii and Rosenblatt in \cite[Theorem 1.8]{JOR} for proving a strong $L_2$-estimate
 for the square function of $V$ when $U(n)=U^n$, $U$ the unitary operator induced
by an invertible probability preserving transformation.

2.  When $\mu$ is adapted, \eqref{sf} implies strict aperiodicity.

3. The BAR condition \eqref{bar} implies \eqref{sf}

4.  The probability $\mu:=\frac12(\delta_0+\delta_1)$ on $\mathbb Z$ satisfies
\eqref{sf} and not \eqref{bar}.

5. If $\mu$ on $\mathbb Z$ is strictly aperiodic, then \eqref{sf} holds 
\cite[Theeorem 1.10]{JOR}.

6. In Example 1, $\mu$ does not satisfy \eqref{sf}, since $\lim_n \|V^n(I-V)\| > 0$.
\bigskip

{\bf Example 2.} {\it A strictly aperiodic $\mu$ on $\mathbb R$ which does not
satisfy \eqref{sf}.}

\noindent
Let $1, a, b, c \in \mathbb R$ be linearly independent over $\mathbb Q$. Define
$\mu:=\frac13(\delta_a+\delta_b+\delta_c)$. Then 
$\hat\mu(s)=\frac13(\e^{2\pi isa}+\e^{2\pi isb}+\e^{2\pi isc})$ for $s \in \mathbb R$.
 Fix $s$ such that $|\hat\mu(s)|=1$. By uniform convexity, 
$\e^{2\pi isa}=\e^{2\pi isb}=\e^{2\pi isc}$. If $s \ne 0$, then there exist 
$\theta \in[0,1)$ and distinct integers $n_1,n_2,n_3$ such that
$$
as= n_1+ \theta, \quad bs= n_2+ \theta, \quad cs= n_3+ \theta.
$$
Then $(b-a)s=(n_2-n_1)$ and $(c-a)s=(n_3-n_1)$. Equating the values of $s$ we
obtain the equality $a(n_3-n_2) +b(n_1-n_3)+c(n_2-n_1)=0$, contradicting the linear
independence. Thus $|\hat\mu(s)|=1$ implies $s=0$, so $\mu$ is strictly aperiodic.
\smallskip

We now prove that $\overline{\hat\mu(\mathbb R)}\cap \mathbb T=\mathbb T$.
Fix $z = \e^{2\pi is_0} \in \mathbb T$. By Kronecker's density theorem 
\cite[Theorem 443]{HW}, for $\epsilon>0$  there is a positive integer $n$ such that
$$
\max\{| {\rm e}^{2\pi ina} -z|, | {\rm e}^{2\pi inb} -z|, | {\rm e}^{2\pi inc} -z|\}
<\epsilon.
$$
hence $|\hat\mu(n)-z|\le
 \frac13(| {\rm e}^{2\pi ina} -z|+| {\rm e}^{2\pi inb} -z|+| {\rm e}^{2\pi inc} -z| ) <
 \epsilon$,
so $z\in \overline{\hat\mu(\mathbb R)}$. This shows that $\hat\mu(\mathbb R)$
is not contained in any disk centered at $(\rho,0)$ with radius $1-\rho$, $\rho>0$,
so \eqref{sf} fails.
\medskip

{\bf Remark.} Let $U(t)=\e^{itB}$ be the representation of $\mathbb R$ induced on
$L_2(m)$ by an ergodic measure preserving action of $\mathbb R$ on a $\sigma$-finite
 measure space $(\Omega,m)$. Then the support of the spectral measure of $B$
is $\mathbb R$ \cite[Corollaire 2]{Fo}, and for $V=\int_\mathbb R U(t)d\mu(t)$ we 
have $\sigma(V)=\overline{\hat\mu(\mathbb R)}$ (see Section \ref{R}). Hence for
$\mu$ of Example 2, $\sigma(V)\cap \mathbb T=\mathbb T$, so
$\lim_n \|V^n(I-V)\| >0$.
\bigskip

\section{ Spectra of averages of unitrary representations } \label{spectra}

%

\subsection{Spectra of averages}

We present a different proof of the general Theorem \ref{jrt} below, which
 was proved originally in \cite[Proposition 1.9]{JRT}. 

\begin{thm} \label{jrt}
Let $G$ be a LCA group with dual $\Gamma$, and let $U(t), t\in G$, be a weakly continuous
unitary representation of $G$ on a complex Hilbert space $H$. For a regular probability $\mu$
on $G$ define $V:= \int_G U(t) d\mu(t)$ (which is defined in the strong operator topology
of $H$). Then $\sigma(V) \subset \overline{\{\hat\mu(\gamma): \gamma\in \Gamma\}}$.
\end{thm}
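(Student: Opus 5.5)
We will use the spectral measure $E(\cdot)$ of the representation given by the general Stone theorem: there is a regular projection-valued measure $E$ on the Borel sets of $\Gamma$ with $U(t)=\int_\Gamma \gamma(t)\,E(d\gamma)$, in the weak sense $\langle U(t)x,y\rangle=\int_\Gamma \gamma(t)\,dE_{x,y}(\gamma)$. (A weakly continuous unitary representation is automatically strongly continuous, so this applies.)

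\textbf{Step 1: functional calculus form of $V$.} Integrate against $\mu$ and use Fubini. This is legitimate because $|\gamma(t)|=1$, $\mu$ is a probability, and $E_{x,y}$ is a bounded complex measure. We obtain
$$\langle Vx,y\rangle=\int_G\int_\Gamma \gamma(t)\,dE_{x,y}(\gamma)\,d\mu(t)=\int_\Gamma \hat\mu(\gamma)\,dE_{x,y}(\gamma).$$
Hence $V=\Psi(\hat\mu):=\int_\Gamma \hat\mu\,dE$. Here $\hat\mu$ is a bounded continuous function on $\Gamma$ with $|\hat\mu|\le1$, and the joint measurability of $(t,\gamma)\mapsto\gamma(t)$ follows from its joint continuity.

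\textbf{Step 2: resolvent outside $K:=\overline{\hat\mu(\Gamma)}$.} Fix $\lambda\notin K$ and put $\delta:=\operatorname{dist}(\lambda,K)>0$. The function $g(\gamma):=(\lambda-\hat\mu(\gamma))^{-1}$ is bounded continuous on $\Gamma$ with $|g|\le 1/\delta$. Define $R:=\Psi(g)$. By the multiplicativity of the bounded Borel functional calculus of a spectral measure, $\Psi(fh)=\Psi(f)\Psi(h)$ and $\Psi(1)=I$, as in Rudin's Functional Analysis, Ch. 12. It follows that
$$(\lambda I-V)R=R(\lambda I-V)=\Psi\bigl((\lambda-\hat\mu)g\bigr)=I.$$
Therefore $\lambda\notin\sigma(V)$, with $\|R\|\le 1/\delta$. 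This gives $\sigma(V)\subset K$.

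\textbf{Main obstacle.} The only delicate point is Step 1: the identification $V=\Psi(\hat\mu)$ and the justification of the interchange of integrals in the weak sense. Once that is in place, the inclusion is the standard fact that $\sigma(\Psi(f))$ lies in the closure of the range of $f$. I would verify the Fubini step using total variation bounds, namely $|E_{x,y}|(\Gamma)\le\|x\|\|y\|$.
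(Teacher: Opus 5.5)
Your proof is correct. Step 1 is the same as the paper's: both use the Stone--Ambrose spectral measure and Fubini to get $\langle Vx,y\rangle=\int_\Gamma\hat\mu\,dE_{x,y}$, i.e.\ $V=\Psi(\hat\mu)$.

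The routes diverge only at the last step. The paper pushes $E$ forward under $\hat\mu$, setting $E'(A)=E(\hat\mu^{-1}A)$ for Borel $A\subset\overline{\mathbb D}$. Change of variables then gives $V=\int_{\overline{\hat\mu(\Gamma)}}\lambda\,E'(d\lambda)$. The paper then invokes uniqueness of the spectral measure of the normal operator $V$, together with the fact that this measure is supported exactly on $\sigma(V)$; since $E'(\overline{\hat\mu(\Gamma)})=I$, the inclusion follows. You instead invert $\lambda I-V$ explicitly as $\Psi\bigl((\lambda-\hat\mu)^{-1}\bigr)$, using only that $\Psi$ is a unital algebra homomorphism on bounded Borel functions.

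What each route buys:
\begin{itemize}
\item Your argument is more elementary, since it needs no uniqueness theorem for spectral measures of normal operators. It also gives the quantitative bound $\|(\lambda I-V)^{-1}\|\le 1/\operatorname{dist}(\lambda,\overline{\hat\mu(\Gamma)})$. It is close in spirit to the operational-calculus proof that the paper cites in Remark 2 after the theorem.
\item The paper's route identifies the spectral measure of $V$ as $E\circ\hat\mu^{-1}$. That identification is what the paper reuses to obtain the reverse inclusions in Propositions \ref{sigma-E} and \ref{sigma-V}. Your resolvent construction on its own yields only the one inclusion.
\end{itemize}
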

\begin{proof} By the generalized Stone spectral theorem for unitary representations of
 LCA groups (see \cite{Am}), there exists a "regular" projection-valued spectral measure 
$E(\cdot)$ on the Borel subsets of $\Gamma$, such that 
\begin{equation} \label{ambrose}
U(t)=\int_\Gamma \gamma(t)E(d\gamma).
\end{equation}
By definition, $m_x(\cdot):=\langle E(\cdot)x,x \rangle$ is a regular positive finite
measure, and  $m_{x,y}(\cdot):=\langle E(\cdot)x,y \rangle$ is a regular signed measure.
We need only the well-defined weak spectral representation
$$
\langle U(t)x,y\rangle= \int_\Gamma \gamma(t) \langle E(d\gamma),y\rangle =
\int_\Gamma \gamma(t) m_{x,y}(d\gamma), \quad x,y \in H.
$$
Then $V$ is well-defined weakly, and  for any  $x,y \in H$ we have
$$
\langle V x,y \rangle = \int_G \langle U(t)x,y \rangle d\mu(t)=
	\int_G \big[\int_\Gamma \gamma(t) m_{x,y}(d\gamma)\big] d\mu(t).
$$
By Fubini's theorem, 
\begin{equation} \label{fubini}
\langle V x,y \rangle = \int_\Gamma \big[\int_G \gamma(t)d\mu(t) \big]  m_{x,y}(d\gamma)
=\int_\Gamma \hat\mu(\gamma) m_{x,y}(d\gamma).
\end{equation}
Since $\hat\mu: \Gamma\mapsto \overline{\mathbb D}$ is continuous, we can define
$E'(A)=E(\hat\mu^{-1}A)$ for Borel sets $A\subset \overline{\mathbb D}$. Then $E'(\cdot)$
is a spectral measure on the Borel sets of $\overline{\mathbb D}$. The change of variables formula
then yields
\begin{equation} \label{change-of-var}
\langle V x,y \rangle =\int_\Gamma \hat\mu(\gamma) m_{x,y}(d\gamma)=
\int_{\overline{\mathbb D}} \lambda \langle E'(d\lambda)x,y \rangle =
\int_{\overline{\hat\mu(\Gamma)}} \lambda \langle E'(d\lambda)x,y \rangle,
\end{equation}
the last equality since $E'(\overline{\hat\mu(\Gamma)})=E(\Gamma)=I$. Hence
$$
V=\int_{\overline{\hat\mu(\Gamma)}} \lambda E'(d\lambda).
$$
By uniqueness of the spectral measure of the normal operator $V$, $E'$ is its spectral 
measure, supported on the spectrum $\sigma(V)$, so 
$\sigma(V) \subset \overline{\hat\mu(\Gamma)}$.
\end{proof}

{\bf Remarks.} 1. Our proof does not require separability nor metrizability of $G$, which
are assumed in \cite{JRT}.

2. A different proof of Theorem \ref{jrt}, also based on the general spectral theorem,
but using the operational calculus \cite[Theorem 2(b)]{Am}, was presented in the proof
of \cite[Proposition 5.3]{CCL}. The advantage of the present proof is that it shows
the spectral measure of $V$ as $E'(\cdot)=E(\hat\mu^{-1}(\cdot))$, which is used  below.

3. A bounded continuous representation of a LCA $G$ by operators on $H$ is similar 
to a unitary representation \cite[Th\'eor\`eme 6]{Di}.

4. Theorem \ref{jrt} is improved in Corollary \ref{wsmt} below.

\begin{cor}[{\cite{CCL}}] \label{H-ritt}
The normal contraction $V$ of Theorem \ref{jrt} is Ritt if  there exists $C>0$ such that
\begin{equation} \label{BAR}
|1-\hat\mu(\gamma)| \le C(1-|\hat\mu(\gamma)|) \quad \forall \gamma \in \Gamma.
\end{equation}
\end{cor}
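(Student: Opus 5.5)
The plan is to reduce the claim to Proposition \ref{stolz}: a normal contraction on a complex Hilbert space is Ritt exactly when its spectrum lies in a closed Stolz region. First, $V$ is a normal contraction. By \eqref{change-of-var}, $V=\int_{\overline{\hat\mu(\Gamma)}}\lambda\,E'(d\lambda)$ is an integral against a spectral measure, hence normal. Since $\mu$ is a probability and each $U(t)$ is unitary, $\|V\|\le 1$. By Theorem \ref{jrt}, $\sigma(V)\subset\overline{\hat\mu(\Gamma)}$, so it suffices to show that $\overline{\hat\mu(\Gamma)}$ lies in a closed Stolz region.

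Condition \eqref{BAR} holds at every $\gamma$ and both sides are continuous in $\zeta=\hat\mu(\gamma)$. Hence it passes to the closure: every $\zeta\in K:=\overline{\hat\mu(\Gamma)}$ satisfies $|1-\zeta|\le C(1-|\zeta|)$.

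The main step is elementary planar geometry. We show that $\Delta_C:=\{\zeta\in\overline{\mathbb D}: |1-\zeta|\le C(1-|\zeta|)\}$ is contained in a closed Stolz region. This is the converse of the property quoted from \cite{Pr}.
\begin{itemize}
\item If $C<1$, then $|1-\zeta|\ge 1-|\zeta|$ forces $\zeta=1$, which is trivially in any Stolz region. So assume $C\ge1$.
\item Bounded modulus away from $1$: for $\zeta\in\Delta_C$ with $|1-\zeta|\ge\delta$, we get $|\zeta|\le 1-\delta/C$.
\item Angle control near $1$: write $\zeta=1-w$. Then $|\zeta|\ge 1-\operatorname{Re} w - O(|w|^2)$, so $1-|\zeta|\le \operatorname{Re} w+O(|w|^2)$. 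Combined with $|w|\le C(1-|\zeta|)$ this gives $\operatorname{Re} w\ge |w|/(2C)$ for small $|w|$. That is, $\zeta$ lies in a sector at $1$ with half-angle $\arccos(1/(2C))<\pi/2$.
\end{itemize}
A Stolz region with radius $r$ close enough to $1$ contains a neighborhood of $1$ inside such a sector, because its tangent half-angle at $1$ tends to $\pi/2$ as $r\to1$. It also contains the disk $|\zeta|\le r$. Choosing $\delta$ small and then $r$ close to $1$ with $r\ge 1-\delta/C$ covers both parts of $\Delta_C$.

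Then $\sigma(V)\subset K\subset\Delta_C\subset$ a closed Stolz region, and Proposition \ref{stolz} gives that $V$ is Ritt. The only delicate point is the geometric inclusion, particularly matching the sector angle and the disk radius. I expect this to be routine once the first-order expansion of $|1-w|$ near $w=0$ is written out carefully.
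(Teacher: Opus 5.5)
Your proposal is correct and follows the paper's proof: Theorem \ref{jrt} together with \eqref{BAR}, extended to the closure, places $\sigma(V)$ in a closed Stolz region, and Proposition \ref{stolz} then makes the normal contraction $V$ Ritt; the paper simply asserts the geometric inclusion that you verify. In that verification, use that Stolz regions increase with $r$, and make the choices in this order: first fix $r_0$ with $\arcsin r_0\ge\arccos(1/(2C))$ together with the sector neighborhood of $1$ that $r_0$ provides, then choose $\delta$, then choose $r\ge\max(r_0,1-\delta/C)$.
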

\begin{proof} The BAR condition \eqref{BAR} with Theorem \ref{jrt} imply that 
$\sigma(V)$ is contained in a closed Stolz region; since $V$ is normal, it is Ritt by 
Proposition \ref{stolz}.
\end{proof}

{\bf Remark.} Lancien and Le Merdy \cite[Theorem 6]{LLM} proved Corollary \ref{H-ritt}
for bounded continuous representations of $G$ on UMD Banach lattices, in particular on
$L^p$ spaces, $1<p<\infty$.  They do not consider the spectrum in general.
\medskip

{\bf Definition.} The {\it support} of a projection valued spectral measure $E(\cdot)$
defined on the Borel sets of $\Gamma$, denoted $\Sigma(E)$, is the set of all points  
$\gamma \in \Gamma$ such that $E(O) \ne 0$ for every open set $O$ containing 
$\gamma$. It is easily shown that $\Sigma(E)$ is closed, and its complement is the
union of all open sets $O$ with $E(O)=0$. Hence $\Gamma \setminus \Sigma(E)$ is
the maximal open set $O$ with $E(O)=0$. Consequently, every closed set $\Delta$
with $E(\Delta)=I$ contains $\Sigma(E)$.


\begin{prop} \label{sigma-E}
Let $U(t)$, $\mu$ and $V$ be as in Theorem \ref{jrt}, and $E(\cdot)$ the spectral 
measure of $U(t)$. Let $\Sigma(E) \subset \Gamma$ be the (closed) support of $E(\cdot)$.
If $\mu$ is adapted, then $\sigma(V) = \overline{\hat\mu(\Sigma(E))}:=
\overline{\{\hat\mu(\gamma): \gamma\in \Sigma(E)\}}$.
\end{prop}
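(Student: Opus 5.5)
The plan is to reuse the identification obtained in the proof of Theorem \ref{jrt}: the spectral measure of the normal operator $V$ is $E'(A)=E(\hat\mu^{-1}A)$ for Borel $A\subset\overline{\mathbb D}$. For a normal operator, $\sigma(V)$ equals the (closed) support of its spectral measure. So the statement reduces to showing that the support of $E'$ is $\overline{\hat\mu(\Sigma(E))}$, and I would prove the two inclusions separately.

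\emph{Inclusion $\sigma(V)\subset\overline{\hat\mu(\Sigma(E))}$.} Put $F:=\overline{\hat\mu(\Sigma(E))}$, a closed subset of $\overline{\mathbb D}$. Then $\hat\mu^{-1}(F)\supset\Sigma(E)$, so
$$E'(F)=E(\hat\mu^{-1}F)\ge E(\Sigma(E)).$$
It therefore suffices to know that $E(\Sigma(E))=I$, i.e.\ $E(\Gamma\setminus\Sigma(E))=0$. Once $E'(F)=I$ with $F$ closed, the support of $E'$, which is $\sigma(V)$, is contained in $F$.

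\emph{Inclusion $\overline{\hat\mu(\Sigma(E))}\subset\sigma(V)$.} Let $\gamma_0\in\Sigma(E)$, put $\lambda_0=\hat\mu(\gamma_0)$, and fix $\epsilon>0$. Since $\hat\mu$ is continuous on $\Gamma$, the set $O:=\hat\mu^{-1}(\{|\lambda-\lambda_0|<\epsilon\})$ is open and contains $\gamma_0$. Hence, by the definition of $\Sigma(E)$,
$$E'(\{|\lambda-\lambda_0|<\epsilon\})=E(O)\neq0 .$$
As $\epsilon$ is arbitrary, $\lambda_0$ lies in the support of $E'$, which is $\sigma(V)$. Since $\sigma(V)$ is closed, taking closures gives the inclusion.

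The main obstacle is the claim $E(\Gamma\setminus\Sigma(E))=0$ when $\Gamma$ is not second countable. In that case the complement is a possibly uncountable union of open $E$-null sets, and countable additivity alone does not suffice. I would use the regularity of the measures $m_x=\langle E(\cdot)x,x\rangle$. Let $W=\Gamma\setminus\Sigma(E)$. By inner regularity on open sets, $m_x(W)=\sup\{m_x(K): K\subset W \text{ compact}\}$. Each such compact $K$ is covered by finitely many open sets $O_j$ with $E(O_j)=0$, so $m_x(K)\le\sum_j m_x(O_j)=0$. Hence $\|E(W)x\|^2=m_x(W)=0$ for every $x\in H$, so $E(W)=0$. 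This is the one point where the regularity in Stone's theorem \cite{Am} is genuinely needed.

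The adaptedness of $\mu$ does not appear to be used anywhere in this argument. I would double-check whether it is needed. If it is not, I would note that the equality holds for every probability $\mu$, and in fact the same argument applies verbatim to any $\nu\in M(G)$.
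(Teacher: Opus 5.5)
Your proof is correct and follows essentially the paper's route. You identify $E'=E\circ\hat\mu^{-1}$ as the spectral measure of $V$ and use $E'(\overline{\hat\mu(\Sigma(E))})\ge E(\Sigma(E))=I$ for one inclusion; for the other you note that preimages under $\hat\mu$ of neighbourhoods of $\hat\mu(\gamma_0)$, with $\gamma_0\in\Sigma(E)$, are non-null open sets (the paper phrases this as $\Sigma(E)\subset\hat\mu^{-1}\sigma(V)$). Your inner-regularity argument supplies the fact $E(\Sigma(E))=I$, which the paper only asserts in its definition of the support.

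Your suspicion about adaptedness is right. The paper uses it only to claim that $\hat\mu$ is one-to-one, and that claim is both unnecessary (since $\hat\mu^{-1}(\hat\mu(S))\supset S$ always) and false in general: $\mu=\frac12(\delta_{-1}+\delta_1)$ on $\mathbb Z$ is adapted but gives $\hat\mu(z)=\mathrm{Re}\,z$ for $z\in\mathbb T$. Consistently with this, Corollary \ref{wsmt} later proves the equality for every $\nu\in M(G)$.
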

\begin{proof}  Put $\sigma=\sigma(V)$; since $E'=E\circ \hat\mu^{-1}$ is the spectral 
measure of $V$, $E(\hat\mu^{-1}\sigma)=E'(\sigma)=I$.  Since $\sigma$ is closed 
and $\hat\mu$ is continuous, $\hat\mu^{-1}\sigma \supset \Sigma(E)$, and then  
$\sigma \supset \overline{\hat\mu(\Sigma(E))}$.

Since $\mu$ is adapted, $\hat\mu(\gamma)=1$ only when $\gamma=1$,
which implies the $\hat\mu$ is one-to-one. Hence
 $E'(\overline{\hat\mu(\Sigma(E))}) \supset E(\Sigma(E)) =I$.
Then, by \eqref{change-of-var} in the proof of  Theorem \ref{jrt}, we actually have 
$$
\langle Vx,y\rangle =\int_{\Sigma(E)} \hat\mu(\gamma)m_{x,y}(d\gamma) =
\int_{\overline{\hat\mu(\Sigma(E))}} \lambda \langle (E'(d\lambda)x,y\rangle.
$$
Since the support of the spectral measure of $V$ is $\sigma$, we have
$\sigma \subset \overline{\hat\mu(\Sigma(E))}$.

We conclude that $\sigma= \overline{\hat\mu(\Sigma(E))}$.
\end{proof}

{\bf Remark.} When $\mu$ is not adapted, we can look at the restriction of the representation
 to the closed subgroup $G_1$ generated by the support of $\mu$. Then $\mu$ is adapted
on $G_1$ and Proposition \ref{sigma-E} can be applied.

\smallskip

When $\Sigma(E)=\Gamma$, we can identify $\sigma(V)$ for every $\mu$, even if it is not 
adapted.

\begin{prop} \label{sigma-V}
Let $U(t)$, $\mu$ and $V$ be as in Theorem \ref{jrt}, and $E(\cdot)$ the spectral 
measure of $U(t)$. If $E(O) \ne 0$ for any non-empty open $O \subset \Gamma$, then 
 $\sigma(V) =\overline{\hat\mu(\Gamma)}=
\overline{\{\hat\mu(\gamma): \gamma\in \Gamma\}}$.
\end{prop}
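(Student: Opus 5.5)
The inclusion $\sigma(V)\subset\overline{\hat\mu(\Gamma)}$ is already Theorem \ref{jrt}, so only the reverse inclusion needs proof. Since $\sigma(V)$ is closed, it suffices to show that $\hat\mu(\gamma_0)\in\sigma(V)$ for every $\gamma_0\in\Gamma$. We will not use adaptedness; we use only the spectral measure $E'=E\circ\hat\mu^{-1}$ of $V$ obtained in the proof of Theorem \ref{jrt}, together with the hypothesis that $E$ charges every non-empty open set.

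Fix $\gamma_0\in\Gamma$ and put $\lambda_0=\hat\mu(\gamma_0)$. Let $\epsilon>0$ and consider the open disk $D_\epsilon=\{\lambda:|\lambda-\lambda_0|<\epsilon\}$. Because $\hat\mu$ is continuous on $\Gamma$, the set $O_\epsilon:=\hat\mu^{-1}(D_\epsilon)$ is open, and it is non-empty because it contains $\gamma_0$. The hypothesis then gives $E'(D_\epsilon)=E(O_\epsilon)\ne 0$. Choose a unit vector $x\in E'(D_\epsilon)H$. The functional calculus for the normal operator $V=\int\lambda\,E'(d\lambda)$ yields
$$\|(V-\lambda_0 I)x\|^2=\int_{D_\epsilon}|\lambda-\lambda_0|^2\,\langle E'(d\lambda)x,x\rangle\le\epsilon^2 .$$
Equivalently, by \eqref{fubini}, this integral equals $\int_{O_\epsilon}|\hat\mu(\gamma)-\lambda_0|^2\,m_x(d\gamma)$. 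Thus $\lambda_0$ is an approximate eigenvalue of $V$, so $\lambda_0\in\sigma(V)$.

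An alternative route avoids the approximate-eigenvector estimate. Since $E'$ is the spectral measure of $V$ and is supported on $\sigma(V)$, we have $E'(\mathbb C\setminus\sigma(V))=0$. If $\lambda_0\notin\sigma(V)$, some disk $D_\epsilon$ would be disjoint from $\sigma(V)$, forcing $E(\hat\mu^{-1}D_\epsilon)=0$, which contradicts the hypothesis.

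The only delicate step is justifying that $E'$ is genuinely the spectral measure of $V$. This was established in the proof of Theorem \ref{jrt} through the uniqueness of the spectral measure of a normal operator, so it can be quoted directly. Everything else reduces to the continuity of $\hat\mu$.
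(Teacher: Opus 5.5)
Your proof is correct. Your "alternative route" is essentially the paper's argument. There, one uses that $E'=E\circ\hat\mu^{-1}$ is the spectral measure of $V$, so $E'(\sigma(V))=I$ and hence $E(\Gamma\setminus\hat\mu^{-1}(\sigma(V)))=0$. This set is open, so the hypothesis forces it to be empty, i.e.\ $\hat\mu(\Gamma)\subset\sigma(V)$; this is a global form of your disk argument.

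Your main route is genuinely different: you build approximate eigenvectors from the nonzero ranges of $E(\hat\mu^{-1}(D_\epsilon))$. This needs neither the uniqueness of the spectral measure of $V$ nor the fact that a normal operator's spectral measure is carried by its spectrum. It uses only $V=\Psi(\hat\mu)$ (equivalently $V=\int\lambda\,E'(d\lambda)$) and the norm identity $\|\Psi(f)x\|^2=\int_\Gamma|f|^2\,dm_x$. That identity is a functional-calculus fact. The passage between the $E'$- and $E$-integrals is the change of variables, not \eqref{fubini}. So your closing remark about the "delicate step" concerns only the alternative route.

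The approximate-eigenvector argument also localizes. Take $\gamma_0\in\Sigma(E)$ and any bounded complex measure $\nu$ in place of $\mu$. Then the sets $\hat\nu^{-1}(D_\epsilon)$ are open neighborhoods of $\gamma_0$, hence have nonzero $E$-measure. This gives $\overline{\hat\nu(\Sigma(E))}\subset\sigma(U(\nu))$ with no full-support hypothesis, which is one inclusion of Corollary \ref{wsmt}, without using the essential-range description of $\sigma(\Psi(f))$. The paper's version is shorter only because it reuses the identification of $E'$ already made in the proof of Theorem \ref{jrt}.
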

\begin{proof} Put $\sigma=\sigma(V)$, and let $A=\overline{\hat\mu(\Gamma)}\setminus \sigma$.
Since $E'=E\circ \hat\mu^{-1}$ is the spectral measure of $V$, $E'(\sigma)=I$; 
as shown above, $E'(\overline{\hat\mu(\Gamma))}=I$. 
By Theorem \ref{jrt} $\sigma \subset \overline{\hat\mu(\Gamma)}$, so 
$0=E'(A)= E(\Gamma \setminus \hat\mu^{-1} \sigma)$. Since $\sigma$ is closed, 
$\Gamma \setminus \hat\mu^{-1} \sigma$ is open in $\Gamma$. The assumption yields
$\hat\mu^{-1} \sigma =\Gamma$, so $\hat\mu(\Gamma) \subset \sigma$. Since $\sigma$ is 
closed,  also $\overline{\hat\mu(\Gamma)} \subset \sigma$.
\end{proof}

\begin{cor}
Under all the assumptions of Proposition \ref{sigma-V}, if  $\mu$ is adapted and
$\|V^n(I-V)\| \to 0$, then $\mu$ is strictly aperiodic.
\end{cor}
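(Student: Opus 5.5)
We use the characterization recalled in the subsection on strict aperiodicity: $\mu$ is strictly aperiodic if and only if $|\hat\mu(\gamma)|<1$ for every $1\ne\gamma\in\Gamma$ (\cite[Theorem 3.3]{BJR0}). So it suffices to show that $|\hat\mu(\gamma)|<1$ for every $\gamma\ne 1$. We argue by contradiction and suppose there is some $\gamma_0\ne 1$ with $|\hat\mu(\gamma_0)|=1$.

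First we locate $\hat\mu(\gamma_0)$ in the spectrum. Under the assumption of Proposition \ref{sigma-V} that $E(O)\ne0$ for all non-empty open $O$, that proposition gives $\sigma(V)=\overline{\hat\mu(\Gamma)}$. In particular
$$\lambda_0:=\hat\mu(\gamma_0)\in\sigma(V)\cap\mathbb T .$$
Next we use the hypothesis $\|V^n(I-V)\|\to0$. Since $V$ is a normal contraction, Proposition \ref{kt} gives $\sigma(V)\cap\mathbb T\subset\{1\}$. (Directly: by the spectral mapping theorem, $|\lambda_0^n(1-\lambda_0)|\le\|V^n(I-V)\|\to0$, and $|\lambda_0|=1$ forces $\lambda_0=1$.) Hence $\hat\mu(\gamma_0)=1$.

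Finally, $\mu$ is adapted, so $\hat\mu(\gamma)\ne1$ for every $1\ne\gamma\in\Gamma$, as recalled before Proposition \ref{foguel-weiss}. This contradicts $\hat\mu(\gamma_0)=1$ with $\gamma_0\ne1$. Therefore $|\hat\mu(\gamma)|<1$ for all $\gamma\ne1$, and $\mu$ is strictly aperiodic.

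There is no real obstacle; each step is a direct citation. The only point needing care is that the full-support hypothesis on $E$ is genuinely required. It is what upgrades the inclusion of Theorem \ref{jrt} to the equality $\sigma(V)=\overline{\hat\mu(\Gamma)}$, and so lets a point $\gamma_0$ with $|\hat\mu(\gamma_0)|=1$ produce a spectral value of $V$ on the unit circle.
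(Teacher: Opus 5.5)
Your proof is correct and follows the paper's own argument. Proposition \ref{sigma-V} places every $\hat\mu(\gamma)$ in $\sigma(V)$, Proposition \ref{kt} forces $\sigma(V)\cap\mathbb T\subset\{1\}$, adaptedness excludes $\hat\mu(\gamma)=1$ for $\gamma\ne 1$, and the criterion $|\hat\mu(\gamma)|<1$ for all $\gamma\ne1$ then gives strict aperiodicity.
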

\begin{proof} By Proposition \ref{kt} $\sigma(V)\cap \mathbb T=\{1\}$. Since every 
$\hat\mu(\gamma)$ is in $\sigma(V)$ by Proposition \ref{sigma-V},
$|\hat\mu(\gamma)|=1$ implies $\hat\mu(\gamma)=1$, and since $\mu$ is adapted
$\gamma=1$. Thus $|\hat\mu(\gamma)|<1$ for $\gamma \ne 1$.
\end{proof}

\begin{cor}
Let $U(t)$, $\mu$ and $V$ be as in Theorem \ref{jrt}. Assume that $E(\cdot)$, the spectral 
measure of $U(\cdot)$, satisfies $E(O) \ne 0$ for any non-empty open $O \subset \Gamma$. 
Then $V$ is Ritt if and only if $\mu$ satisfies the BAR condition \eqref{BAR}.
\end{cor}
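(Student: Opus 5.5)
Both directions go through the equality $\sigma(V)=\overline{\hat\mu(\Gamma)}$, which Proposition \ref{sigma-V} gives under the full-support assumption on $E(\cdot)$, together with the characterization of Ritt normal contractions in Proposition \ref{stolz}. Since $V$ is a normal contraction, $V$ is Ritt if and only if $\overline{\hat\mu(\Gamma)}$ lies in some closed Stolz region. So the corollary reduces to a purely geometric statement about subsets of $\overline{\mathbb D}$: a set $K\subset\overline{\mathbb D}$ is contained in a closed Stolz region if and only if there is a $C$ with $|1-\zeta|\le C(1-|\zeta|)$ for all $\zeta\in K$. We apply this with $K=\overline{\hat\mu(\Gamma)}$.

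\emph{Sufficiency.} This is Corollary \ref{H-ritt}, which uses only the inclusion of Theorem \ref{jrt}; the full-support hypothesis is not needed here.

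\emph{Necessity.} Suppose $V$ is Ritt. By Proposition \ref{stolz}, $\sigma(V)$ is contained in a closed Stolz region $\Delta$, and by Proposition \ref{sigma-V}, $\hat\mu(\gamma)\in\sigma(V)\subset\Delta$ for every $\gamma\in\Gamma$. The fact quoted from \cite{Pr} then gives a constant $C$ with $|1-\zeta|\le C(1-|\zeta|)$ for all $\zeta\in\Delta$. Taking $\zeta=\hat\mu(\gamma)$ yields \eqref{BAR} for every $\gamma$.

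The only point needing care is that the Stolz-region inequality holds at every point of $\Delta$, including $\zeta=1$, where both sides vanish. The Stolz region is the convex hull of the disk of radius $r<1$ and the point $1$, so it meets $\mathbb T$ only at $1$. Points with $|\zeta|=1$ and $\zeta\neq1$ are therefore excluded, and the inequality is consistent there. For the converse geometric direction, \eqref{BAR} at a point $\zeta$ forces $\zeta$ into the region between the two lines through $1$ making angle $\arccos(1/C)$ with the real axis, intersected with $\overline{\mathbb D}$. That set is contained in a Stolz region. This step is already packaged inside Corollary \ref{H-ritt}, so nothing new is required.

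I therefore expect no real obstacle. The substantive input is Proposition \ref{sigma-V}, which upgrades the inclusion of Theorem \ref{jrt} to an equality so that every value $\hat\mu(\gamma)$ actually lies in $\sigma(V)$. Without it, the necessity direction would fail.
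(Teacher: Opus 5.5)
Your proof is correct and is the paper's argument: sufficiency is Corollary~\ref{H-ritt}, and necessity combines Proposition~\ref{stolz}, the inclusion $\hat\mu(\Gamma)\subset\sigma(V)\subset\Delta$ from Proposition~\ref{sigma-V}, and the inequality $|1-\zeta|\le C(1-|\zeta|)$ on the Stolz region $\Delta$. One aside in your last paragraph is wrong, though your proof does not use it: the cone at $1$ of half-angle $\arccos(1/C)$ intersected with $\overline{\mathbb D}$ contains points of $\mathbb T$ other than $1$ (for instance $-1$), so it lies in no Stolz region. What is true, and what Corollary~\ref{H-ritt} needs, is that the smaller set $\{\zeta : |1-\zeta|\le C(1-|\zeta|)\}$ lies in a Stolz region, because its points also satisfy $|\zeta|\le 1-|1-\zeta|/C$.
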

\begin{proof}
The "if" part is Corollary \ref{H-ritt}.

When $V$ is Ritt, its spectrum is contained in a Stolz region $\Delta$. Then there is a $C$ 
such that $|1-\zeta| \le C(1-|\zeta|)$  for every $\zeta \in \Delta$. By Proposition 
\ref{sigma-V} $\hat\mu(\Gamma) \subset \sigma(V) \subset \Delta$, which yields \eqref{BAR}.
\end{proof}
\medskip

\subsection{Spectral mapping theorems}

Let $E(\cdot)$ be a regular spectral measure on the Borel sets of $\Gamma$. Then for 
every bounded measurable function $f: \Gamma \longrightarrow \mathbb C$ a bounded 
linear operator $\int_\Gamma f\,dE$ can be defined by 
$$
\langle \int_\Gamma f\,dE\,x,y \rangle :=
\int_\Gamma f(\gamma)\langle E(d\gamma)x,y \rangle, \qquad x,y \in H.
$$
This is extended in \cite[Theorem 13.24]{Ru} to unbounded functions: for every
measurable  $f: \Gamma \longrightarrow \mathbb C$ there is a densely defined closed
operator $\Psi (f)$ such that
$$
\langle \Psi (f)x,y \rangle = 
\int_\Gamma f(\gamma)\langle E(d\gamma)x,y \rangle, \qquad
 x\in\mathcal D(\Psi(f)),\quad y \in H.
$$
For $x\in\mathcal D(\Psi(f))$ we have $\|\Psi(f)x\|^2=
\int_\Gamma|f(\gamma)|^2\langle E(d\gamma)x,x \rangle$.
By \cite[Theorem 13.25]{Ru}, $\Psi(f)$ is bounded if and only if $f$ is bounded; in
that case $\Psi(f)^*=\psi(\bar f)$, and $\Psi(f)$ is normal.
\smallskip

Let $E(\cdot)$ be the spectral measure of a unitary representation $U(t)$.
Fix $t \in G$ and put $f_t(\gamma)=\gamma(t)$. Then, using \eqref{ambrose},
$\Psi(f_t)x=\int_\Gamma  \gamma(t) E(d\gamma)x= U(t)x$.

Conversely, given a regular spectral measure on $\Gamma$, the family $U(t):=\Psi(f_t)$,
where $f_t(\gamma):=\gamma(t)$, is a strongly continuous representation of $G$
(since $\Psi(fg)=\Psi(f)\Psi(g)$ for bounded $f$ and $g$ \cite[p. 345]{Ru}).

\begin{prop} \label{measurable}
Let $E(\cdot)$ be a regular spectral measure on $\Gamma$, and let 
$f: \Gamma \longrightarrow \mathbb C$ be measurable.
Then $\sigma(\Psi(f)) \subset \overline{f(\Sigma(E))}$.
\end{prop}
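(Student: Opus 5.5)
We show that every $\lambda \notin \overline{f(\Sigma(E))}$ lies in the resolvent set of $\Psi(f)$, with the inverse of $\Psi(f)-\lambda I$ given by the functional calculus. Fix such a $\lambda$ and let $\delta>0$ be the distance from $\lambda$ to the closed set $\overline{f(\Sigma(E))}$. Then $|f(\gamma)-\lambda|\ge\delta$ for every $\gamma\in\Sigma(E)$.

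First we note that $\Sigma(E)$ carries all of $E$. By the Definition above, $\Gamma\setminus\Sigma(E)$ is the maximal open set of $E$-measure zero, so $E(\Sigma(E))=I$. Consequently, modifying $f$ off $\Sigma(E)$ does not change $\Psi(f)$. Indeed, all the measures $m_{x,y}$ and $m_x$ vanish on $\Gamma\setminus\Sigma(E)$, so both the domain $\mathcal D(\Psi(f))=\{x:\int|f|^2\,dm_x<\infty\}$ and the defining formula for $\Psi(f)$ depend only on $f|_{\Sigma(E)}$.

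Next define $g(\gamma):=(f(\gamma)-\lambda)^{-1}$ for $\gamma\in\Sigma(E)$ and $g(\gamma):=0$ elsewhere. This $g$ is measurable and bounded by $1/\delta$, so $\Psi(g)$ is a bounded operator by \cite[Theorem 13.25]{Ru}. Now $(f-\lambda)g=1$ holds $E$-a.e. Also $\Psi(f)-\lambda I=\Psi(f-\lambda)$, where we use that $\Psi(1)=I$ and that adding a bounded function does not change the domain. The multiplicativity of the functional calculus for possibly unbounded functions \cite[Theorem 13.24]{Ru} gives
\[
\Psi(f-\lambda)\Psi(g)=\Psi((f-\lambda)g)=I ,
\]
since $\mathcal D(\Psi(f-\lambda)\Psi(g))=\mathcal D(\Psi(g))\cap\mathcal D(\Psi((f-\lambda)g))=H$. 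It also gives
\[
\Psi(g)\Psi(f-\lambda)\subset\Psi(g(f-\lambda))=I ,
\]
with domain $\mathcal D(\Psi(f))$. Therefore $\Psi(f)-\lambda I$ is a bijection of $\mathcal D(\Psi(f))$ onto $H$ with bounded inverse $\Psi(g)$, so $\lambda\notin\sigma(\Psi(f))$.

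The one delicate point is the domain bookkeeping in the unbounded case. It can also be checked by hand. For $x\in H$, the vector $y=\Psi(g)x$ satisfies $m_y(d\gamma)=|g|^2\,dm_x$. Hence $\int|f-\lambda|^2\,dm_y=\int|(f-\lambda)g|^2\,dm_x=\|x\|^2<\infty$, so $y\in\mathcal D(\Psi(f))$ and $(\Psi(f)-\lambda)y=x$. Injectivity follows from the isometry-type formula
\[
\|(\Psi(f)-\lambda)x\|^2=\int_{\Sigma(E)}|f-\lambda|^2\,dm_x\ge\delta^2\|x\|^2 .
\]
Thus $\Psi(f)-\lambda$ is bounded below and surjective, and $\sigma(\Psi(f))\subset\overline{f(\Sigma(E))}$. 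When $f$ is bounded, everything reduces to the bounded functional calculus and the argument is immediate.
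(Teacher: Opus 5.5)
Your proof is correct, but it takes a different route from the paper. The paper does not construct a resolvent. It quotes \cite[Theorem 13.27(c)]{Ru}, $\sigma(\Psi(f))=\text{E-range}(f)$, and then only checks the set inclusion $\text{E-range}(f)\subset\overline{f(\Sigma(E))}$: if $E(f^{-1}(O_n))\ne 0$ for the balls $O_n$ of radius $1/n$ about $\zeta$, then $f^{-1}(O_n)$ must meet $\Sigma(E)$.

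You instead build $(\Psi(f)-\lambda I)^{-1}=\Psi(g)$ directly from the multiplicativity and domain rules of \cite[Theorem 13.24]{Ru}. This amounts to reproving the relevant half of 13.27(c), with $\Sigma(E)$ playing the role of the essential range. Your domain bookkeeping and the lower bound $\|(\Psi(f)-\lambda)x\|\ge\delta\|x\|$ are right.

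Both arguments rest on the same fact, $E(\Gamma\setminus\Sigma(E))=0$, and both take it from the Definition. This is where regularity of $E$ enters, since the complement of $\Sigma(E)$ is a possibly uncountable union of $E$-null open sets.

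Your version is self-contained and gives the explicit estimate $\|(\Psi(f)-\lambda I)^{-1}\|\le 1/\mathrm{dist}(\lambda,f(\Sigma(E)))$. The paper's version is shorter and sets up the E-range description that it reuses for the reverse inclusion in Theorem~\ref{continuous}. With your approach, that reverse inclusion would need a separate argument, e.g.\ approximate eigenvectors taken from $E(f^{-1}(O))H\ne\{0\}$.
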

\begin{proof}
By the definitions in \cite[p. 303]{Ru}, for $f: \mathbb R \longrightarrow \mathbb C$
measurable, {\it the essential range of $f$ with respect to $E(\cdot)$}, which we
denote E-range($f$), is

\centerline{ E-range($f$)$=\{ \zeta\in\mathbb C: E(f^{-1}(O)) \ne 0 \  \text{\rm for
every open } O \ni \zeta\}.$}
\medskip

It is proved in \cite[Theoorem 13.27(c)]{Ru} (without any continuity or boundedness
assumption) that $\sigma(\Psi(f))= \text{\rm E-range}(f)$, so we prove that  
 $\text{\rm E-range}(f) \subset \overline{f(\Sigma(E))}$.
\medskip

Fix $\zeta \in \text{\rm E-range}(f)$. Let $O_n$ be the open ball in $\mathbb C$ of 
radius $1/n$ centered at $\zeta$. Since $E(f^{-1}(O_n)) \ne 0$ and $E(\cdot)$ is
supported on $\Sigma(E)$, $f^{-1}(O_n)\cap\Sigma(E) \ne\emptyset$, so contains a 
$\lambda_n \in \Sigma(E)$ with $f(\lambda_n) \in O_n$. Hence 
$\zeta \in \overline{f(\Sigma(E))}$.
\end{proof}

\begin{thm} \label{continuous}
Let $E(\cdot)$ be a regular spectral measure on $\Gamma$, and let 
$f: \Gamma \longrightarrow \mathbb C$ be continuous.
Then $\sigma(\Psi(f)) =\overline{f(\Sigma(E))}$.
\end{thm}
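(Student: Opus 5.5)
The inclusion $\sigma(\Psi(f)) \subset \overline{f(\Sigma(E))}$ is Proposition \ref{measurable}, which holds for every measurable $f$. So only the reverse inclusion $\overline{f(\Sigma(E))} \subset \sigma(\Psi(f))$ needs proof. Since $\sigma(\Psi(f))$ is closed (for unbounded $f$ it is still closed, being the spectrum of a closed operator, or equivalently the E-range), it suffices to show $f(\gamma_0)\in\sigma(\Psi(f))$ for every $\gamma_0\in\Sigma(E)$.

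We use the characterization $\sigma(\Psi(f))=\text{E-range}(f)$ from \cite[Theorem 13.27(c)]{Ru}, already invoked in the proof of Proposition \ref{measurable}. Fix $\gamma_0\in\Sigma(E)$ and put $\zeta=f(\gamma_0)$. Let $O$ be any open set in $\mathbb C$ containing $\zeta$. By continuity of $f$, the set $W:=f^{-1}(O)$ is open in $\Gamma$, and it contains $\gamma_0$. By the definition of the support $\Sigma(E)$, $E(W)\neq 0$. Hence $E(f^{-1}(O))\ne 0$ for every open $O\ni\zeta$, that is, $\zeta\in\text{E-range}(f)=\sigma(\Psi(f))$. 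Taking closures gives $\overline{f(\Sigma(E))}\subset\sigma(\Psi(f))$, and equality follows with Proposition \ref{measurable}.

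There is no real obstacle; the only point needing care is that continuity of $f$ is exactly what makes $f^{-1}(O)$ an open neighbourhood of $\gamma_0$, so that the support definition applies. For a merely measurable $f$ this fails, which is why Proposition \ref{measurable} gives only an inclusion. If one prefers to avoid the E-range theorem in the bounded case, one can instead argue directly with approximate eigenvectors. Choose open neighbourhoods $W_n$ of $\gamma_0$ on which $|f-\zeta|<1/n$, and pick unit vectors $x_n\in E(W_n)H$, which is nonzero since $\gamma_0\in\Sigma(E)$. Then
$$\|(\Psi(f)-\zeta)x_n\|^2=\int_{W_n}|f-\zeta|^2\,dm_{x_n}\le n^{-2},$$
so $\zeta$ lies in the approximate point spectrum of $\Psi(f)$.
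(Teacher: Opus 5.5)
Your proof is correct and essentially the paper's: both combine $\sigma(\Psi(f))=\text{E-range}(f)$ (Rudin's Theorem 13.27(c)) with the observation that continuity makes $f^{-1}(O)$ an open set meeting $\Sigma(E)$, hence $E(f^{-1}(O))\neq 0$. The paper works directly with $\zeta\in\overline{f(\Sigma(E))}$, whereas you treat $\zeta=f(\gamma_0)$ and then pass to the closure using closedness of the spectrum; your alternative approximate-eigenvector argument is also valid, and it avoids the E-range theorem for the reverse inclusion.
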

\begin{proof}
The inclusion $\sigma(\Psi(f)) \subset\overline{f(\Sigma(E))}$ is in Proposition 
\ref{measurable}.
\smallskip

For the opposite inclusion, we adapt arguments of \cite{MSE}.

Fix $\zeta \in \overline{f(\Sigma(E))}$, and let $O$ be an open set containing $\zeta$.
Then there exists $\lambda \in \Sigma(E)$ with  $f(\lambda) \in O$. Hence 
 $f^{-1}(O)$ is a non-empty open set; if $E(f^{-1}(O))=0$, 
then $f^{-1}(O)\cap \Sigma(E)=\emptyset$, since the support of $E(\cdot)$ is
$\Sigma(E)$, contradicting $\lambda \in  f^{-1}(O)$, so $E(f^{-1}(O)) \ne 0$. This for 
any open $O$ containing $\zeta$, so $\zeta \in \text{\rm E-range}(f)=\sigma(\Psi(f))$.
\end{proof}

 Let $E(\cdot)$ be the spectral measure of a unitary representation $U(t)$.
For a bounded complex Borel measure $\nu$ on $G$, the operator
$U(\nu):=\int_G U(t)d\nu(t)$ is a well-defined (at least weakly)  bounded operator.
We then have the following "spectral mapping theorem", which improves Theorem \ref{jrt} 
and extends Proposition \ref{sigma-E}.

 \begin{cor}[{\bf spectral mapping theorem}] \label{wsmt}
Let $U(t)$ be a continuous unitary representation of $G$, with spectral measure $E(\cdot)$.
Then for every bounded complex measure $\nu$, 
$\ \sigma(\int_G U(t)d\nu(t))=\overline{\hat\nu(\Sigma(E))}$. In particular, for
$\nu=\delta_t$ we obtain  $\sigma(U(t))= \overline{\{\gamma(t): \gamma\in \Sigma(E)\}}$.
\end{cor}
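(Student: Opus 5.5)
The plan is to reduce the statement to Theorem \ref{continuous}, applied to the function $f=\hat\nu$ on $\Gamma$. This needs two ingredients: first, that $U(\nu)=\Psi(\hat\nu)$; second, that $\hat\nu$ is a bounded continuous function on $\Gamma$.

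\textbf{Step 1: $U(\nu)=\Psi(\hat\nu)$.} I would repeat the Fubini computation \eqref{fubini} from the proof of Theorem \ref{jrt}, now with the complex measure $\nu$ in place of $\mu$. For $x,y\in H$,
$$
\langle U(\nu)x,y\rangle=\int_G\Big[\int_\Gamma \gamma(t)\,m_{x,y}(d\gamma)\Big]d\nu(t)
=\int_\Gamma \hat\nu(\gamma)\,m_{x,y}(d\gamma)=\langle \Psi(\hat\nu)x,y\rangle .
$$
Fubini is legitimate here. The integrand $(t,\gamma)\mapsto\gamma(t)$ has modulus $1$ and is jointly continuous on $G\times\Gamma$. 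Both $|\nu|$ and $|m_{x,y}|$ are finite measures. Regularity of both measures handles the product-measurability issue in the non-$\sigma$-compact case, exactly as in the proof of Theorem \ref{jrt}. One point needs care: $\langle U(t)x,y\rangle=\int_\Gamma\gamma(t)\,m_{x,y}(d\gamma)$ is a bounded continuous function of $t$, so the weak definition of $U(\nu)$ makes sense. Since this holds for all $x,y$, we get $U(\nu)=\Psi(\hat\nu)$.

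\textbf{Step 2: regularity of $\hat\nu$.} The function $\hat\nu$ is bounded by $\|\nu\|$. It is continuous on $\Gamma$; this is standard (\cite{Ru1}) and is proved by splitting $\nu$ into a compactly supported part plus a part of small total variation, then using uniform continuity of characters on compact sets.

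\textbf{Step 3: conclusion.} Theorem \ref{continuous} now gives $\sigma(U(\nu))=\overline{\hat\nu(\Sigma(E))}$. For the special case $\nu=\delta_t$, we have $\hat\delta_t(\gamma)=\gamma(t)$ and $U(\delta_t)=U(t)$, so $\sigma(U(t))=\overline{\{\gamma(t):\gamma\in\Sigma(E)\}}$.

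The main obstacle is Step 1, specifically justifying Fubini for a complex measure on a possibly non-$\sigma$-finite $G$. I would handle it as in the proof of Theorem \ref{jrt}. Write $\nu$ as a combination of four finite positive regular measures. Each finite regular measure is concentrated, up to sets of arbitrarily small measure, on $\sigma$-compact sets, and on those Fubini holds.
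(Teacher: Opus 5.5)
Your proposal is correct and follows the paper's proof essentially verbatim: the paper also obtains $U(\nu)=\Psi(\hat\nu)$ from the Fubini computation \eqref{fubini} with $\mu$ replaced by $\nu$, uses the standard continuity of $\hat\nu$ on $\Gamma$, and applies Theorem \ref{continuous} with $f=\hat\nu$. Your extra justification of Fubini, via joint continuity of $(t,\gamma)\mapsto\gamma(t)$ and reduction to $\sigma$-compact carriers of the finite regular measures, fills in a detail the paper leaves implicit. The proof of Theorem \ref{jrt} simply writes ``by Fubini's theorem'' and does not address product measurability, so this part is your own addition rather than a repetition of the paper.
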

\begin{proof}
The bounded function $\hat\nu : \Gamma \longrightarrow \mathbb C$ is continuous 
\cite[Theorem 1.3.3(a)]{Ru1}, and, by \eqref{fubini} (with $\mu$ replaced by $\nu$), 
\begin{equation} \label{Psi}
\Psi(\hat\nu)= \int_\Gamma \hat\nu(\gamma) E(d\gamma)= \int_G U(t)d\nu(t).
\end{equation}
The assertion follows from Theorem \ref{continuous} applied to $f=\hat\nu$.
\end{proof} 

\begin{cor} \label{connes0}
$|\hat\nu(\gamma)| \le \|U(\nu)\|$ for every $\gamma \in \Sigma(E)$ and  bounded measure $\nu$. 

In particular, $U(\nu)=0$ implies $\hat\nu(\gamma)=0$ for every $\gamma \in \Sigma(E)$.
\end{cor}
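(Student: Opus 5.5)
The plan is to read this off directly from the spectral mapping theorem, Corollary \ref{wsmt}, together with the fact that $U(\nu)$ is a normal operator. By \eqref{Psi}, $U(\nu)=\Psi(\hat\nu)$ with $\hat\nu$ bounded. Hence, by \cite[Theorem 13.25]{Ru}, $U(\nu)$ is bounded and normal, and for a bounded normal operator the norm equals the spectral radius, $\|U(\nu)\|=\rho(U(\nu))$.

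Next, fix $\gamma\in\Sigma(E)$. Then $\hat\nu(\gamma)\in\hat\nu(\Sigma(E))\subset\overline{\hat\nu(\Sigma(E))}=\sigma(U(\nu))$, where the last equality is Corollary \ref{wsmt}. This gives
$$|\hat\nu(\gamma)|\le\rho(U(\nu))=\|U(\nu)\|.$$
In fact normality is not essential for the inequality itself, since $\rho(T)\le\|T\|$ holds for any bounded operator. Normality only upgrades the conclusion to $\sup_{\gamma\in\Sigma(E)}|\hat\nu(\gamma)|=\|U(\nu)\|$, which we do not need.

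The particular case follows at once: if $U(\nu)=0$, then $|\hat\nu(\gamma)|\le 0$ for every $\gamma\in\Sigma(E)$.

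There is no real obstacle here. The only point to watch is that the inclusion $\hat\nu(\Sigma(E))\subset\sigma(U(\nu))$ uses the nontrivial direction of Corollary \ref{wsmt}, namely Theorem \ref{continuous}, which relies on the continuity of $\hat\nu$. That direction is available from the earlier results.
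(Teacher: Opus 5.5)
Your argument is correct and is essentially the paper's proof: both read off $\hat\nu(\gamma)\in\overline{\hat\nu(\Sigma(E))}=\sigma(U(\nu))$ from Corollary \ref{wsmt} and then bound $|\hat\nu(\gamma)|\le\rho(U(\nu))\le\|U(\nu)\|$. Your side remark that normality upgrades this to $\sup_{\gamma\in\Sigma(E)}|\hat\nu(\gamma)|=\|U(\nu)\|$ is also correct, though it is not needed.
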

\begin{proof} 
By Corollary \ref{wsmt}, $\sigma(U(\nu))=\overline{\hat\nu(\Sigma(E))}$. Hence for 
 every $\gamma \in \Sigma(E)$ we have 
$|\hat\nu(\gamma)| \le \rho(U(\nu))\le \|U(\nu)\|$.
\end{proof}
We fix a Haar measure $m_G$, and identify  the space $L_1(G):=L_1(G,m_G)$ with 
the space of bounded complex measures absolutely continuous with respect to  $m_G$
When $f \in L_1(G)$, and $f =d\nu/dm_G$, we denote  $U(\nu)$ by $U(f)$.

\begin{prop} \label{Sigma=Gamma}
Let $U(t)$ be  a unitary representation of $G$, with spectral measure $E(\cdot)$.
Then the following are equivalent:

(i) $\Sigma(E)=\Gamma$.

(ii) For every bounded complex measure $\nu \ne 0$ we have $\int_G U(t)d\nu(t) \ne 0$.

(iii) For every $0 \ne f \in L_1(G,m_G)$ we have $\int_G U(t)f(t)dm_G(t) \ne 0$.
\end{prop}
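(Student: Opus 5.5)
The plan is to prove (i)$\Rightarrow$(ii)$\Rightarrow$(iii)$\Rightarrow$(i). The first two implications follow from Corollary \ref{connes0} and the uniqueness theorem for Fourier–Stieltjes transforms. The last uses the fact that $\hat f$, $f\in L_1(G)$, can be made to vanish off an arbitrary nonempty open set.

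For (i)$\Rightarrow$(ii), let $\nu\ne 0$ be a bounded complex measure with $U(\nu)=0$. By Corollary \ref{connes0}, $\hat\nu(\gamma)=0$ for every $\gamma\in\Sigma(E)=\Gamma$. The uniqueness theorem for Fourier–Stieltjes transforms \cite[Theorem 1.3.6]{Ru1} then gives $\nu=0$, a contradiction.

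The implication (ii)$\Rightarrow$(iii) is immediate: a nonzero $f\in L_1(G)$ defines the nonzero measure $d\nu=f\,dm_G$, and $U(f)=U(\nu)$.

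For (iii)$\Rightarrow$(i), argue by contraposition. Suppose $\Sigma(E)\ne\Gamma$. Then $O:=\Gamma\setminus\Sigma(E)$ is a nonempty open set with $E(O)=0$. Pick $\gamma_0\in O$ and a compact neighborhood $K$ of $\gamma_0$ inside $O$. The standard construction \cite[Theorem 2.6.1]{Ru1} gives $0\ne f\in L_1(G)$ with $\hat f$ vanishing outside $K$; concretely, take $f$ whose Fourier transform is $\mathbf 1_W*\mathbf 1_{W'}$ (up to normalization) for small neighborhoods $W,W'$, translated to $\gamma_0$. Since $\hat f(\gamma_0)\neq 0$, we have $f\ne 0$. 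By \eqref{Psi},
\[
U(f)=\Psi(\hat f)=\int_\Gamma \hat f(\gamma)\,E(d\gamma).
\]
Because $\hat f=0$ off $K\subset O$ and $E(O)=0$, each $m_{x,y}$ vanishes on subsets of $O$, so $\langle U(f)x,y\rangle=0$ for all $x,y$. Hence $U(f)=0$, contradicting (iii).

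The main obstacle is the existence of a nonzero $f\in L_1(G)$ whose transform is supported in a prescribed open set. This is classical harmonic analysis on LCA groups: it follows from the Plancherel and inversion theorems, since the convolution of two indicator functions of compact sets in $\Gamma$ is the Fourier transform of a product of two $L_2$ functions, hence of an $L_1$ function. Beyond this, one only has to remember the convention $\hat f(\gamma)=\int f\gamma\,dm_G$, which affects nothing.
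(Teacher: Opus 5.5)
Your proof is correct and follows the paper's own route: (i)$\Rightarrow$(ii) via Corollary \ref{connes0} and the uniqueness theorem, (ii)$\Rightarrow$(iii) trivially, and (iii)$\Rightarrow$(i) by producing a nonzero $f\in L_1(G)$ whose transform vanishes on $\Sigma(E)$ and then applying \eqref{Psi} with $E(\Gamma\setminus\Sigma(E))=0$. The only difference is that the paper simply cites \cite[Theorem 1.6.4]{Ru1} for the existence of $f$, while you sketch the standard Plancherel construction $\hat f=\mathbf 1_W*\mathbf 1_{W'}$, which is the usual proof of that cited result.
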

\begin{proof} (i) implies (ii): If $U(\nu)=0$, then, by Corollary \ref{connes0} and (i),
$\hat\nu(\gamma)=0$ for every $\gamma \in \Gamma$. Hence $\nu=0$ by the 
uniqueness theorem \cite[1.7.3(b)]{Ru1}.
\smallskip

Obviously (ii) implies (iii). 

Assume (iii). If $\Sigma(E) \ne \Gamma$, then, by applying \cite[Theorem 1.6.4]{Ru1} 
to $\Gamma\setminus \Sigma(E)$ we obtain $0\ne f \in L_1(G,m_G)$ such that 
$\hat f (\gamma)=0$ for every $\gamma\in\Sigma(E)$. Hence, by \eqref{Psi},
 $$
\int_G U(t)f(t)dm_G(t)= \int_\Gamma \hat f(\gamma)E(d\gamma) =
\int_{\Sigma(E)} \hat f(\gamma)E(d\gamma) = 0,
$$
contradicting (iii). Consequently $\Sigma(E)=\Gamma$.
\end{proof}

{\bf Definition.} A unitary representation $(U(t))_{t \in G}$ is called {\it aperiodic} if
$U(t)\ne I$ for $t \ne 0$.

\begin{prop} \label{example1}
Assume $G$ has a countable base.
Let $(\theta(t))_{t\in G}$ be  an ergodic action by measure preserving transformations 
of a probability space $(\Omega,m)$, with $\mathbf{U}:=(U(t))_{t \in G}$ the 
unitary representation induced on $L_2(\Omega,m)$. If $\mathbf{U}$ is aperiodic, 
then the support of its spectral measure is $\Gamma$.
\end{prop}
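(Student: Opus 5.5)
Via Proposition \ref{Sigma=Gamma}, it suffices to show (iii): if $f\in L_1(G,m_G)$ satisfies $U(f):=\int_G U(t)f(t)\,dm_G(t)=0$, then $f=0$. Equivalently, by the proof of (iii)$\Rightarrow$(i), suppose $\Sigma(E)\ne\Gamma$. Then the open set $W:=\Gamma\setminus\Sigma(E)$ is non-empty and $E(W)=0$; we derive a contradiction with ergodicity plus aperiodicity.

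First, use the spectral measure to describe eigenfunctions. Ergodicity gives $E(\{1\})$ equal to the projection on constants, which has rank one. More generally, for each $\gamma\in\Gamma$ the projection $E(\{\gamma\})$ is onto $\{g: U(t)g=\gamma(t)g\ \forall t\}$. In an ergodic system such a $g$ has constant modulus. Consequently, the set $\Lambda$ of eigencharacters is a subgroup of $\Gamma$: products of eigenfunctions are eigenfunctions, and $\bar g$ is an eigenfunction for $\bar\gamma$. Also, $L_2$ is closed under multiplication by bounded functions.

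The key step is to show that $\Sigma(E)$ is invariant under translation by... Here I would rather follow a different route. Take $g\in L^\infty(m)$ with $\int g\,dm=0$ and let $\sigma_g=\langle E(\cdot)g,g\rangle$. The plan is to prove that $\Sigma(E)$ is closed under multiplication, i.e.\ $\Sigma(E)\cdot\Sigma(E)\subset\Sigma(E)$, using that $U(t)$ is multiplicative on $L^\infty$. For bounded $g,h$ the spectral measure of $gh$ should relate to the convolution of those of $g$ and $h$. Concretely, I would use the following fact: if $\gamma_1,\gamma_2\in\Sigma(E)$, pick neighborhoods $O_i$ and bounded $g_i$ with $E(O_i)g_i\ne0$ (approximating by bounded functions, since $L^\infty$ is dense and $E(O_i)$ is non-zero). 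Then $g_1g_2$ has nontrivial spectral mass near $O_1O_2$. Verifying this nondegeneracy is the main obstacle, because the product could vanish. To handle it I would replace $g_i$ by functions $g_i=U(\phi_i)h_i$ with $\hat\phi_i$ supported in $O_i$, and try to choose them so that $\|g_1g_2\|\neq0$. Ergodicity (via the Hopf-type argument that $\{g_1\ne0\}$ and translates of $\{g_2\ne0\}$ must meet) should make some translate $g_1\cdot U(s)g_2$ nonzero, and this translate still has spectrum in $O_1O_2$. Symmetry under $\gamma\mapsto\bar\gamma$ is immediate from $U(t)\bar g=\overline{U(t)g}$. Hence $\Sigma(E)$ is a closed subgroup of $\Gamma$.

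Finally, aperiodicity finishes the argument. Let $H:=\Sigma(E)$, a closed subgroup, and let $H^\perp=\{t\in G:\gamma(t)=1\ \forall\gamma\in H\}$. For $t\in H^\perp$ we get $U(t)=\int_H\gamma(t)E(d\gamma)=I$, so aperiodicity gives $H^\perp=\{0\}$. By Pontryagin duality, $H=(H^\perp)^\perp=\Gamma$ \cite{Ru1}. The countable base is used for separability of $L_2$ and measurability in the ergodic argument. The hardest part is the closure-under-products step, particularly ensuring that the product of spectrally localized bounded functions is nonzero.
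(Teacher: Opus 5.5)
Your overall architecture is the paper's: show that $\Sigma(E)$ is a closed subgroup of $\Gamma$, use aperiodicity to see that its annihilator $\{t\in G:\gamma(t)=1\ \forall\gamma\in\Sigma(E)\}$ is $\{0\}$, and conclude $\Sigma(E)=\Gamma$ by duality. Your last step, $U(t)=\int_{\Sigma(E)}\gamma(t)\,E(d\gamma)=E(\Sigma(E))=I$ on the annihilator, is slightly more direct than the paper's route through $\sigma(U(t))=\{1\}$ and normality. The difference is that the paper does not prove the subgroup property; it quotes it from Foia\c{s} (his Proposition 5).

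You set out to prove the subgroup property, and that is where your argument is incomplete, though not at the point you flag. Nondegeneracy is the easy part: ergodicity gives $s$ with $m(\{g_1\neq0\}\cap\{U(s)g_2\neq0\})>0$, hence $g_1\cdot U(s)g_2\neq0$. What is missing is the localization statement you only assert (``should relate to the convolution'', ``still has spectrum in $O_1O_2$''). It says: if bounded $g_1,g_2$ have spectral measures carried by compact sets $K_1,K_2$, then the spectral measure of $g_1g_2$ is carried by a small neighborhood of $K_1K_2$. Without it, a nonzero product tells you nothing about where its spectral mass lies, so you cannot conclude $E(O)\neq0$ for neighborhoods $O$ of $\gamma_1\gamma_2$. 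This is exactly where multiplicativity of $U(t)$ on $L^\infty$ must be used.

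The lemma has a short proof.

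\emph{Setup.} Choose $\phi_i\in L_1(G)$ with $\hat\phi_i=1$ on $K_i$ and $\hat\phi_i=0$ off a relatively compact open $V_i\supset K_i$. Since $E(K_i)g_i=g_i$, you have $g_i=U(\phi_i)g_i$, hence $U(r)g_i=\int_G\phi_i(a-r)\,U(a)g_i\,da$. Because $\|U(a)g_i\|_\infty=\|g_i\|_\infty$, products of such integrals are legitimate $L_2$-valued integrals.

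\emph{Key identity.} Using $U(r)(g_1g_2)=U(r)g_1\cdot U(r)g_2$, for $f\in L_1(G)$ you get
\[
U(f)(g_1g_2)=\int_G\!\int_G k(a,b)\,U(a)g_1\cdot U(b)g_2\,da\,db,\qquad k(a,b)=\int_G f(r)\phi_1(a-r)\phi_2(b-r)\,dr,
\]
with $k\in L_1(G\times G)$ and $\hat k(\gamma_1,\gamma_2)=\hat f(\gamma_1\gamma_2)\hat\phi_1(\gamma_1)\hat\phi_2(\gamma_2)$.

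\emph{Conclusion.} If $\hat f$ vanishes on a neighborhood of the compact set $\overline{V_1}\,\overline{V_2}$, then $\hat k\equiv0$. By uniqueness $k=0$, so $U(f)(g_1g_2)=0$. Since $\|U(f)x\|^2=\int_\Gamma|\hat f|^2\,d\langle E(\cdot)x,x\rangle$ by \eqref{Psi}, the spectral measure of $g_1g_2$ is carried by $\overline{V_1}\,\overline{V_2}$. Taking $V_1,V_2$ small around $\gamma_1,\gamma_2$ puts this set inside any prescribed neighborhood of $\gamma_1\gamma_2$.

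Apply this to your localized bounded functions $g_1$ and $U(s)g_2$; the latter has the same spectral measure as $g_2$. This yields $E(O)\neq0$, so $\Sigma(E)\cdot\Sigma(E)\subset\Sigma(E)$. Together with your symmetry remark and $1\in\Sigma(E)$, you then have a self-contained proof of the fact the paper imports from Foia\c{s}. Alternatively, simply cite that result, and your proof becomes the paper's.
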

\begin{proof} Let $E(\cdot)$ be the spectral measure of $\mathbf{U}$, with (closed)
support $\Sigma(E)$. Put $G_0:=\{t \in G: \gamma(t)=1,\ \forall\ \gamma\in\Sigma(E)\}$. 
Clearly $G_0$ is a closed subgroup of $G$. Fix $t \in G_0$; then
 $\sigma(U(t))= \overline{\{\gamma(t): \gamma\in \Sigma(E)\}}$ by Corollary 
\ref{wsmt}, so $\sigma(U(t))=\{1\}$. Since $U(t)$ is normal, $U(t)=I$, and by
aperiodicity $t=0$. Hence $G_0=\{0\}$.

By \cite[Proposition 5]{Fo},  $\Sigma(E)$ is a closed subgroup of $\Gamma$.
 By \cite[Theorem 2.1.2]{Ru}, the dual of $\Gamma/\Sigma(E)$ is isomorphically
homeomorphic to $G_0$, which yields $\Sigma(E)=\Gamma$.
\end{proof}
\medskip

\subsection{The Arveson spectrum} \label{Arveson}

Let $\mathbf{T}:=\{T(t): t\in G\}$ be a (bounded) weakly continuous representation
of the LCA group $G$ in a Banach space $X$.  For any bounded complex Borel measure
$\nu$ on $G$, the operator $T(\nu):=\int_G T(t)d\nu(t)$ is well-defined (weakly) on $X$.
\smallskip

We fix a Haar measure $m_G$. 
For a bounded complex measure $\nu$ absolutely continuous with respect to  $m_G$,
 we  may denote $T(\nu)$ by $T(f)$, where $f =d\nu/dm_G$.
\smallskip

{\bf Definition \cite{Ar1}.} The {\it Arveson spectrum} of $\mathbf{T}$, denoted 
$Sp(\mathbf{T})$, is 
\begin{equation} \label{arveson}
Sp(\mathbf{T}):= \{\gamma \in \Gamma: \hat f(\gamma)=0 \quad 
\text{\rm whenever } f\in L_1(G)\  and \ T(f)=0\}.
\end{equation}
It is easily shown that $Sp(\mathbf{T})$ is a closed set.

\begin{prop} \label{inclusion}
Let $\mathbf{U}=U(t)_{t\in G}$ be a unitary representation of $G$ in a complex Hilbert
space, with spectral measure $E(\cdot)$. Then $\Sigma(E)$,  the support of $E(\cdot)$,
is a subset of the Arveson spectrum. Hence $Sp(\mathbf{U}) \ne \emptyset$.
\end{prop}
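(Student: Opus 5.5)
We show that every $\gamma_0 \in \Sigma(E)$ belongs to $Sp(\mathbf{U})$, working directly from the definition \eqref{arveson}. Fix $\gamma_0\in\Sigma(E)$ and take $f\in L_1(G)$ with $U(f)=0$; we must show $\hat f(\gamma_0)=0$. Let $\nu$ be the absolutely continuous measure with density $f$, so that $U(\nu)=U(f)=0$. Corollary \ref{connes0} then gives $\hat\nu(\gamma)=0$ for every $\gamma\in\Sigma(E)$. Since $\hat\nu=\hat f$, we get $\hat f(\gamma_0)=0$. As $f$ was arbitrary with $U(f)=0$, this yields $\gamma_0\in Sp(\mathbf{U})$, and therefore $\Sigma(E)\subset Sp(\mathbf{U})$.

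If one prefers not to lean on Corollary \ref{connes0}, the same conclusion comes directly from Corollary \ref{wsmt}. By \eqref{Psi}, $U(f)=\Psi(\hat f)$, and $\sigma(U(f))=\overline{\hat f(\Sigma(E))}$. If $U(f)=0$ its spectrum is $\{0\}$, so $\hat f$ vanishes on $\Sigma(E)$.

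For the nonemptiness claim it suffices to show $\Sigma(E)\neq\emptyset$. Suppose $\Sigma(E)=\emptyset$. Then $\Gamma=\Gamma\setminus\Sigma(E)$ is the maximal open set with $E(O)=0$, so $E(\Gamma)=0$. But \eqref{ambrose} at $t=0$ gives $E(\Gamma)=U(0)=I$, so this forces $H=\{0\}$. Assuming, as is implicit, that $H\neq\{0\}$, we get $\Sigma(E)\ne\emptyset$, and hence $Sp(\mathbf{U})\supset\Sigma(E)$ is nonempty.

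The only subtle point is the identity $U(f)=\Psi(\hat f)$ for $f\in L_1(G)$, used to bring in Corollary \ref{connes0} or \ref{wsmt}. This is already \eqref{Psi}, obtained via the Fubini argument \eqref{fubini}, so no new obstacle arises. Beyond that, the proof is simply the observation that the Arveson spectrum is defined through the annihilation condition, and Corollary \ref{connes0} is exactly that annihilation statement restricted to $\Sigma(E)$.
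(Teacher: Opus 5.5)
Your proposal is correct and is essentially the paper's proof: the paper also uses Theorem \ref{continuous} together with \eqref{Psi} to get $\sigma(U(f))=\overline{\hat f(\Sigma(E))}=\{0\}$ whenever $U(f)=0$, so $\hat f$ vanishes on $\Sigma(E)$. Your extra remark that $\Sigma(E)\neq\emptyset$ (since $E(\Gamma)=U(0)=I$ when $H\neq\{0\}$) supplies the step the paper leaves implicit for the nonemptiness claim.
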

\begin{proof} Let $U(\nu)=0$ for a measure with $|\nu| \ll m_G$. By Theorem
\ref{continuous} and \eqref{Psi}, $ \overline{\hat\nu(\Sigma(E))}=\sigma(U(\nu))=
\{0\}$. Hence $\hat\nu(\gamma)=0$ for every $\gamma \in\Sigma(E)$. This
implies that $\Sigma(E) \subset Sp(\mathbf{U})$.
\end{proof}

\begin{thm} \label{equality}
Let $\mathbf{U}=U(t)_{t\in G}$ be a unitary representation of $G$ in a complex Hilbert 
space, with spectral measure $E(\cdot)$. Then $Sp(\mathbf{U})=\Sigma(E)$.
\end{thm}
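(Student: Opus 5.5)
By Proposition \ref{inclusion} we already have $\Sigma(E)\subset Sp(\mathbf{U})$, so only the reverse inclusion $Sp(\mathbf{U})\subset\Sigma(E)$ remains. The plan is to show that every $\gamma_0\notin\Sigma(E)$ fails the defining condition \eqref{arveson}. Concretely, we will produce some $f\in L_1(G)$ with $U(f)=0$ but $\hat f(\gamma_0)\ne 0$.

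Fix $\gamma_0\in\Gamma\setminus\Sigma(E)$. Since $\Sigma(E)$ is closed, $W:=\Gamma\setminus\Sigma(E)$ is an open neighborhood of $\gamma_0$. The key harmonic-analysis input is the standard fact from \cite{Ru1} (the same kind of result, \cite[Theorem 1.6.4]{Ru1} or \cite[2.6.2]{Ru1}, used in the proof of Proposition \ref{Sigma=Gamma}). It says that for any open $W\subset\Gamma$ and $\gamma_0\in W$ there is $f\in L_1(G)$ whose Fourier transform $\hat f$ has compact support contained in $W$ and satisfies $\hat f(\gamma_0)\ne0$. To construct it, choose a compact neighborhood $K$ of $1$ in $\Gamma$ and an open neighborhood $V$ of $1$ with $\gamma_0 K V\subset W$. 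Then take $\hat f = (\text{normalized } \chi_{\gamma_0 K} * \chi_{V})$, which is the transform of a product of two $L_2(G)$ functions (Plancherel), hence of an $L_1$ function. This function is positive at $\gamma_0$ and vanishes off $\gamma_0KV$. If the paper's cited theorem already gives exactly "nonzero at a prescribed point with support in $W$", we simply invoke it.

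With such $f$, apply \eqref{Psi} to $\nu=f\,dm_G$:
$$U(f)=\int_\Gamma \hat f(\gamma)E(d\gamma)=\int_{\Sigma(E)}\hat f(\gamma)E(d\gamma)=0,$$
because $\hat f$ vanishes on $\Sigma(E)$ and $E(\Gamma\setminus\Sigma(E))=0$. The latter fact needs a short check, since $\Gamma\setminus\Sigma(E)$ is a union of open $E$-null sets. We cannot argue by countable additivity, since $\Gamma$ need not be second countable. Instead we use regularity of $E$: for each $x$, the regular measure $m_x$ vanishes on every open set with $E(O)=0$, hence on their union by inner regularity on compact subsets, each of which is covered by finitely many such $O$. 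Alternatively, it suffices that $\hat f$ vanishes on the open set $W$'s complement and $\operatorname{supp}\hat f$ is a compact subset of $W$, covered by finitely many $E$-null open sets. Either way $U(f)=0$, while $\hat f(\gamma_0)\neq0$, so $\gamma_0\notin Sp(\mathbf{U})$.

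The main obstacle is the null-set issue just described, namely justifying $E(\Gamma\setminus\Sigma(E))=0$ without separability. We handle it via compactness of $\operatorname{supp}\hat f$, which is why we insist on $\hat f$ having compact support inside $W$. The existence of such $f$ is classical and is taken from \cite{Ru1}.
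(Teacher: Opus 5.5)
Your proof is correct and is the paper's argument: for $\gamma_0\notin\Sigma(E)$ you take $f\in L_1(G)$ (as from \cite[Theorem 2.6.2]{Ru1}) with $\hat f(\gamma_0)\neq0$ and $\hat f=0$ outside the open set $\Gamma\setminus\Sigma(E)$, and then \eqref{Psi} gives $U(f)=0$, so $\gamma_0\notin Sp(\mathbf{U})$. Your extra justification that $\int_\Gamma\hat f\,dE=0$, either by inner regularity or by covering a compact support of $\hat f$ with finitely many $E$-null open sets, spells out what the paper dismisses as ``easily shown'' when defining $\Sigma(E)$; for the compact-support variant, take $V$ relatively compact with $\gamma_0K\overline{V}\subset W$, not merely $\gamma_0KV\subset W$.
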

\begin{proof} By Proposition \ref{inclusion}, $\Sigma(E) \subset Sp(\mathbf{U})$.
If there is no equality, there is a $\gamma_0 \in Sp(\mathbf{U})\setminus \Sigma(E)$.
Since $\Sigma(E)$ is closed, there exists an open set $O$ with $\gamma_0 \in O$
and $O\cap \Sigma(E)= \emptyset$. By \cite[Theorem 2.6.2]{Ru1}, there exists
$f \in L_1(G)$ with $\hat f(\gamma_0)=1$ and $\hat f(\gamma)=0$ outside $O$,
in particular on $\Sigma(E)$. But then by \eqref{Psi},
 $U(f)=\int_G U(t)f(t)dm_G(t)=\int_{\Sigma(E)}\hat f(\gamma) E(d\gamma)=0$, 
so $\hat f(\gamma_0)=0$ since $\gamma_0 \in Sp(\mathbf{U})$, a contradiction. 
Hence $Sp(\mathbf{U})=\Sigma(E)$.
\end{proof}

{\bf Remarks.} 1. St\o rmer \cite[p. 208]{St} stated the equality 
$\Sigma(E)=Sp(\mathbf{U})$ without proof. We have not found any published proof.

2. Proposition \ref{inclusion} can be deduced from \cite{Ar1}.
For $x \in H$, Arveson \cite[Definition 2.1]{Ar1} defined 
$$
Sp(\mathbf U)(x) := \{\gamma \in \Gamma: \hat f(\gamma)=0 \quad 
\text{\rm whenever} \  f \in L_1(G)\  and \ U(f)x=0\}.
$$
It is easy to show that $Sp(\mathbf U)(x) \subset Sp(\mathbf U)$, so "the spectral
subspace" $M^U(Sp(\mathbf{U}))$ as defined in \cite{Ar1} is $H$. As observed in
\cite[p. 226]{Ar1}, and proved in \cite[Proposition 8.3.2]{Pe}, for 
$\Delta \subset \Gamma$ closed, $M^U(\Delta)=E(\Delta)H$. Hence 
$E(Sp(\mathbf{U}))=I$, so $\Sigma(E) \subset Sp(\mathbf{U})$.
\smallskip

{\bf Note.} By \cite[Corollary 7.2.5(c)]{Ru}, $Sp(\mathbf{U})(x)=\emptyset$ implies
that $U(f)x=0$ for every $f \in L_1(G)$, and  $x=0$ by continuity of $\mathbf{U}$.
Hence $Sp(\mathbf{U})(x)$ is not empty for $x \ne 0$.

\begin{cor} \label{Uf=0}
Let $\mathbf{U}$ be as in Proposition \ref{inclusion}. The following conditions are
equivalent for $f \in L_1(G)$:

(i) $\int_G U(t)f(t)dm_G(t)=0$.

(ii) $\hat f(\gamma)=0 \quad \forall\ \gamma \in Sp(\mathbf{U})$.

(iii) $\hat f(\gamma)=0 \quad \forall\ \gamma \in \Sigma(E)$.
\end{cor}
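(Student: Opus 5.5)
The equivalence (ii)$\Leftrightarrow$(iii) needs no new argument. By Theorem \ref{equality}, $Sp(\mathbf{U})=\Sigma(E)$, so the two conditions are literally the same condition on $\hat f$. It therefore suffices to prove (i)$\Leftrightarrow$(iii). Throughout, $f\in L_1(G)$ is identified with the measure $\nu$ having $d\nu=f\,dm_G$, so that $U(f)=U(\nu)$ and $\hat f=\hat\nu$.

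For (i)$\Rightarrow$(iii), assume $U(f)=0$. Corollary \ref{connes0} then gives $|\hat f(\gamma)|\le \|U(f)\|=0$ for every $\gamma\in\Sigma(E)$. An equivalent route is the spectral mapping theorem, Corollary \ref{wsmt}, which yields $\overline{\hat f(\Sigma(E))}=\sigma(U(f))=\sigma(0)=\{0\}$. Either way $\hat f$ vanishes on $\Sigma(E)$. This direction was already implicit in the proof of Proposition \ref{inclusion}.

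For (iii)$\Rightarrow$(i), I would use the representation \eqref{Psi}:
\[
U(f)=\Psi(\hat f)=\int_\Gamma \hat f(\gamma)\,E(d\gamma).
\]
By the definition of the support, $E(\Gamma\setminus\Sigma(E))=0$. Hence for all $x,y\in H$ the complex measure $m_{x,y}$ is concentrated on $\Sigma(E)$. Indeed, for a Borel set $A\subset\Gamma\setminus\Sigma(E)$ we have $|m_{x,y}(A)|=|\langle E(A)x,y\rangle|=0$, because $E(A)=E(A)E(\Gamma\setminus\Sigma(E))=0$. Consequently
\[
\langle U(f)x,y\rangle=\int_{\Sigma(E)}\hat f(\gamma)\,m_{x,y}(d\gamma)=0,
\]
since $\hat f$ vanishes on $\Sigma(E)$. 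As this holds for all $x,y$, we get $U(f)=0$. This is exactly the computation already used in the proofs of Proposition \ref{Sigma=Gamma} and Theorem \ref{equality}.

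The only point needing care is the claim $E(\Gamma\setminus\Sigma(E))=0$, i.e.\ that $E$ vanishes on the maximal open null set. The paper asserts this in the Definition preceding Proposition \ref{sigma-E}. For a non-separable $\Gamma$ it follows from the regularity of each $m_x$: every compact subset $K$ of $\Gamma\setminus\Sigma(E)$ is covered by finitely many open sets $O$ with $E(O)=0$, so $m_x(K)=0$. Inner regularity of $m_x$ on the open set $\Gamma\setminus\Sigma(E)$ then gives $m_x(\Gamma\setminus\Sigma(E))=0$ for every $x$, and hence $E(\Gamma\setminus\Sigma(E))=0$. Apart from this point, the corollary is a direct assembly of Theorem \ref{equality}, Corollary \ref{connes0} and \eqref{Psi}.
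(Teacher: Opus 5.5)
Your proof is correct and uses the same ingredients as the paper. Your (iii)$\Rightarrow$(i) is the paper's computation from \eqref{Psi}, and your regularity argument properly justifies $E(\Gamma\setminus\Sigma(E))=0$, which the paper simply takes from its definition of support; the rest rearranges the paper's chain (i)$\Rightarrow$(ii)$\Rightarrow$(iii), which goes through the definition of $Sp(\mathbf{U})$ and Proposition \ref{inclusion}, into Theorem \ref{equality} for (ii)$\Leftrightarrow$(iii) plus a direct (i)$\Rightarrow$(iii) via Corollary \ref{connes0}. That direct step is exactly the argument inside the proof of Proposition \ref{inclusion}, and it shows this implication never needs the Arveson spectrum.
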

\begin{proof} $(i) \Rightarrow (ii)$ by definition of $Sp(\mathbf{U})$.
$(ii) \Rightarrow (iii)$ by Proposition \ref{inclusion}, and $(iii) \Rightarrow (i)$ since
$\int_G U(t)f(t)dm_G(t)= \int_{\Sigma(E)}\hat f(\gamma)E(d\gamma)$ by \eqref{Psi}.
\end{proof}

{\bf Remark.} We could not prove directly, without referring to Arveson's spectrum, 
that (i) implies (iii) in Cororllary \ref{Uf=0}.

\begin{thm}[{\bf spectral mapping theorem}] \label{WSMT}
Let $\mathbf{U}=U(t)_{t\in G}$ be a unitary representation of $G$ in a complex 
Hilbert space. 
For any bounded complex Borel measure $\nu$  we have  
$\sigma(U(\nu)) =\overline{\hat\nu(Sp(\mathbf{U}))}$.
\end{thm}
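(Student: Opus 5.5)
This is immediate from two results already in hand: Corollary \ref{wsmt} and Theorem \ref{equality}. The plan is simply to combine them, after recalling why $U(\nu)$ is the functional-calculus operator $\Psi(\hat\nu)$.

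First, let $E(\cdot)$ be the spectral measure of $\mathbf{U}$ given by the generalized Stone theorem \eqref{ambrose}. Fix a bounded complex Borel measure $\nu$ on $G$. Its Fourier--Stieltjes transform $\hat\nu$ is bounded and continuous on $\Gamma$. By the Fubini computation \eqref{fubini}, carried out with the complex measure $\nu$ in place of $\mu$ (this is legitimate since $|\nu|$ is finite and $|\gamma(t)|=1$), we obtain $U(\nu)=\Psi(\hat\nu)$, which is \eqref{Psi}.

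Second, Theorem \ref{continuous}, applied to the continuous function $f=\hat\nu$, gives $\sigma(U(\nu))=\overline{\hat\nu(\Sigma(E))}$. This is exactly Corollary \ref{wsmt}.

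Finally, Theorem \ref{equality} identifies $\Sigma(E)=Sp(\mathbf{U})$. Substituting this into the previous formula yields $\sigma(U(\nu))=\overline{\hat\nu(Sp(\mathbf{U}))}$, which is the assertion.

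There is no real obstacle here, since the substantive work lies in Theorem \ref{equality}. There, the inclusion $Sp(\mathbf{U})\subset\Sigma(E)$ needed the existence of $f\in L_1(G)$ with $\hat f(\gamma_0)=1$ and $\hat f$ vanishing outside a neighbourhood of $\gamma_0$ \cite[Theorem 2.6.2]{Ru1}. The only point worth checking is that Corollary \ref{wsmt} holds for arbitrary bounded complex $\nu$, including continuous singular ones, and not only for absolutely continuous measures. It does, because Theorem \ref{continuous} needs only continuity of $\hat\nu$. This is why the Hilbert space statement holds without the exceptions that occur for Banach space representations \cite{DLZ}.
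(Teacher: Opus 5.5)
Your proposal is correct and is exactly the paper's argument. The paper's proof likewise just combines Corollary~\ref{wsmt}, giving $\sigma(U(\nu))=\overline{\hat\nu(\Sigma(E))}$ for every bounded complex $\nu$, with Theorem~\ref{equality}, giving $\Sigma(E)=Sp(\mathbf{U})$; your side remarks about $U(\nu)=\Psi(\hat\nu)$ and about only continuity of $\hat\nu$ being needed accurately describe how those ingredients were obtained.
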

\begin{proof} Let $E(\cdot)$ be the  spectral measure of $\mathbf{U}$.
	By Corollary \ref{wsmt} and Theorem \ref{equality},
$\sigma(U(\nu))= \overline{\hat\nu(\Sigma(E))} = \overline{\hat\nu(Sp(\mathbf{U}))}$.
\end{proof}

{\bf Remarks.} 1. D'Antoni et al. \cite[Lemma 1]{DLZ} proved, for any bounded 
continuous representation $\mathbf{T}$ in a Banach space, that for any $\nu \in M(G)$,
$\overline{\hat\nu(Sp(\mathbf{T}))} \subset \sigma(T(\nu))$, where, as beofre,
$T(\nu)x=\int_G T(t)xd\nu(t)$. See also \cite[Proposition 2.10]{Ar2}. 

2. For a representation $\mathbf{T}$ in a Banach space as above, the equality 
$\sigma(T(\nu)) = \overline{\hat\nu(Sp(\mathbf{T}))}$ is proved in 
\cite[Theorem 1]{DLZ} when the continuous part of $\nu$ is absolutely continuous 
(maybe zero, i.e. $\nu$ is discrete), but may fail in general. For unitary representations
in Hilbert space, the equality holds  for any $\nu$, by Theorem \ref{WSMT}.

3. The counter-example in \cite{DLZ} is in $L_1(G)$. Theorem 2.13 in \cite{CL} provides 
an example of a continuous non-singular probability $\mu$ on $\mathbb T$ such that 
for $\mathbf{T}$ the regular representation in the {\it reflexive} $L_p(\mathbb T)$,
 $1<p<2$, the equality $\sigma(T(\mu)) = \overline{\hat\mu(Sp(\mathbf{T}))}$ fails.

\begin{cor} \label{connes}
Let $\mathbf{U}:=U(t)_{t \in G}$ be a unitary representation. Then the following are
equivalent for $\gamma \in \Gamma$:

(i) $\gamma \in \Sigma(E)$.

(ii) $\gamma \in Sp(\mathbf{U})$.

(iii) $|\hat\nu(\gamma)| \le \|U(\nu)\|$ for every  bounded complex measure $\nu$. 

(iv) $|\hat f(\gamma)| \le \|U(f)\|$ for every  $f \in L_1(G)$.

(v) If $\nu$ is a bounded complex measure with $U(\nu)=0$, then $\hat\nu(\gamma)=0$.

(vi) If $f \in L_1(G)$ with $U(f)=0$, then $\hat f(\gamma)=0$.
\end{cor}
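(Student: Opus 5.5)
The plan is to establish the chain (i)$\Leftrightarrow$(ii) first, and then close two cycles of implications through (iii)--(vi). The equivalence (i)$\Leftrightarrow$(ii) is exactly Theorem \ref{equality}, so the real task is to connect (iii)--(vi) to it.

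For the chain (i)$\Rightarrow$(iii)$\Rightarrow$(iv)$\Rightarrow$(vi)$\Rightarrow$(ii), I argue as follows.

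\begin{itemize}
\item (i)$\Rightarrow$(iii) is Corollary \ref{connes0}: for $\gamma\in\Sigma(E)$, the spectral mapping Corollary \ref{wsmt} gives $\hat\nu(\gamma)\in\sigma(U(\nu))$. Hence $|\hat\nu(\gamma)|\le\rho(U(\nu))\le\|U(\nu)\|$.
\item (iii)$\Rightarrow$(iv) is the special case $d\nu=f\,dm_G$.
\item (iv)$\Rightarrow$(vi) is immediate: if $U(f)=0$, then $|\hat f(\gamma)|\le 0$.
\item (vi)$\Rightarrow$(ii) is literally the definition \eqref{arveson} of $Sp(\mathbf U)$ applied to the single point $\gamma$.
\end{itemize}

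This closes the cycle (i)$\Rightarrow$(iii)$\Rightarrow$(iv)$\Rightarrow$(vi)$\Rightarrow$(ii)$\Rightarrow$(i), using Theorem \ref{equality} for the last step.

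Condition (v) remains to be placed. I would show (iii)$\Rightarrow$(v), which is the same trivial argument as (iv)$\Rightarrow$(vi): $U(\nu)=0$ forces $|\hat\nu(\gamma)|\le0$. Then (v)$\Rightarrow$(vi) holds by restricting to absolutely continuous $\nu$, since $U(f)=U(\nu)$ for $d\nu=f\,dm_G$. So (v) sits in the chain (iii)$\Rightarrow$(v)$\Rightarrow$(vi), and all six conditions are equivalent.

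There is no real obstacle, since everything reduces to earlier results. The only point worth stating carefully is (vi)$\Rightarrow$(ii). Membership of a single $\gamma$ in $Sp(\mathbf U)$ is defined pointwise by \eqref{arveson}, so (vi) is a verbatim restatement of $\gamma\in Sp(\mathbf U)$. The nontrivial content, namely that the pointwise Arveson condition forces $\gamma$ into the support of $E$, has already been paid for in Theorem \ref{equality}, through the existence of $f\in L_1(G)$ with $\hat f$ supported in a neighborhood (\cite[Theorem 2.6.2]{Ru1}). I would simply cite that theorem rather than reprove it.
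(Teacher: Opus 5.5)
Your proof is correct and follows essentially the same route as the paper: (i)$\Leftrightarrow$(ii) by Theorem \ref{equality}, (i)$\Rightarrow$(iii) by Corollary \ref{connes0}, then the trivial implications (iii)$\Rightarrow$(iv), (iii)$\Rightarrow$(v), and (iv) or (v)$\Rightarrow$(vi), with (vi)$\Rightarrow$(ii) by the definition of the Arveson spectrum. The paper states the last step loosely as ``(vi) implies (i) by the definition''; you correctly make explicit that (vi) gives (ii) by definition and that reaching (i) then uses Theorem \ref{equality}.
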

\begin{proof} (i) $\Leftrightarrow$ (ii) by Theorem \ref{equality}.
(i) implies (iii) by Corollary \ref{connes0}. Obviously (iii) implies (iv) and (v), and
either (iv) or (v) implies (vi). (vi) implies (i) by the definition of Arveson's spectrum.
\end{proof}

{\bf Remarks.} 1. The equivalence of (ii) -- (vi), due to Connes, holds for bounded
representations in Banach spaces \cite[Proposition 8.1.9]{Pe} (condition (v), which is 
between (iii) and (vi), is not mentioned there). Our proof for the unitary case is 
different, being based on results for the spectral measure. 

2. An additional equivalent condition, also due to Connes, which justifies the term 
"spectrum", is given in \cite[Proposition 8.1.9(iii)]{Pe}.

\begin{prop} \label{SpU=Gamma}
Let $U(t)$ be  a unitary representation of $G$, with spectral measure $E(\cdot)$.
Then the following are equivalent:

(i) $Sp(\mathbf{U})=\Gamma$.

(ii) For every bounded complex measure $\nu \ne 0$ we have $\int_G U(t)d\nu(t) \ne 0$.

(iii) For every $0 \ne f \in L_1(G,m_G)$ we have $\int_G U(t)f(t)dm_G(t) \ne 0$.

(iv) $\Sigma(E)=\Gamma$.
\end{prop}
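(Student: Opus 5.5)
This is a direct assembly of earlier results. The plan is to observe that conditions (ii), (iii) and (iv) are exactly conditions (ii), (iii) and (i) of Proposition \ref{Sigma=Gamma}. That proposition already gives the equivalence (iv) $\Leftrightarrow$ (ii) $\Leftrightarrow$ (iii). It then remains only to attach (i) to this chain, and for that I will use Theorem \ref{equality}: it gives $Sp(\mathbf{U})=\Sigma(E)$, so (i) and (iv) are literally the same statement.

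For the record, the chain behind Proposition \ref{Sigma=Gamma} runs as follows, and I will only recall it.

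(iv) $\Rightarrow$ (ii). Suppose $U(\nu)=0$. Corollary \ref{connes0} gives $\hat\nu=0$ on $\Sigma(E)=\Gamma$, and the uniqueness theorem for Fourier--Stieltjes transforms then forces $\nu=0$.

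(ii) $\Rightarrow$ (iii). This is trivial.

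(iii) $\Rightarrow$ (iv). Suppose $\Sigma(E)\ne\Gamma$. I will take a nonzero $f\in L_1(G)$ with $\hat f$ vanishing on $\Sigma(E)$, which exists because $\Gamma\setminus\Sigma(E)$ is a nonempty open set. By \eqref{Psi}, $U(f)=0$, which contradicts (iii).

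Alternatively, (i) $\Leftrightarrow$ (iii) can be read straight from the definition \eqref{arveson}. If (iii) holds, the only $f$ with $U(f)=0$ is $f=0$, so the condition defining $Sp(\mathbf{U})$ is vacuous and $Sp(\mathbf{U})=\Gamma$. Conversely, if $Sp(\mathbf{U})=\Gamma$ and $U(f)=0$, then $\hat f\equiv 0$, hence $f=0$ by uniqueness. This route avoids Theorem \ref{equality} entirely and gives a second check on the result.

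There is no real obstacle here. The only substantive input is the existence of a nonzero $L_1$ function whose Fourier transform vanishes on a proper closed subset of $\Gamma$, and that was already invoked in Proposition \ref{Sigma=Gamma}.
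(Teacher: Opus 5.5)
Your proof is correct and matches the paper's: Proposition \ref{Sigma=Gamma} gives (ii)$\Leftrightarrow$(iii)$\Leftrightarrow$(iv), and Theorem \ref{equality} identifies (i) with (iv). Your alternative derivation of (i)$\Leftrightarrow$(iii) directly from the definition \eqref{arveson} and the uniqueness theorem is also valid, and it shows that Theorem \ref{equality} is not actually needed for this proposition.
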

\begin{proof} Combine Proposition \ref{Sigma=Gamma} with Theorem \ref{equality}.
\end{proof}

{\bf Remark.} Proposition \ref{example1} provides an example where $Sp(\mathbf{U})=\Gamma$.
\medskip

For the sake of completeness, we state the next result, proved in 
\cite[p. 217, Corollary 4]{Ar2} for bounded representations in Banach spaces.

\begin{prop}
Let $\mathbf{U}$ be a unitary representation. If a bounded measure $\nu$ has
$\hat\nu(\gamma) \ne 0$ for every $\gamma \in Sp(\mathbf{U})$, then $U(\nu)$ is 
one-to-one and has a dense range.
\end{prop}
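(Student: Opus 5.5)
We prove the statement through the spectral representation \eqref{Psi}: $U(\nu)=\Psi(\hat\nu)=\int_\Gamma \hat\nu(\gamma)E(d\gamma)$. This operator is normal, with adjoint $U(\nu)^*=\Psi(\overline{\hat\nu})$. By Theorem \ref{equality}, $Sp(\mathbf U)=\Sigma(E)$, so the hypothesis says that $\hat\nu(\gamma)\neq 0$ for every $\gamma\in\Sigma(E)$.

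First we show that $U(\nu)$ is one-to-one. Let $x\in H$ with $U(\nu)x=0$. By the norm formula for $\Psi$,
$$
0=\|U(\nu)x\|^2=\int_\Gamma |\hat\nu(\gamma)|^2\,\langle E(d\gamma)x,x\rangle .
$$
The measure $m_x=\langle E(\cdot)x,x\rangle$ is supported in $\Sigma(E)$, since $E(\Gamma\setminus\Sigma(E))=0$. It follows that $|\hat\nu|^2=0$ holds $m_x$-a.e. on $\Sigma(E)$. But $\hat\nu$ vanishes nowhere on $\Sigma(E)$, so $m_x(\Sigma(E))=0$. Equivalently, $\|E(\Sigma(E))x\|^2=0$. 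Since $E(\Sigma(E))=I$, we get $x=0$.

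The one point that needs care is the passage from ``$\Gamma\setminus\Sigma(E)$ is the maximal open $E$-null set'' to $E(\Gamma\setminus\Sigma(E))=0$. When $\Gamma$ is not second countable, the open set $\Gamma\setminus\Sigma(E)$ is an uncountable union of null open sets. For this step we use the regularity of $E$, namely inner regularity of each $m_x$ on open sets by compact sets. A compact subset of $\Gamma\setminus\Sigma(E)$ is covered by finitely many open $E$-null sets, so it has $m_x$-measure zero. Inner regularity then gives $m_x(\Gamma\setminus\Sigma(E))=0$. This is the same fact already used in the proofs of Proposition \ref{sigma-E} and Theorem \ref{continuous}, so we may quote it.

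Next we show that the range is dense. For the normal operator $U(\nu)$ we have $\overline{U(\nu)H}=(\ker U(\nu)^*)^\perp$. The adjoint $U(\nu)^*=\Psi(\overline{\hat\nu})$ also vanishes nowhere on $\Sigma(E)$. The argument above therefore gives $\ker U(\nu)^*=\{0\}$, and so $U(\nu)H$ is dense. Alternatively, normality yields $\|U(\nu)^*x\|=\|U(\nu)x\|$, so the two kernels coincide.

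One subtlety should be noted. Although $\hat\nu$ does not vanish on $\Sigma(E)$, it may tend to $0$ there when $\Sigma(E)$ is not compact. In that case $0\in\overline{\hat\nu(\Sigma(E))}=\sigma(U(\nu))$ by Corollary \ref{wsmt}, so $U(\nu)$ need not be invertible. This is why the conclusion is only injectivity with dense range.
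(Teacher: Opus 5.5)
Your argument is correct, but it does not follow the paper, which gives no proof of this proposition. The authors state it ``for the sake of completeness'' and refer to Arveson's Corollary 4, which is proved for bounded representations on Banach spaces using the machinery of the Arveson spectrum. No spectral measure is available in that setting.

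You stay inside the Hilbert-space framework of Section 3 instead. From \eqref{Psi} and the norm formula $\|\Psi(f)x\|^2=\int_\Gamma|f|^2\,dm_x$, your computation gives $\ker U(\nu)=E(\hat\nu^{-1}(0))H$. This kernel is $\{0\}$ because the closed set $\hat\nu^{-1}(0)$ misses $\Sigma(E)$ and $E(\Gamma\setminus\Sigma(E))=0$. Normality of $U(\nu)$, or the same computation applied to $\Psi(\overline{\hat\nu})$, then gives density of the range.

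This route fits the approach the authors advertise (spectral measures plus classical Fourier analysis), and it is elementary. It also yields a sharper criterion: $U(\nu)$ is injective with dense range if and only if $E(\hat\nu^{-1}(0))=0$. What Arveson's approach buys is generality, since it covers non-unitary bounded representations on Banach spaces.

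Two small remarks. First, to transfer the hypothesis from $Sp(\mathbf{U})$ to $\Sigma(E)$ you only need the easy inclusion $\Sigma(E)\subset Sp(\mathbf{U})$ of Proposition \ref{inclusion}, not the full equality of Theorem \ref{equality}. Second, your explicit use of inner regularity of each $m_x$ to obtain $E(\Gamma\setminus\Sigma(E))=0$ when $\Gamma$ is not second countable is a genuine point. The paper's definition of $\Sigma(E)$ passes over it as ``easily shown''.
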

\bigskip

\section{Unitary representations of $\mathbb Z$}

In this section we identify precisely the spectra of averages of unitary representations of
$\mathbb Z$, which are averages of the powers of a unitary operator. 

\begin{prop} \label{unitary}
Let $U$ be a unitary operator on a complex Hilbert space. Then for every probability 
$\mu:=(p_k)_{k \in \mathbb Z}$ on $\mathbb Z$, 
$$
\sigma\Big(\sum_{k\in\mathbb Z} p_k U^k\Big) =\hat\mu(\sigma(U)):=
\Big\{\sum_{k\in\mathbb Z} p_k \lambda^k: \lambda \in \sigma(U)\Big\}.
$$
\end{prop}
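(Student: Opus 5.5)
The plan is to apply Corollary \ref{wsmt} to $G=\mathbb Z$, with $\Gamma=\mathbb T$ identified by sending $\lambda\in\mathbb T$ to the character $k\mapsto\lambda^k$. The representation is $U(k)=U^k$, and we set $\nu=\mu$. Then
$$
\hat\mu(\lambda)=\sum_{k\in\mathbb Z}p_k\lambda^k, \qquad \lambda\in\mathbb T,
$$
and Corollary \ref{wsmt} gives
$$
\sigma\Big(\sum_k p_kU^k\Big)=\overline{\hat\mu(\Sigma(E))}.
$$
Two steps remain. The first is to identify $\Sigma(E)$ with $\sigma(U)$. The second is to remove the closure, using compactness.

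For the first step, Corollary \ref{wsmt} with $\nu=\delta_1$ already gives $\sigma(U)=\overline{\{\gamma(1):\gamma\in\Sigma(E)\}}$. Under the identification above, $\gamma(1)=\lambda$. Hence $\sigma(U)=\overline{\Sigma(E)}=\Sigma(E)$, because $\Sigma(E)$ is closed. Equivalently, the spectral measure $E$ of the representation $k\mapsto U^k$ given by \eqref{ambrose} is just the spectral measure of the unitary operator $U$ on $\mathbb T$, and by uniqueness its support is $\sigma(U)$.

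For the second step, note that $\hat\mu$ is continuous on $\mathbb T$. Indeed, the series $\sum_k p_k\lambda^k$ converges uniformly on $\mathbb T$, since $\sum_k p_k=1$ and $|\lambda^k|=1$ (this is also \cite[Theorem 1.3.3(a)]{Ru1}). The set $\sigma(U)$ is a closed subset of the compact set $\mathbb T$, so it is compact. Therefore $\hat\mu(\sigma(U))$ is compact, hence closed, and the closure can be dropped:
$$
\sigma\Big(\sum_k p_kU^k\Big)=\overline{\hat\mu(\sigma(U))}=\hat\mu(\sigma(U)).
$$

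The only mildly delicate point is the identification $\Sigma(E)=\sigma(U)$ through the duality $\widehat{\mathbb Z}\cong\mathbb T$, and the $\nu=\delta_1$ case of Corollary \ref{wsmt} handles it directly. The fact that $\Gamma=\mathbb T$ is compact is exactly what allows the closure to be dropped, in contrast with the $\mathbb R$ case.
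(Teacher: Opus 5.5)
Your proof is correct, but it does not follow the paper's own proof. It is instead the deduction the authors mention in Remark~2 after the proposition: Proposition~\ref{unitary} follows from Corollary~\ref{wsmt} once one knows $\Sigma(E)=\sigma(U)$.

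The paper argues directly with the Dunford--Schwartz spectral theory of the single unitary operator $U$, writing $V=f(U)$ with $f=\hat\mu|_{\sigma(U)}$.
\begin{itemize}
\item The inclusion $\sigma(V)\subset f(\sigma(U))$ comes from the formula $\sigma(f(U))=\bigcap_{E(\Delta)=I}\overline{f(\Delta)}$.
\item The reverse inclusion uses the fact that the spectral measure of $V$ is $E\circ f^{-1}$. If $\zeta\notin\sigma(V)$, take an open neighbourhood $\Delta$ of $\zeta$ missing $\sigma(V)$. Then $E(f^{-1}(\Delta))=0$. Since $f^{-1}(\Delta)$ is relatively open in $\sigma(U)$, which is the support of $E$, it must be empty.
\end{itemize}
That argument is self-contained at the level of one normal operator. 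It needs neither the Stone--Ambrose theorem for LCA groups nor the essential-range theorem behind Theorem~\ref{continuous}, and it is the template reused for $\mathbb R$ in Proposition~\ref{spectrum}.

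Your route is much shorter once Corollary~\ref{wsmt} is available, and there is no circularity, since that corollary does not depend on Proposition~\ref{unitary}. Your use of the case $\nu=\delta_1$ to obtain $\Sigma(E)=\sigma(U)$ is a tidy touch. It works through the homeomorphism $\gamma\mapsto\gamma(1)$ of $\widehat{\mathbb Z}$ onto $\mathbb T$ together with closedness of $\Sigma(E)$. This avoids appealing, as the paper's remark does, to uniqueness of the spectral measure and to the fact that a unitary operator's spectral measure is supported exactly on its spectrum. Dropping the closure via continuity of $\hat\mu$ on $\mathbb T$ and compactness of $\sigma(U)$ is exactly right.
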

\begin{proof} Let $E(\cdot)$ be the spectral measure of $U$; its support is $\sigma(U)$,
by \cite[Lemma X.2.3(i)]{DS}.
 The function $\phi(\zeta):=\sum_{k\in\mathbb Z} p_k\zeta^k$ is continuous on 
the closed disk $\overline{\mathbb D}$, and we denote by $f$ its restriction to
$\sigma(U) \subset \mathbb T$. Then $V:= \sum_k p_k U^k$ is $f(U)$. 
Note that $V$ is a normal contraction, and $\sigma(V) \subset \overline{\mathbb D}$.
By \cite[Corollary X.2.9(iii)]{DS}, applied to $\sigma(U)$ with its Borel sets,
$$
\sigma(V)=\bigcap_{\{\Delta: E(\Delta)=I\}} \overline{f(\Delta)}.
$$
Since $E(\sigma(U))=I$ and $f(\sigma(U))$ is compact, we obtain 
$\sigma(V) \subset f(\sigma(U))=\hat\mu(\sigma(U))$.

Denote by $E_V(\cdot)$  the spectral measure of $V$, which is supported on $\sigma(V)$.
Now let $ \zeta \notin \sigma(V)$. Since $\sigma(V)$ is closed, there exists an open set 
$\Delta \subset \overline{\mathbb D}$ such that $\zeta \in \Delta$ and 
$\Delta\cap\sigma(V)=\emptyset$. Then $E_V(\Delta)=0$. By continuity of $f$,
$f^{-1}(\Delta)$ is open. By \cite[Corollary X.2.10]{DS}, $E_V(\Delta)=E(f^{-1}(\Delta))$; 
hence $f^{-1}(\Delta)\cap \sigma(U)=\emptyset$, which implies that for any
$\lambda \in \sigma(U)$, $f(\lambda)\ne \zeta$; thus $\zeta \notin f(\sigma(U))$.
 This yields $f(\sigma(U)) \subset \sigma(V)$.
\end{proof}

{\bf Remarks.} 1. Another proof of of Proposition \ref{unitary} can be obtained by adapting 
Dungey's proof of \cite[Theorem 2.1]{D} to invertible operators  $T$ in Banach spaces
 with both $T$ and $T^{-1}$ power-bounded and measures in $L^1(\mathbb Z)$.

2. Since the support of the spectral measure of $U$ is $\sigma(U)$, we have 
$\Sigma(E)= \sigma(U)$. Then Proposition \ref{unitary} follows from the general 
Corollary \ref{wsmt}.     

\begin{cor} \label{shamo}
Let $U$ be a unitary operator on $H$. Then the following are equivalent:

(i) $\sigma(U)= \mathbb T$.

(ii) For every probability $\mu=(p_k)_{k\in\mathbb Z}$ on $\mathbb Z$,
$\ \sigma(\sum_k p_k U^k) =\hat\mu(\mathbb T)$.

(iii) For every {\rm strictly aperiodic} probability $\mu=(p_k)_{k\in\mathbb Z}$ on 
$\mathbb Z$, $\ \sigma(\sum_k p_k U^k) =\hat\mu(\mathbb T)$.
\end{cor}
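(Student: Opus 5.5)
The plan is to prove (i)$\Rightarrow$(ii)$\Rightarrow$(iii)$\Rightarrow$(i). The first two implications are immediate. For (i)$\Rightarrow$(ii), Proposition \ref{unitary} gives $\sigma(\sum_k p_kU^k)=\hat\mu(\sigma(U))=\hat\mu(\mathbb T)$ whenever $\sigma(U)=\mathbb T$. The implication (ii)$\Rightarrow$(iii) holds trivially, since (iii) only restricts the class of probabilities.

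The content is in (iii)$\Rightarrow$(i). I would argue by contraposition. Suppose $\sigma(U)\ne\mathbb T$. Since $\sigma(U)$ is closed, there are $\lambda_0\in\mathbb T\setminus\sigma(U)$ and an open arc around $\lambda_0$ missing $\sigma(U)$. The goal is to exhibit one strictly aperiodic $\mu$ on $\mathbb Z$ with $\hat\mu(\lambda_0)\notin\hat\mu(\sigma(U))$. Then, by Proposition \ref{unitary}, $\sigma(\sum_kp_kU^k)=\hat\mu(\sigma(U))\ne\hat\mu(\mathbb T)$, contradicting (iii).

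The natural candidate is $\mu=\frac12(\delta_0+\delta_1)$. It is strictly aperiodic: its support is $\{0,1\}$ and $\mathcal S-\mathcal S$ generates $\mathbb Z$. Equivalently, $|\hat\mu(\lambda)|=|1+\lambda|/2<1$ for $\lambda\ne1$. Its transform $\hat\mu(\lambda)=\frac12(1+\lambda)$ is injective on $\mathbb T$, since it is an affine map of $\mathbb C$. Hence $\hat\mu(\lambda_0)\notin\hat\mu(\sigma(U))$, and in fact $\hat\mu(\sigma(U))$ is a proper subset of the circle $\hat\mu(\mathbb T)=\{|z-\tfrac12|=\tfrac12\}$. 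This completes the contradiction.

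The only point needing care is the strict aperiodicity check, which uses the criterion \cite[Theorem 3.3]{BJR0} recalled in Section \ref{powers}. Since $\frac12(1+\lambda)$ is a convex combination of two points of $\overline{\mathbb D}$, it has modulus $1$ only if $\lambda=1$, i.e.\ only at the trivial character. So I do not expect a genuine obstacle; any injective $\hat\mu$ on $\mathbb T$ with strictly aperiodic $\mu$ would work equally well.
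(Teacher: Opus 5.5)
Your proposal is correct and follows essentially the paper's proof: the same chain (i)$\Rightarrow$(ii)$\Rightarrow$(iii)$\Rightarrow$(i) and the same witness $\mu=\frac12(\delta_0+\delta_1)$. The only difference is that the paper obtains $\sigma(\frac12 I+\frac12 U)=\{\frac12(1+\lambda):\lambda\in\sigma(U)\}$ by direct computation, where you invoke Proposition \ref{unitary}; injectivity then puts $\frac12(1+\lambda_0)$ in $\hat\mu(\mathbb T)$ but not in $\sigma(V)$, exactly as in the paper.
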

\begin{proof} (i) implies (ii) by Proposition \ref{unitary} (or by Proposition \ref{sigma-V}).

Obviously (ii) implies (iii).
Assume (iii). If $\sigma(U) \ne \mathbb T$, then there exists  
$\gamma \in\mathbb T$ which is not in $\sigma(U)$. Take
$\mu=\frac12 \delta_0 +\frac12\delta_1$, i.e. $p_0=p_1=\frac12$, other $p_k$ 
zero, and put $V= \frac12I+\frac12U$. Then $\mu$ is stricltly aperiodic, and 
$\hat\mu(\mathbb T)=\{z\in \mathbb C: |z-\frac12|=\frac12\}$. Direct computation
yields $\sigma(V)=\{z: z=\frac12(1+ \lambda),\, \lambda \in \sigma(U)\}$. Sinc
$\gamma \notin \sigma(U)$, $\frac12(1+\gamma) \notin \sigma(V)$, though it is in
$\hat\mu(\mathbb T)$. This contradicts (iii). Hence (iii) implies (i).
\end{proof}

{\bf Remarks.} 1.  The equivalence of (i) and (ii) in Corollary \ref{shamo} follows also
 from the general result in Proposition \ref{Sigma=Gamma}; the above proof,
including the additional condition (iii), is more direct.

2. A. Ionescu Tulcea (Bellow) \cite{IT} and independently Foia\c s 
\cite[Corollaire 1]{Fo} proved that the spectrum of the unitary operator induced on $L^2$
by an ergodic measure preserving transformation of a probability space is $\mathbb T$.

3. Arveson \cite{Ar} deduced from his results that a unitary $U$ has spectrum
$\mathbb T$ if and only if for every $n\ge 1$ there is a vector $x \ne 0$ such that
$x,Ux,\dots,U^nx$ are orthogonal.

\begin{cor} \label{sap}
Let $U$ be a unitary operator on a complex Hilbert space $H$. Let
 $\mu=(p_k)_{k \in \mathbb Z}$ be a strictly aperiodic probability on $\mathbb Z$; then 
$V:=\sum_k p_kU^k$ satisfies $\|V^n(I-V)\| \to 0$, and $V^n$ converges strongly.
\end{cor}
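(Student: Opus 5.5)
By Proposition \ref{kt}, the plan is to show that $\sigma(V)\cap\mathbb T\subset\{1\}$. Here $V$ is a normal contraction, being $f(U)$ with $f$ continuous on $\mathbb T$ as in the proof of Proposition \ref{unitary}. Once we have $\|V^n(I-V)\|\to 0$, strong convergence of $V^n$ follows from the ergodic decomposition $H=\{x: Vx=x\}\oplus\overline{(I-V)H}$. On the fixed space $V^n$ is the identity. On $(I-V)H$ we have $V^n(I-V)y\to 0$, and this extends to the closure because $\|V^n\|\le 1$.

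To control the spectrum, invoke Proposition \ref{unitary}, which gives $\sigma(V)=\hat\mu(\sigma(U))\subset\hat\mu(\mathbb T)$, where $\hat\mu(\lambda)=\sum_k p_k\lambda^k$ for $\lambda\in\mathbb T$. It then suffices to show that $|\hat\mu(\lambda)|=1$ with $\lambda\in\mathbb T$ forces $\hat\mu(\lambda)=1$. The dual of $\mathbb Z$ is $\mathbb T$, with $\lambda\in\mathbb T$ acting as the character $k\mapsto\lambda^k$ and the trivial character being $\lambda=1$. By the characterization of strict aperiodicity recalled from \cite[Theorem 3.3]{BJR0}, $|\hat\mu(\lambda)|<1$ for every $\lambda\ne1$. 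For $\lambda=1$ we have $\hat\mu(1)=\sum_kp_k=1$. Hence $\hat\mu(\mathbb T)\cap\mathbb T=\{1\}$, so $\sigma(V)\cap\mathbb T\subset\{1\}$, and Proposition \ref{kt} yields $\|V^n(I-V)\|\to0$.

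Compactness of $\mathbb T$ is the one point to check. It lets us use $\hat\mu(\sigma(U))$ itself rather than a closure, and $\hat\mu$ is continuous on $\mathbb T$ because $\sum|p_k|<\infty$. So no limit points of $\hat\mu(\Gamma)$ on the unit circle arise; this is exactly what fails in Example 2, where $\Gamma=\mathbb R$ is not compact. Apart from that, every step is a direct citation.

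The argument can also be phrased via Corollary \ref{M-0} or Proposition \ref{kt-V}, since $\overline{\hat\mu(\mathbb T)}=\hat\mu(\mathbb T)$ meets $\mathbb T$ only at $1$. That route applies Proposition \ref{kt-V} directly with $G=\mathbb Z$, $\Gamma=\mathbb T$, and $U(n)=U^n$.
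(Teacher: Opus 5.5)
Your proof is correct and follows the paper's argument: you use Proposition~\ref{unitary} and Proposition~\ref{kt} to reduce to showing $\sigma(V)\cap\mathbb T\subset\{1\}$, and you get strong convergence from the ergodic decomposition. The only difference is that you cite the characterization of \cite{BJR0} ($|\hat\mu(\lambda)|<1$ for $\lambda\ne 1$), whereas the paper proves the needed direction inline: by strict convexity, $|\sum_k p_k\lambda^k|=1$ forces $\lambda^k$ to be constant on the support $\mathcal S$, so $\mathcal S-\mathcal S\subset\{n:\lambda^n=1\}$, a subgroup that strict aperiodicity forces to be $\mathbb Z$, whence $\lambda=1$.
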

\begin{proof} We show that $\sigma(V)\cap\mathbb T \subset\{1\}$. Assume 
$\gamma \in\sigma(V)$ satisfies $|\gamma|=1$. Then, by Proposition \ref{unitary},
 $\gamma=\sum_k p_k \lambda^k$ for some $\lambda \in \sigma(U) \subset \mathbb T$. 
By uniform convexity of $\mathbb T$, we obtain that
$\lambda^k =\gamma$ for every $k$ in $\mathcal S$, the support of $\mu$.
Then $\lambda^{k-j}=1$ for $k,j \in \mathcal S$, so 
$\mathcal S-\mathcal S \subset \{n: \lambda^n=1\}$, which is a  group, therefore it
is $\mathbb Z$ by  strict aperiodicity; hence $\lambda=1$, so $\gamma=1$.
 By Proposition \ref{kt} $\|V^n(I-V)\| \to 0$. By the ergodic decomposition 
$H=\{x: Vx=x\}\oplus \overline{(I-V)H}$, which yields strong convergence of $V^n$.
\end{proof}

{\bf Remark.} Since every signed measure on $\mathbb Z$ is absolutely continuous with 
respect to the counting (Haar) measure, strict aperiodicity yields 
Corollary \ref{sap} also by Proposition \ref{ac}.
\medskip

For the sake of completeness we include the following theorem.

\begin{thm}
Let $\theta$ be an invertible measure preserving transformation of an atomless probability 
space $(\Omega,\mathcal B,m)$, and $U$ the induced unitary operator on $L^2(\Omega,m)$.
Let $\mu:=(p_k)_{k\in\mathbb Z}$ be an adapted probability on $\mathbb Z$ and put 
$V:=\sum_k p_k U^k$. Then the following are equivalent;

(i) $\sigma(V)$ is contained in a closed Stolz region.

(ii) $V$ is Ritt.

(iii) $\mu$ satisfies the bounded angular ratio condition \eqref{BAR}.

(iv) For every $f \in L^2(\Omega,m)$,  $\ V^nf$ converges a.e.

(v) For every $f \in L^p(\Omega,m)$, $1<p< \infty$, $\ V^nf$ converges a.e.
\end{thm}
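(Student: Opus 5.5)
The plan is to prove (i)$\Leftrightarrow$(ii)$\Leftrightarrow$(iii) using the spectral results established above, and then to link (iii) with (iv) and (v) by quoting the known a.e. convergence results.

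The key fact is $\sigma(U)=\mathbb T$. The space is atomless, so $\theta$ is aperiodic in the sense that no single power $\theta^n$, $n\ne0$, is the identity. Without ergodicity this does not directly give $\sigma(U)=\mathbb T$ via Foia\c s or Ionescu Tulcea. Instead I would argue as follows. Since $U$ is unitary, if $\sigma(U)\ne\mathbb T$ then $\Sigma(E)\ne \Gamma=\mathbb T$. Proposition \ref{Sigma=Gamma} then gives a nonzero $f\in\ell_1(\mathbb Z)$ with $\sum_k f(k)U^k=0$. One has to rule this out using the atomlessness of $m$. The plan is to use Rokhlin's lemma on the non-periodic part of $\theta$, which gives a nonzero function $g$ such that $g,Ug,\dots,U^Ng$ are orthogonal for every $N$; by Arveson's criterion (Remark 3 after Corollary \ref{shamo}) this forces $\sigma(U)=\mathbb T$. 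The periodic points need care. If $\theta$ has periodic orbits of period $q$ on a set of positive measure, this set is atomless, so it contains a subset $B$ of positive measure such that the sets $B,\theta B,\dots,\theta^{q-1}B$ are disjoint. On functions supported on these sets, $U$ acts like a cyclic shift tensored with an infinite-dimensional space, and its spectrum is then all $q$-th roots of unity. Combining the different periods, and including the aperiodic part, is the main obstacle; I expect this step to need the most care. A more economical alternative is to note that any $\gamma\in\mathbb T$ can be approximated either from the aperiodic part or from the periodic parts of unbounded period. If periods are bounded and there is no aperiodic part, however, one gets only a finite union of groups of roots of unity, and I would have to check that case against the statement. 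Once $\sigma(U)=\mathbb T$ is established, Proposition \ref{unitary} gives $\sigma(V)=\hat\mu(\mathbb T)$.

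For (i)$\Leftrightarrow$(ii), note that $V$ is a normal contraction, so Proposition \ref{stolz} applies directly. For (iii)$\Rightarrow$(i), I would use Corollary \ref{H-ritt}. For (i)$\Rightarrow$(iii), the argument is that $\hat\mu(\mathbb T)=\sigma(V)\subset\Delta$ for a Stolz region $\Delta$, and then $|1-\zeta|\le C(1-|\zeta|)$ on $\Delta$ gives \eqref{BAR}, exactly as in the corollary following Proposition \ref{sigma-V}.

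For (iii)$\Rightarrow$(v): $V$ is a positive contraction on each $L^p$, and Theorem \ref{llm} (via \cite[Theorem 6]{LLM}) makes it Ritt on $L^p$, so a.e. convergence follows from Theorem \ref{lmx}. The implication (v)$\Rightarrow$(iv) is trivial. For (iv)$\Rightarrow$(iii), I would invoke the converse of Bellow--Jones--Rosenblatt \cite{BJR}: for an adapted $\mu$ on $\mathbb Z$ acting on an atomless (aperiodic) system, a.e. convergence of $V^nf$ for all $f\in L^2$ forces the bounded angular ratio condition. This uses their strong sweeping out construction, which transfers to any aperiodic system by Rokhlin's lemma.
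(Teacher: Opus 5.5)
Your proposal has a genuine gap at the step you call the key fact, and the gap cannot be closed under the stated hypotheses. Atomlessness of $m$ does not make $\theta$ aperiodic. For example, $\theta=\mathrm{id}$ on $[0,1]$ gives $U=I$ and $\sigma(U)=\{1\}$, and a rotation of $\mathbb T$ by $2\pi/q$ gives the $q$-th roots of unity. In the first case $f=\delta_0-\delta_1$ is a nonzero element of $\ell_1(\mathbb Z)$ with $\sum_k f(k)U^k=0$, so it cannot be ``ruled out''. The bounded-period case you postponed is in fact a counterexample to the statement as written. Take $\theta=\mathrm{id}$ and the adapted $\mu=\frac12(\delta_0+\delta_1)$. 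Then $V=I$, so (i), (ii), (iv), (v) all hold. On the other hand, $|1-\hat\mu(\e^{i\phi})|=|\sin(\phi/2)|$ and $1-|\hat\mu(\e^{i\phi})|=1-|\cos(\phi/2)|$, so \eqref{BAR} fails. Hence no proof of (i)$\Rightarrow$(iii) or (iv)$\Rightarrow$(iii) can rest on atomlessness alone; $\theta$ must be assumed aperiodic (or ergodic).

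The paper's proof uses this assumption tacitly. Its chain is:
(i)$\Leftrightarrow$(ii) by Proposition \ref{stolz};
(iii)$\Rightarrow$(ii) by Corollary \ref{H-ritt};
(ii)$\Rightarrow$(iv) by Theorem \ref{lmx}, since $V$ is a positive Ritt contraction of $L^2$;
(iv)$\Rightarrow$(iii) by Losert's strong sweeping out theorem \cite{Lo}, which concerns aperiodic systems;
(iii)$\Rightarrow$(v) by \cite[Theorem 1.6]{BJR1}.
Your own (iv)$\Rightarrow$(iii) step makes the same identification of ``atomless'' with ``aperiodic''. Also, for arbitrary adapted $\mu$ failing \eqref{BAR}, that converse is Losert's theorem, which is what the paper cites, rather than the Bellow--Jones--Rosenblatt sweeping-out results.

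Once aperiodicity is added, your detour becomes correct but superfluous. It is correct because Rokhlin's lemma gives, for each $N$, a set $B$ with $m(B)>0$ and $B,\theta B,\dots,\theta^N B$ disjoint. Then $1_B,U1_B,\dots,U^N1_B$ are orthogonal, and $\sigma(U)=\mathbb T$ by Arveson's criterion, with no periodic parts to treat. Proposition \ref{unitary} then gives $\sigma(V)=\hat\mu(\mathbb T)$, and your spectral proof of (i)$\Rightarrow$(iii) goes through; this links the spectral and Fourier conditions directly, without passing through a.e. convergence. It is superfluous because your cycle already uses (iv)$\Rightarrow$(iii). Adding (ii)$\Rightarrow$(iv) from Theorem \ref{lmx} closes the cycle without identifying $\sigma(U)$ at all, which is the paper's route. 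Losert's theorem is needed in both approaches. Your (iii)$\Rightarrow$(v) via Theorem \ref{llm} is a valid substitute for \cite{BJR1}.
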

\begin{proof} The equivalence of (i) and (ii) is in Proposition \ref{stolz}.

(iii) implies (ii) by Corollary \ref{H-ritt}. and (ii) implies (iv) by Theorem \ref{lmx}.

(iv) implies (iii) by Losert \cite{Lo}, and (iii) implies (v) by  Bellow et al. 
\cite[Theorem 1.6]{BJR1}
\end{proof}

{\bf Remark.} If $\mu$ is not adapted, its support generates a group $d\mathbb Z$,
and the theorem applies to $\theta^d$.
\medskip

\section{Unitary representations of $\mathbb R$} \label{R}


Let $B$ be a self-adjoint (possibly unbounded) operator on $H$, with $E(\cdot)$ its spectral
 measure, which is supported in $\sigma(B) \subset \mathbb R$ \cite[Theorem XII.2.3]{DS}.
For a a bounded measurable $f: \mathbb R \longrightarrow \mathbb C$, the operator
$f(B):=\int_\mathbb R f(\lambda) E(d\lambda)= \int_{\sigma(B)} f(\lambda) E(d\lambda)$
is a well-defined (at least weakly) bounded operator. See Lemma \ref{f(b)} below.

Given a continuous unitary representation $\{U(t)\}_{t \in\mathbb R}$, by Stone's theorem
\cite[Theorem XII.6.1]{DS} there exists a self-adjoint operator $B$, with spectral measure
$E(\cdot)$, such that $U(t)=\e^{itB}$; then by the definitions
\begin{equation} \label{stone}
U(t)= \int_{\sigma(B)}  \e^{it\lambda}E(d\lambda) =
\int_{\mathbb R} \e^{it\lambda}E(d\lambda).
\end{equation}

We need the following general lemma.

\begin{lem} \label{f(b)}
Let $B$ be a self-adjoint operator with spectral measure $E(\cdot)$, and let
 $f: \mathbb R \longrightarrow \mathbb C$ be bounded measurable. Then:

(i) $f(B) :=\int_{\sigma(B)} f(\lambda) E(d\lambda)$ is a bounded normal operator, with
 $\|f(B)\| \le \underset{\lambda\in\sigma(B)}\sup |f(\lambda)|$.

(ii) $\sigma(f(B)) \subset \overline{f(\sigma(B))}$.

(iiii) The spectral measure of $f(B)$, defined on the Borel sets of $\mathbb C$, is
$E_f(\Delta)=E(f^{-1}(\Delta))$, $\Delta \subset \mathbb C$ Borel.
\end{lem}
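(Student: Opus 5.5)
The plan is to treat $f(B)$ as an instance of the functional calculus $\Psi(\cdot)$ for the regular spectral measure $E(\cdot)$ on $\mathbb R$, and then to reuse the facts already assembled in Section~\ref{spectra}. Here $\mathbb R$ plays the role of $\Gamma$, which is legitimate because $\mathbb R$ is self-dual. The support of $E$ is $\sigma(B)$ by \cite[Theorem XII.2.3]{DS}, so $E(\mathbb R\setminus\sigma(B))=0$. Hence
$$
f(B)=\int_{\mathbb R} f\,dE=\Psi(f),
$$
and $\Sigma(E)\subset\sigma(B)$; in fact the two sets are equal, since $\sigma(B)$ is exactly the support.

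For (i), I will invoke \cite[Theorem 13.25]{Ru}: for bounded $f$ the operator $\Psi(f)$ is bounded and normal, with $\Psi(f)^*=\Psi(\bar f)$. For the norm bound I will use the identity $\|\Psi(f)x\|^2=\int|f|^2\,dm_x$, where $m_x=\langle E(\cdot)x,x\rangle$. Since $m_x$ is carried by $\sigma(B)$ and has total mass $\|x\|^2$, this gives
$$
\|f(B)x\|^2\le \sup_{\sigma(B)}|f|^2\,\|x\|^2 .
$$

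For (ii), I will apply Proposition~\ref{measurable} directly to the measurable function $f$. It gives
$$
\sigma(f(B))\subset\overline{f(\Sigma(E))}\subset\overline{f(\sigma(B))}.
$$
If one prefers an independent argument, here is one. Take $\zeta\notin\overline{f(\sigma(B))}$ and set $\delta=\operatorname{dist}\bigl(\zeta,f(\sigma(B))\bigr)>0$. The function $g=(f-\zeta)^{-1}\chi_{\sigma(B)}$ is bounded, and $\Psi(g)$ inverts $f(B)-\zeta I$ by multiplicativity $\Psi(gh)=\Psi(g)\Psi(h)$. Multiplicativity holds because $E(\sigma(B))=I$.

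For (iii), I define $E_f(\Delta):=E(f^{-1}(\Delta))$ for Borel $\Delta\subset\mathbb C$. This is a projection-valued measure: countable additivity, multiplicativity and $E_f(\mathbb C)=I$ all pass through preimages. The change-of-variables formula for the scalar measures $m_{x,y}$, exactly as in \eqref{change-of-var}, then gives
$$
\langle f(B)x,y\rangle=\int_{\mathbb C}\zeta\,\langle E_f(d\zeta)x,y\rangle .
$$
Since $f$ is bounded, $E_f$ is carried by a compact set, so the integral is a bounded normal operator. By uniqueness of the spectral measure of a normal operator, $E_f$ is the spectral measure of $f(B)$; this is also \cite[Corollary X.2.10]{DS} in the bounded case.

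The main obstacle is regularity. Uniqueness of the spectral measure is stated for regular Borel spectral measures on compact subsets of $\mathbb C$, so I must check that $E_f$ is regular. I would handle this by noting that each $\langle E_f(\cdot)x,x\rangle$ is a finite Borel measure on a compact metric subset of $\mathbb C$, and every such measure is automatically regular. This is also where it is convenient that $\mathbb R$ is $\sigma$-compact, which avoids the subtleties of general $\Gamma$.
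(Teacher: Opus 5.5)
Your proposal is correct. For (i) and (iii) it is essentially the paper's argument. Normality comes from $\Psi(f)^*=\Psi(\bar f)$; you cite \cite[Theorem 13.25]{Ru} where the paper cites \cite[Corollary X.2.9]{DS}. Part (iii) is the same push-forward $E(f^{-1}(\cdot))$, change of variables, and uniqueness of the spectral measure of a bounded normal operator. Your version of (iii) is a little cleaner: the paper's truncations $f_n$ and its bounded convergence step are superfluous, since $f_n=f$ once $n\ge\sup|f|$. You also make explicit the regularity of $E_f$ that the paper's appeal to uniqueness tacitly requires.

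The real difference is in (ii). The paper reads the inclusion off the formula $\sigma(f(B))=\bigcap_{E(\Delta)=I}\overline{f(\Delta)}$ of \cite[Theorem XII.2.9(b)]{DS}. You instead use Proposition~\ref{measurable}, i.e.\ ``spectrum $=$ essential range'' from \cite[Theorem 13.27(c)]{Ru}; the paper itself points out this route right after the lemma. Your alternative is to write down the resolvent $\Psi\bigl((f-\zeta)^{-1}\chi_{\sigma(B)}\bigr)$ explicitly, which is the most elementary of the three and needs only the bounded functional calculus and $E(\sigma(B))=I$.

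One wording fix in that argument: multiplicativity $\Psi(gh)=\Psi(g)\Psi(h)$ holds for all bounded measurable $g,h$, independently of where $E$ lives. What $E(\sigma(B))=I$ actually supplies is $\Psi\bigl(g(f-\zeta)\bigr)=\Psi(\chi_{\sigma(B)})=I$.
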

\begin{proof} (i): Boundedness of $f(B)$ is immediate. Normality follows from
 \cite[Corollary X.2.9]{DS}, since $f(B)^*=\overline{f}(B)$.

(ii): By  \cite[Theorem XII.2.9(b)]{DS},
$$
\sigma(f(B))=\bigcap_{\{\Delta: E(\Delta)=I\}} \overline{f(\Delta)}.
$$
Since $E(\sigma(B))=I$, we obtain $\sigma(f(B)) \subset \overline{f(\sigma(B))}$.

(iii): When $f$ is real valued, this is \cite[Theorem XII.2.9(c)]{DS}. We adapt its proof
to obtain the complex case. Define $E'(\Delta):=E(f^{-1}(\Delta)))$,
$\Delta \subset \mathbb C$ Borel; then $E'(\cdot)$ is a spectral measure on Borel sets of
$\mathbb C$. Let $f_n(\lambda)= f(\lambda)$ when $|f(\lambda)| \le n$ and zero otherwise.
Then change of variables yields
$$
\int_{\{\zeta \in\mathbb C: |\zeta| \le n\}} \zeta E'(d\zeta) =
\int_\mathbb C \zeta\,\mathbf{1}_{\{ |\zeta| \le n\}}(\zeta)  E'(d\zeta) =
$$
$$
\int_\mathbb R f(\lambda) \mathbf{1}_{\{|f(\lambda)| \le n\}}(\lambda) E(d\lambda)=
\int_\mathbb R f_n(\lambda) E(d\lambda)=f_n(B).
$$
Since $f$ is bounded, $f_n(B) \to f(B)$ (at least weakly) by the bounded convergence 
theorem, which yields $f(B)=\int_\mathbb C \zeta E'(d\zeta)$. By the uniqueness of 
the spectral measure of bounded normal operators, $E_f=E'$.
\end{proof}

In fact, for every measurable $f: \mathbb R \longrightarrow \mathbb C$, one can define
a densely defined closed operator $f(B)$ \cite[Definiton XII.2.5 and Theorem XII.2.6]{DS}
such that
$$
\langle f(B)x,y \rangle=\int_{\mathbb R} f(\lambda)\langle E(d\lambda)x,y \rangle ,
\qquad x \in\mathcal D(f(B)), \quad y \in H.
$$  
This is a special case of \cite[Theorem 13.24]{Ru}, so $f(B)=\Psi(f)$.
 
By \cite[Lemma XII.1.3]{DS} $\sigma(B)$ is closed, and by \cite[Theorem XII.2.9(b)]{DS} 
(with $f(t)=t$),  the support of $E(\cdot)$ is precisely $\sigma(B) \subset \mathbb R$, 
so $\Sigma(E)=\sigma(B)$. Hence, by Proposition \ref{measurable}, 
$\sigma(f(B)) \subset \overline{f(\sigma(B))}$, and Theorem \ref{continuous}
 yields the following "spectral mapping theorem".

\begin{prop} \label{unbounded}
Let $B$ be a self-adjoint operator with spectral measure $E(\cdot)$, and let
 $f: \mathbb R \longrightarrow \mathbb C$ be  continuous. Then
$\sigma(f(B))= \overline{f(\sigma(B))}$.
\end{prop}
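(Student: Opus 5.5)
The plan is to deduce the statement directly from Theorem \ref{continuous}, specialized to $\Gamma=\mathbb R$. Identify $\mathbb R$ with its dual via $\lambda\mapsto \e^{i\lambda t}$. The spectral measure $E(\cdot)$ of $B$ is then a regular projection-valued measure on the Borel sets of $\mathbb R$, and for measurable $f$ we have $f(B)=\Psi(f)$, as noted just before the statement. Regularity is automatic, since $\mathbb R$ is $\sigma$-compact and metrizable.

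First, I will identify the support $\Sigma(E)$ with $\sigma(B)$. By \cite[Theorem XII.2.9(b)]{DS} applied to $f(t)=t$, we get $\sigma(B)=\bigcap_{E(\Delta)=I}\overline{\Delta}$.

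\begin{itemize}
\item Since $E(\sigma(B))=I$ and $\sigma(B)$ is closed, the Definition of support gives $\Sigma(E)\subset\sigma(B)$.
\item Conversely, $\Delta=\Sigma(E)$ is closed with $E(\Sigma(E))=I$, because its complement is the maximal open null set. It therefore appears in the intersection, so $\sigma(B)\subset\Sigma(E)$.
\end{itemize}

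Second, I will apply Theorem \ref{continuous} to the continuous $f$. This gives $\sigma(\Psi(f))=\overline{f(\Sigma(E))}=\overline{f(\sigma(B))}$. Here $\sigma(\Psi(f))$ is understood, when $f$ is unbounded, as the spectrum of the closed densely defined operator. This is legitimate because the equality $\sigma(\Psi(f))=\text{E-range}(f)$ of \cite[Theorem 13.27(c)]{Ru} needs no boundedness hypothesis.

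For the reverse inclusion I will still check the argument directly, since this is the only place where continuity enters. Take $\zeta=f(\lambda_0)$ with $\lambda_0\in\sigma(B)$ and an open $O\ni\zeta$. The set $f^{-1}(O)$ is an open neighbourhood of $\lambda_0\in\Sigma(E)$, so $E(f^{-1}(O))\neq0$. Hence $\zeta$ lies in the E-range, and the E-range is closed.

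The main obstacle is bookkeeping rather than mathematics. Theorem \ref{continuous} is phrased for a spectral measure on an abstract dual group $\Gamma$, whereas here $E$ comes from Stone's theorem on $\mathbb R$. One must check that the Dunford--Schwartz functional calculus $f(B)$ coincides with Rudin's $\Psi(f)$, including domains for unbounded $f$. I will justify this by noting that both are the unique closed operator satisfying $\langle f(B)x,y\rangle=\int f\,d\langle E x,y\rangle$ on the domain $\{x:\int|f|^2\,d\langle Ex,x\rangle<\infty\}$.
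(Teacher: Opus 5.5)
Your proposal is correct and is essentially the paper's own argument. The paper identifies $\Sigma(E)=\sigma(B)$ via \cite[Theorem XII.2.9(b)]{DS} with $f(t)=t$, notes that $f(B)=\Psi(f)$, and then applies Theorem \ref{continuous}, with Proposition \ref{measurable} giving the easy inclusion. Your two-sided check of $\Sigma(E)=\sigma(B)$ and your check that the domains agree only fill in details the paper states in one line; the step $E(\Sigma(E))=I$ is valid because $\mathbb R$ is second countable, so the union of all open $E$-null sets is a countable union.
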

\medskip

Recall that the dual of $\mathbb R$ is $\mathbb R$, with $\lambda \in \mathbb R$ 
defining the character $\chi_{_\lambda}(t):=\e^{it\lambda}, \ t\in\mathbb R$.
\smallskip

The following result can be obtained from Proposition \ref{unbounded}, whose proof 
depends on \cite[Theorem 13.27(c)]{Ru} (see proof of Theorem \ref{continuous} and
\cite{MSE}); it is a special case of Corollary \ref{wsmt}, since $\Sigma(E)=\sigma(B)$. 
We give an independent proof.

\begin{prop} \label{spectrum}
Let $\{U(t)=\e^{itB}: t\in \mathbb R\}$ be a continuous unitary representation of
$\mathbb R$.  Then for every probability $\mu$ on $\mathbb R$,
$ \sigma\big(\int_{\mathbb R} U(t)d\mu(t) \big) = \overline{\hat\mu(\sigma(B))}$.
\end{prop}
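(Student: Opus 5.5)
Write $V:=\int_{\mathbb R}U(t)\,d\mu(t)$. Define $f:=\hat\mu$ restricted to $\mathbb R$, using the convention $\hat\mu(\lambda)=\int_{\mathbb R}\e^{it\lambda}\,d\mu(t)$, so that characters are $\chi_\lambda(t)=\e^{it\lambda}$. The plan is to avoid Corollary \ref{wsmt} and Theorem \ref{continuous} and instead use Lemma \ref{f(b)}, which relies only on Dunford--Schwartz. The argument mirrors the proof of Proposition \ref{unitary}.

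\textbf{Step 1: $V=f(B)$.} Since $\mu$ is a finite measure and $f$ is bounded and continuous on $\mathbb R$, we can compute, for $x,y\in H$, using \eqref{stone} and Fubini's theorem (applicable because $|\e^{it\lambda}|=1$ and the product measure $|\mu|\times|\langle E(\cdot)x,y\rangle|$ is finite):
$$
\langle Vx,y\rangle=\int_{\mathbb R}\int_{\mathbb R}\e^{it\lambda}\,\langle E(d\lambda)x,y\rangle\,d\mu(t)=\int_{\mathbb R}\hat\mu(\lambda)\,\langle E(d\lambda)x,y\rangle .
$$
Hence $V=f(B)$.

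\textbf{Step 2: $\sigma(V)\subset\overline{\hat\mu(\sigma(B))}$.} This is Lemma \ref{f(b)}(ii) applied to $f$.

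\textbf{Step 3: $\overline{\hat\mu(\sigma(B))}\subset\sigma(V)$.} Since $\sigma(V)$ is closed, it suffices to show $\hat\mu(\sigma(B))\subset\sigma(V)$. Suppose $\zeta\notin\sigma(V)$. Choose an open disk $\Delta\ni\zeta$ with $\Delta\cap\sigma(V)=\emptyset$. Since the spectral measure $E_f$ of the normal operator $V$ is supported on $\sigma(V)$, we get $E_f(\Delta)=0$. By Lemma \ref{f(b)}(iii), $E_f(\Delta)=E(f^{-1}(\Delta))$, so $E(f^{-1}(\Delta))=0$. Because $f$ is continuous, $f^{-1}(\Delta)$ is open in $\mathbb R$. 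The support of $E(\cdot)$ is exactly $\sigma(B)$, by \cite[Theorem XII.2.9(b)]{DS}, and an open set of $E$-measure zero cannot meet that support. Therefore $f^{-1}(\Delta)\cap\sigma(B)=\emptyset$, which gives $\zeta\neq\hat\mu(\lambda)$ for every $\lambda\in\sigma(B)$. This proves the inclusion.

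\textbf{Expected obstacle.} The delicate point is Step 3, namely that ``support of $E$'' means every open set meeting $\sigma(B)$ has nonzero $E$-measure. This must be justified for unbounded $B$. The plan is to argue directly: if $\lambda_0\in\sigma(B)$ had a neighbourhood $(\lambda_0-\epsilon,\lambda_0+\epsilon)$ of $E$-measure zero, then the function $g(\lambda)=(\lambda-\lambda_0)^{-1}$ is bounded by $1/\epsilon$ off that interval. So $g(B)$ is a bounded operator, and it inverts $B-\lambda_0$ by the multiplicativity of the functional calculus. That contradicts $\lambda_0\in\sigma(B)$.

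Unlike the case $\mathbb Z$, the closure in the statement cannot be dropped, since $\sigma(B)$ may be unbounded and then $\hat\mu(\sigma(B))$ need not be closed. The argument above handles this correctly: Step 2 already carries the closure, and Step 3 only needs to prove $\hat\mu(\sigma(B))\subset\sigma(V)$ and then take closures.
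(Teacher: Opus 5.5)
Your proof is correct and is essentially the paper's own argument: identify $V=f(B)$ via Fubini, get $\sigma(V)\subset\overline{\hat\mu(\sigma(B))}$ from Lemma \ref{f(b)}(ii), and obtain the reverse inclusion by rerunning the last paragraph of the proof of Proposition \ref{unitary} with Lemma \ref{f(b)}(iii), using that the support of $E(\cdot)$ is $\sigma(B)$. Your direct resolvent argument for $\sigma(B)\subset\Sigma(E)$ (taking $g:=0$ on the $E$-null interval) is a valid self-contained replacement for the paper's citation of Dunford--Schwartz, Theorem XII.2.9(b).
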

\begin{proof} By the representation \eqref{stone} and  the uniqueness of the spectral 
measure in the general Stone spectral  theorem \eqref{ambrose}, the spectral
measure of $B$ is that of \eqref{ambrose} for $U(t)$.

Fix a probability $\mu$ on $\mathbb R$; then  $V=\int_{\mathbb R} U(t)d\mu(t)$
 is a normal contraction.

For $\lambda \in \mathbb R$ define $f(\lambda):=\int_{\mathbb R} \e^{it\lambda}d\mu(t)$.
Then $f$ is bounded and continuous on $\mathbb R$. By definition of the bounded 
normal operator $f(B)$ in Lemma \ref{f(b)}(i) and Fubini's theorem, we obtain
$$
f(B)= \int_{\sigma(B)} f(\lambda)E(d\lambda)=
\int_{\sigma(B)} \Big[\int_{\mathbb R} \e^{it\lambda}d\mu(t) \Big] E(d\lambda)=
$$
$$
\int_{\mathbb R} \Big[\int_{\sigma(B)} \e^{it\lambda}E(d\lambda) \Big] d\mu(t)=
\int_{\mathbb R} U(t)d\mu(t):=V.
$$
By Lemma \ref{f(b)}(ii),
 $\sigma(V) =\sigma(f(B))  \subset \overline{f(\sigma(B))}= \overline{\hat\mu(\sigma(B))}$.

The proof that $f(\sigma(B)) \subset \sigma(V)$ is the same as the last paragraph of
the proof of Proposition \ref{unitary}, except that Lemma \ref{f(b)}(iii) is used instead
of \cite[Corollary X.2.10]{DS}. Since $\sigma(V)$ is compact, 
$\overline{f(\sigma(B))} \subset \sigma(V)$. Hence
$\sigma(V)=\overline{\hat\mu(\sigma(B))}$. 
\end{proof}

\begin{cor}
Let $\{U(t)=\e^{itB}: t\in \mathbb R\}$ be a continuous unitary representation of 
$\mathbb R$. If $\sigma(B)=\mathbb R$, then for every probability $\mu$ on $\mathbb R$,
$ \sigma\big(\int_{\mathbb R} U(t)d\mu(t) \big) = \overline{\hat\mu(\mathbb R)}$.
\end{cor}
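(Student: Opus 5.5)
This corollary is an immediate specialization of Proposition \ref{spectrum}. Fix a probability $\mu$ on $\mathbb R$ and put $V=\int_{\mathbb R}U(t)\,d\mu(t)$. Proposition \ref{spectrum} gives
$$
\sigma(V)=\overline{\hat\mu(\sigma(B))}
$$
with no assumption on $\sigma(B)$. The Fourier–Stieltjes transform is taken with respect to the characters $\chi_\lambda(t)=\e^{it\lambda}$, $\lambda\in\mathbb R$, which realize the identification of the dual of $\mathbb R$ with $\mathbb R$. Substituting $\sigma(B)=\mathbb R$ turns the right-hand side into $\overline{\hat\mu(\mathbb R)}$, which is the assertion.

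As a cross-check I would also run the argument through the general machinery. By \cite[Theorem XII.2.9(b)]{DS} the support of the spectral measure $E(\cdot)$ of $B$ is $\sigma(B)$, so $\Sigma(E)=\sigma(B)=\mathbb R=\Gamma$. Also, $E(\cdot)$ coincides with the spectral measure in \eqref{ambrose}, by the uniqueness argument used in the proof of Proposition \ref{spectrum}. Hence the hypothesis of Proposition \ref{sigma-V} holds: $E(O)\neq 0$ for every non-empty open $O\subset\mathbb R$. That proposition then yields $\sigma(V)=\overline{\hat\mu(\Gamma)}=\overline{\hat\mu(\mathbb R)}$ directly. Corollary \ref{wsmt} with $\nu=\mu$ gives the same conclusion.

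There is no real obstacle. The only point to watch is the identification of $\Gamma$ with $\mathbb R$ through $\lambda\mapsto\chi_\lambda$, so that $\hat\mu(\lambda)=\int_{\mathbb R}\e^{it\lambda}\,d\mu(t)$ agrees with the function $f$ used in the proof of Proposition \ref{spectrum}. I would state this identification explicitly in one line.
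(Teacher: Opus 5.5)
Your proposal is correct and matches the paper, which states this corollary without proof as an immediate consequence of Proposition \ref{spectrum}: substitute $\sigma(B)=\mathbb R$, using the identification $\lambda\mapsto\chi_\lambda$ so that the function $f$ there is $\hat\mu$. Your cross-check through Proposition \ref{sigma-V} (or Corollary \ref{wsmt}) is also valid, since $\Sigma(E)=\sigma(B)=\mathbb R=\Gamma$.
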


\bigskip

For any bounded complex measure $\nu$ on $\mathbb R$, the operator 
$U(\nu):=\int_{\mathbb R} U(t)d\nu(t)$ is well-defined. 

\begin{thm}
Let $\{U(t)=\e^{itB}: t\in \mathbb R\}$ be a continuous unitary representation of 
$\mathbb R$. Then the following are equivalent:

(i) $\sigma(B)=\mathbb R$.

(ii) For every  bounded complex measure $\nu$ on $\mathbb R$, 
$\ \sigma(U(\nu))= \overline{\hat\nu(\mathbb R)}$.

(iii) For every bounded complex measure $\nu\ne 0$ on $\mathbb R$, $\ U(\nu) \ne 0$.

(iv) For every absolutely continuous bounded complex measure $\nu\ne 0$ on $\mathbb R$,
$\ U(\nu) \ne 0$.
\end{thm}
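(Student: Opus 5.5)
The plan is to reduce this theorem to Proposition \ref{Sigma=Gamma} (equivalently Proposition \ref{SpU=Gamma}) and Corollary \ref{wsmt}, specialized to $G=\mathbb R$. The dictionary is the identification $\Gamma=\mathbb R$ via $\lambda\mapsto\chi_{_\lambda}$, $\chi_{_\lambda}(t)=\e^{it\lambda}$. Under it, the Stone representation \eqref{stone} is exactly \eqref{ambrose}. The spectral measure $E(\cdot)$ of $B$ therefore coincides with the spectral measure of the representation, as observed at the start of the proof of Proposition \ref{spectrum}. Its support satisfies $\Sigma(E)=\sigma(B)$, as noted before Proposition \ref{unbounded}. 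I will also check that the Fourier--Stieltjes transform matches: with the paper's convention, $\hat\nu(\chi_{_\lambda})=\int_{\mathbb R}\e^{it\lambda}d\nu(t)$, which is the function denoted $\hat\nu(\lambda)$ in the statement.

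(i) $\Rightarrow$ (ii): by Corollary \ref{wsmt}, $\sigma(U(\nu))=\overline{\hat\nu(\Sigma(E))}=\overline{\hat\nu(\sigma(B))}=\overline{\hat\nu(\mathbb R)}$. This extends Proposition \ref{spectrum} from probabilities to arbitrary bounded complex measures.

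(ii) $\Rightarrow$ (iii): let $\nu\neq0$. By the uniqueness theorem for Fourier--Stieltjes transforms \cite[1.7.3(b)]{Ru1}, $\hat\nu$ is not identically zero. Hence $\overline{\hat\nu(\mathbb R)}\ne\{0\}$, and by (ii) $\sigma(U(\nu))\neq\{0\}$. Since $\sigma(0)=\{0\}$, this forces $U(\nu)\neq0$. No normality argument is needed here.

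(iii) $\Rightarrow$ (iv) is trivial, since (iv) is the special case of (iii) for absolutely continuous $\nu$.

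(iv) $\Rightarrow$ (i): condition (iv) is exactly condition (iii) of Proposition \ref{Sigma=Gamma} for $G=\mathbb R$, with $m_G$ Lebesgue measure and $f=d\nu/dm_G$. That proposition gives $\Sigma(E)=\Gamma=\mathbb R$, i.e.\ $\sigma(B)=\mathbb R$. For a self-contained argument instead: if $\sigma(B)\ne\mathbb R$, choose an open interval $I$ disjoint from the closed set $\sigma(B)$. Then take a nonzero $f\in L_1(\mathbb R)$ whose Fourier transform is supported in $I$, for instance the inverse Fourier transform of a smooth bump function in $I$, suitably rescaled to match the sign convention. Its transform vanishes on $\sigma(B)$. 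By Fubini, as in the proof of Proposition \ref{spectrum}, $U(f)=\int_{\sigma(B)}\hat f(\lambda)E(d\lambda)=0$, which contradicts (iv).

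The only step that needs care is this last implication. There I must produce the $L_1$ function with prescribed Fourier support and correctly match the sign convention $\e^{it\lambda}$ against the paper's $\hat f$. Beyond that, the proof is a bookkeeping exercise on top of the general results of Section \ref{spectra}.
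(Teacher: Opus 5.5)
Your proof is correct. For (i)$\Rightarrow$(ii), (ii)$\Rightarrow$(iii) and (iii)$\Rightarrow$(iv) it is the paper's argument. Where the paper says ``adapt the proof of Proposition~\ref{spectrum}'' you cite Corollary~\ref{wsmt} directly, which amounts to the same thing since $\Sigma(E)=\sigma(B)$.

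The genuine difference is in (iv)$\Rightarrow$(i).
\begin{itemize}
\item The paper applies Proposition~\ref{SpU=Gamma} to get $Sp(\mathbf{U})=\mathbb R$, and then invokes Evans' external theorem $Sp(\mathbf{U})=\sigma(B)$. Proposition~\ref{SpU=Gamma} is itself Proposition~\ref{Sigma=Gamma} combined with Theorem~\ref{equality}. So the paper's chain is $\Sigma(E)=\mathbb R \Rightarrow Sp(\mathbf U)=\mathbb R \Rightarrow \sigma(B)=\mathbb R$.
\item You stop at $\Sigma(E)=\mathbb R$ and use $\Sigma(E)=\sigma(B)$. This is the Dunford--Schwartz identification of the support of the spectral measure of $B$, recorded before Proposition~\ref{unbounded}. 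Neither the Arveson spectrum nor Evans' theorem is needed.
\end{itemize}

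Your self-contained variant is also valid. You take a Schwartz function $f$ whose transform $\hat f$ is a smooth bump in a gap of $\sigma(B)$, so that $U(f)=\int_{\sigma(B)}\hat f\,dE=0$. This is the $\mathbb R$-case, made explicit, of the Rudin result used in Proposition~\ref{Sigma=Gamma}.

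The two routes buy different things. Yours is shorter and stays within the paper's stated program of using only spectral measures and classical Fourier analysis. The paper's route places the result in the Arveson-spectrum framework and exhibits $Sp(\mathbf U)=\sigma(B)$ for $\mathbb R$. Its cost is reliance on an outside theorem that is not logically necessary here, given $\Sigma(E)=\sigma(B)$ and Theorem~\ref{equality}.
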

\begin{proof} (i) implies (ii) by adapting the proof of Proposition \ref{spectrum}.

Assume (ii).  If $U(\nu)=0$, then by (ii) $\overline{\hat\nu(\mathbb R)}=\{0\}$, which
yields $\hat\nu(s)=0$ for every $s \in\mathbb R$. By the uniqueness theorem $\nu=0$. 
Thus (iii) holds, and obviously implies (iv).

Assume (iv). By Proposition \ref{SpU=Gamma}, the Arveson spectrum 
$Sp(\mathbf{U})$ is $ \mathbb R$. By the general result  of Evans \cite{Ev},
$Sp(\mathbf{U})=\sigma(B)$; hence (i) holds.
\end{proof}

{\bf Remarks.} 1. Since $\Sigma(E)=\sigma(B)$, \cite{Ev} yields $\Sigma(E)=Sp(\mathbf{U})$.

2. Foia\c s \cite[Corollaire 2]{Fo} proved that if $(U(t))_{t\in\mathbb R}$ is the 
unitary group induced on $L_2(\Omega,m)$ by an ergodic aperiodic measure preserving 
action $\{\theta(t) : t \in\mathbb R\}$, then the support of its spectral measure is 
$\mathbb R$; hence $\sigma(B)=\mathbb R$ (where $U(t)=\e^{itB}$). 
\bigskip

{\bf Acknowledgements.} The authors are grateful to Yuri Tomilov for useful comments
on Ritt operators, for pointing out \cite[Proposition 8.3.2]{Pe}, and for providing
 reference \cite{Ar2}. The second author thanks Eli Shamovich for discussions
 which led to Corollary \ref{shamo}.

\end{document}